\newtheorem{lem}{Lemma}[section]
\newtheorem{thm}[lem]{Theorem}
\newtheorem{rem}[lem]{Remark}
\newtheorem{prop}[lem]{Proposition}
\newtheorem{conj}[lem]{Conjecture}
\newtheorem{defn}[lem]{Definition}
\newcommand{\color}[2][1]{}
\newcommand{\dimo}[1]{\vspace{4pt}\noindent\textit{Proof of \ref{#1}}.\ }
\newcommand{\finedimo}{{\hfill\hbox{$\square$}\vspace{4pt}}}
\newcommand{\myprod}{\!\cdot\!}
\newcommand\matN{{\mathbb{N}}}
\newcommand\matS{{\mathbb{S}}}
\newcommand\matC{{\mathbb{C}}}
\newcommand\matE{{\mathbb{E}}}
\newcommand\matH{{\mathbb{H}}}
\newcommand{\myto}{\mathop{\longrightarrow}\limits}
\renewcommand{\hbar}{{\overline{h}}}
\newfont{\Got}{eufm10 scaled 1200}
\newcommand{\permu}{{\hbox{\Got S}}}
\newcommand{\mycap} [1] {\caption{\footnotesize{#1}}}
\newcommand{\chiorb}{\chi^{{\mathrm{orb}}}}
\newcommand{\Sigmatil}{\widetilde\Sigma}
\newcommand{\Xtil}{\widetilde X}
\newcommand{\dotsto}{\mathop{\dashrightarrow}\limits}
\newcommand{\argdotsto}[2]{\mathop{\dashrightarrow}\limits^{#1}_{\scriptscriptstyle{#2}}}
\newcommand{\argdotstobis}[2]{\mathop{\dashrightarrow\dashrightarrow}\limits^{#1}_{\scriptscriptstyle{#2}}}
\newcommand{\argdotstoter}[2]{\mathop{\dashrightarrow\dashrightarrow\dashrightarrow}\limits^{#1}_{\scriptscriptstyle{#2}}}
\newcommand{\argdotstoqua}[2]{\mathop{\dashrightarrow\dashrightarrow\dashrightarrow\dashrightarrow}\limits^{#1}_{\scriptscriptstyle{#2}}}
\newcommand{\argdotstosex}[2]{\mathop{\dashrightarrow\dashrightarrow\dashrightarrow\dashrightarrow\dashrightarrow
    \dashrightarrow}\limits^{#1}_{\scriptscriptstyle{#2}}}
\title{Branched covers of the sphere\\ and the prime-degree conjecture}
\author{Maria Antonietta~\textsc{Pascali}
\and\addtocounter{footnote}{5} Carlo~\textsc{Petronio}}
\begin{document}

\maketitle

\begin{abstract}
\noindent To a branched cover $\Sigmatil\to\Sigma$ between closed, connected and
orientable surfaces one associates a \emph{branch datum}, which
consists of $\Sigma$ and $\Sigmatil$, the total degree $d$, and
the partitions of $d$ given by the collections of local degrees over the branching points.
This datum must satisfy the Riemann-Hurwitz formula.
A \emph{candidate surface cover} is an abstract branch datum, a
priori not coming from a branched cover, but
satisfying the Riemann-Hurwitz formula.
The old \emph{Hurwitz problem} asks which candidate surface covers are realizable
by branched covers. It is now known that all candidate covers are realizable
when $\Sigma$ has positive genus, but not all are when $\Sigma$ is the $2$-sphere.
However a long-standing conjecture asserts that candidate covers with prime degree
are realizable. To a candidate surface cover one can associate one
$\Xtil\dotsto X$ between $2$-orbifolds, and in~\cite{PaPe} we have
completely analyzed the candidate surface covers such that either
$X$ is bad, spherical, or Euclidean, or both $X$ and $\Xtil$ are
rigid hyperbolic orbifolds, thus also providing strong supporting
evidence for the prime-degree conjecture. In this paper, using a variety
of different techniques, we continue this analysis,
carrying it out completely for the case where
$X$ is hyperbolic and rigid and $\Xtil$ has a $2$-dimensional Teichm\"uller space.
We find many more realizable and non-realizable candidate covers,
providing more support for the prime-degree conjecture.
\end{abstract}

\maketitle

\noindent
In this paper we push one step forward the approach via geometric $2$-orbifolds
developed and first exploited in~\cite{PaPe} to face the
Hurwitz existence problem for branched covers
between surfaces. See~\cite{Hurwitz} for the original
source concerning this problem, the
classical~\cite{Husemoller,Thom,Singerman,Endler,Francis,Ezell,Mednykh1,EKS,Gersten,Mednykh2,KZ},
the more
recent~\cite{Baranski,MSS,OkPand,PaPe,PePe1,PePe2,Pako,Zheng}, and below.
In~\cite{PaPe} we have determined the realizability of all candidate surface branched covers
having associated candidate cover between $2$-orbifolds with non-negative Euler
characteristic or between rigid hyperbolic $2$-orbifolds. These results provided
in particular strong support for the
long-standing conjecture~\cite{EKS} that a candidate cover with
prime total degree is always realizable.
In this paper we consider the case where in the associated
candidate cover between $2$-orbifolds the covered $2$-orbifold is hyperbolic and rigid
while the covering $2$-orbifold has a $2$-dimensional Teichm\"uller space.
As a result we exhibit many new realizable and non-realizable candidate surface branched covers,
finding a confirmation of the validity of the prime-degree conjecture for the
cases under consideration.
To discuss realizability we use a variety of techniques developed over the time by several
authors, and some new ones.
Some of the results proved in this paper are also contained, with minor variations, in the PhD thesis~\cite{tesi}
of the first named author (University of Rome I, 2010).

The paper is organized as follows. In Section~\ref{known:pre-orb:sec}
we precisely state the Hurwitz problem, fixing the notation we employ to treat it,
and we outline the classical results that motivate us to restrict our attention to
one specific instance of the problem.
In Section~\ref{known:orb:sec} we discuss how geometric $2$-orbifolds relate to the problem
and allow to split it into some more specific subproblems, stating the results of~\cite{PaPe}
where the easiest of these subproblems were solved.
In Section~\ref{new:sec} we describe the next easiest subproblem of the Hurwitz existence problem,
which is treated in this paper, and
we state the corresponding solution. This subproblem itself splits into
two different cases, for each of which one needs accomplish two tasks, namely to enumerate
the relevant candidate covers and to discuss their realizability.
The required enumeration processes for the two cases are carried
out in Section~\ref{enum:sec}.
Next, Section~\ref{techniques:sec} contains an overview of the different techniques
later used to discuss the realizability of the candidate covers found.
The discussion itself, again separately for the two relevant cases,
is finally carried out in Section~\ref{real:ex:sec}.

\section{(Candidate) surface branched covers,\\ and some known results}
\label{known:pre-orb:sec}

In this section we state the Hurwitz existence problem using the same
language and notation as in~\cite{PaPe}, and we very briefly review some by now classical results.

\paragraph{Branched covers}
Let $\Sigmatil$ and $\Sigma$ be closed, connected, and orientable surfaces, and
$f:\Sigmatil\to\Sigma$ be a branched cover, \emph{i.e.}, a map
locally modelled on functions of the form $(\matC,0)\myto^{z\mapsto z^k}(\matC,0)$
with $k\geqslant 1$. If $k>1$ then $0$ in the target $\matC$ is  a
branching point, and $k$ is the local degree at $0$ in the
source $\matC$. There is a finite number $n$ of branching points,
and, removing all of them from $\Sigma$ and their preimages
from $\Sigmatil$, we see that $f$ induces a genuine cover
of some degree $d$. The collection $(d_{ij})_{j=1}^{m_i}$ of
the local degrees at the preimages of the $i$-th branching point is a
partition $\Pi_i$ of $d$.
We now define:
\begin{itemize}
\item $\ell(\Pi_i)$ to be the length $m_i$ of $\Pi_i$;
\item $\Pi$ as the set $\{\Pi_1,\ldots,\Pi_n\}$ of all partitions of $d$ associated to $f$;
\item $\ell(\Pi)$ to be the total length $\ell(\Pi_1)+\ldots+\ell(\Pi_n)$ of $\Pi$.
\end{itemize}
Then multiplicativity of the Euler characteristic $\chi$ under
genuine covers for surfaces with boundary implies
the classical Riemann-Hurwitz formula
\begin{equation}\label{RH:general:eq}
\chi(\Sigmatil)-\ell(\Pi)=d\myprod\big(\chi(\Sigma)-n\big).
\end{equation}

\paragraph{Candidate branched covers and the realizability problem}
Consider again two closed, connected, and orientable surfaces $\Sigmatil$ and $\Sigma$,
integers $d\geqslant 2$ and $n\geqslant 1$, and a set of partitions
$\Pi=\{\Pi_1,\ldots,\Pi_n\}$ of $d$, with $\Pi_i=(d_{ij})_{j=1}^{m_i}$, such
that condition~(\ref{RH:general:eq}) is satisfied. We associate to these data
the symbol
$$\Sigmatil\argdotstosex{d:1}{(d_{11},\ldots,d_{1m_1}),\ldots,(d_{n1},\ldots,d_{nm_n})}\Sigma$$
that we will call a \emph{candidate surface branched cover}. A classical (and
still not completely solved) problem, known as the \emph{Hurwitz existence problem},
asks which candidate surface branched covers are actually \emph{realizable},
namely induced by some existent branched cover $f:\Sigmatil\to\Sigma$.
A non-realizable candidate surface branched cover will be called \emph{exceptional}.

Over the last 50 years the Hurwitz existence problem was the object of a wealth of papers,
many of which were listed above. The combined efforts of several
mathematicians led in particular to the following
results~\cite{Husemoller,EKS}:
\begin{itemize}
\item If $\chi(\Sigma)\leqslant 0$ then any candidate surface branched cover is realizable,
\emph{i.e.}, the Hurwitz existence problem has a positive solution in this case;
\item If $\chi(\Sigma)>0$, \emph{i.e.}, if $\Sigma$ is the $2$-sphere $S$, there exist
exceptional candidate surface branched covers.
\end{itemize}

\begin{rem}
\emph{A version of the Hurwitz existence problem exists also for
possibly non-orientable $\Sigmatil$ and $\Sigma$. Condition~(\ref{RH:general:eq})
must be complemented in this case with a few more requirements
(some obvious, and one slightly less obvious, see~\cite{PePe1}).
However it has been shown~\cite{Ezell,EKS}
that again this generalized problem always has a positive solution if $\chi(\Sigma)\leqslant 0$,
and that the case where $\Sigma$ is the projective plane reduces to the case
where $\Sigma$ is the $2$-sphere $S$.}
\end{rem}

According to the two facts stated, in order to face the Hurwitz existence problem,
it is not restrictive to \emph{assume the candidate covered surface $\Sigma$ is the $2$-sphere $S$},
which we will do henceforth.
Considerable energy has been devoted over the time to a general understanding of
the exceptional candidate surface branched covers in this case, and
quite some progress has been made (see for instance the survey of known results
contained in~\cite{PePe1}, together with the later papers~\cite{PePe2,Pako,Zheng}),
but the global pattern remains elusive. In particular the following conjecture
proposed in~\cite{EKS} appears to be still open:

\begin{conj}\label{prime:conj}
If $\Sigmatil\argdotsto{d:1}{\Pi}S$ is a candidate surface branched cover and
the degree $d$ is a prime number then the candidate is realizable.
\end{conj}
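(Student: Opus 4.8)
The plan is to pass from the topological realizability question to its equivalent algebraic form and then to exploit the exceptional rigidity of permutation groups of prime degree. First I would recall the standard monodromy reformulation: a candidate cover $\Sigmatil\argdotsto{d:1}{\Pi}S$ with branch data $\Pi=\{\Pi_1,\dots,\Pi_n\}$ is realizable precisely when there exist permutations $\sigma_1,\dots,\sigma_n$ of the $d$ sheets, with $\sigma_i$ of cycle type $\Pi_i$, satisfying $\sigma_1\cdots\sigma_n=\mathrm{id}$ and generating a transitive subgroup $G$ of the symmetric group. The Riemann--Hurwitz identity~(\ref{RH:general:eq}), with $\chi(S)=2$, is exactly the numerical compatibility $\sum_i\big(d-\ell(\Pi_i)\big)=2d-2+2\widetilde g$ that such a tuple automatically forces on the genus $\widetilde g$ of $\Sigmatil$; hence the entire content of the conjecture is the existence of a \emph{transitive} product-one tuple of the prescribed cycle types. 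The case $n=2$ is immediate, since $\sigma_2=\sigma_1^{-1}$ and transitivity forces both partitions to be the trivial one $(d)$, i.e.\ the cyclic cover.

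The decisive feature of the prime case is that when $d=p$ is prime every transitive $G$ is in fact primitive (a nontrivial block system would have blocks of size dividing $p$), so by the classical theorem of Burnside $G$ is either contained in the affine group $\mathrm{AGL}(1,p)$ --- a Frobenius group in which each nontrivial element is either a $p$-cycle or fixes one point and permutes the others in $(p-1)/k$ cycles of a common length $k\mid p-1$ --- or else $G$ is $2$-transitive, in which case the classification of finite simple groups pins it down to a short explicit list ($A_p$, $S_p$, certain projective groups, and a handful of sporadic examples). My strategy is then to \emph{count, and prove positivity}: by the Frobenius class-algebra formula the number of tuples $(\sigma_1,\dots,\sigma_n)$ with $\sigma_i$ in the conjugacy class $C_i$ and $\sigma_1\cdots\sigma_n=\mathrm{id}$ equals
\[
\frac{|C_1|\cdots|C_n|}{|G|}\sum_{\chi}\frac{\chi(C_1)\cdots\chi(C_n)}{\chi(1)^{\,n-2}},
\]
the sum running over the irreducible characters. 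The aim is to show that the count inside the full symmetric group (or the alternating group, when the signs $\prod_i\mathrm{sgn}(\sigma_i)=1$ and one seeks a realization there) strictly exceeds the combined count of the tuples that fail to be transitive, namely those trapped in one of the proper primitive subgroups supplied by Burnside's dichotomy.

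Carrying this out, I would first dispose of the solvable stratum by direct construction: inside a transitive $G\leq\mathrm{AGL}(1,p)$, which is $C_p\rtimes C_m$ with $m\mid p-1$, the admissible cycle types are so restricted that the finitely many Riemann--Hurwitz-compatible data can be written down explicitly, or re-realized with larger monodromy group $A_p$ or $S_p$, the regime in which the character formula has a dominant main term $|C_1|\cdots|C_n|/|G|$. For the $2$-transitive stratum I would isolate the leading contributions --- the trivial character and the standard reflection character of dimension $p-1$ --- and bound the remaining ratios $|\chi(C_i)|/\chi(1)$, which shrink as the number $n$ of branch points grows; the main term is then manifestly positive for all but the smallest $n$, while the finitely many projective and sporadic $2$-transitive groups (for each relevant $p$) are checked and subtracted one by one.

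The main obstacle, and the reason the conjecture remains open, is precisely the three-branch-point case $n=3$: here the character sum has no room for the growing-$n$ smallness estimate, severe cancellation can drive the main term to zero, and the delicate competition between the symmetric-group count and the subgroup contributions must be controlled \emph{exactly} rather than asymptotically. This is exactly where the geometric viewpoint of the present paper becomes indispensable --- the three partitions $\Pi_1,\Pi_2,\Pi_3$ determine a triangle base orbifold $X$, and its geometry together with the Teichm\"uller theory of the covering orbifold $\Xtil$ supplies the finer constructive and obstructive tools unavailable to pure character theory. A complete proof would therefore require pushing these orbifold techniques through every hyperbolic triangle datum; the present paper settles a further such stratum, namely that in which $X$ is hyperbolic and rigid and $\Xtil$ has a $2$-dimensional Teichm\"uller space.
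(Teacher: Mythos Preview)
The statement you are attempting to prove is Conjecture~\ref{prime:conj}, and the paper does \emph{not} prove it: the paper explicitly says that this conjecture ``appears to be still open,'' and the entire purpose of the paper is to provide further supporting evidence for it by settling one more stratum (the subproblem $\matH(1)$) of the Hurwitz existence problem. There is therefore no ``paper's own proof'' against which to compare your proposal.

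Your proposal is also not a proof, and you essentially acknowledge this yourself. The character-sum/Burnside strategy you outline is a reasonable heuristic programme, but it does not close: you correctly identify that for $n=3$ the dominant-term argument has no room to work, that cancellation can kill the main term, and that the subgroup subtraction must be controlled exactly rather than asymptotically. That is precisely the gap. Your final paragraph then defers to the orbifold techniques of the paper and concedes that ``a complete proof would therefore require pushing these orbifold techniques through every hyperbolic triangle datum'' --- which is a restatement of the fact that the conjecture is open, not a resolution of it. Note also that Proposition~\ref{n=3:enough:prop} already reduces the full conjecture to the $n=3$ case, so the entire content lies in exactly the regime where your analytic estimate breaks down; the ``easy'' part of your strategy (large $n$) is therefore not contributing anything toward the conjecture beyond what is already known.

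In short: there is no proof in the paper to match, and your proposal is a strategy sketch that correctly locates, but does not overcome, the known obstruction.
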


The following fact, again established in~\cite{EKS}, will serve to us as a motivation:

\begin{prop}\label{n=3:enough:prop}
If Conjecture~\emph{\ref{prime:conj}} is true for candidate surface branched covers having $n=3$ branching points,
then it is true in general.
\end{prop}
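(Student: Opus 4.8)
The plan is to show that if a candidate surface branched cover $\Sigmatil\dashrightarrow S$ of prime degree $d$ with $n\geqslant 4$ branching points were exceptional, then one could produce from it an exceptional candidate cover of the same prime degree $d$ but with fewer branching points; iterating, one would eventually reach an exceptional candidate with $n=3$, contradicting the hypothesis. So the core of the argument is a \emph{reduction step} that merges two branching points into one while preserving both the Riemann--Hurwitz constraint and the property of being non-realizable.

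First I would set up the merging operation. Given partitions $\Pi_1,\dots,\Pi_n$ of $d$ at the $n$ branching points, pick two of them, say $\Pi_{n-1}$ and $\Pi_n$, and replace them by a single partition $\Pi'$ of $d$ obtained as a \emph{common refinement}: concretely, $\Pi'$ should be a partition that is simultaneously refined by neither — rather, one chooses $\Pi'$ so that both $\Pi_{n-1}$ and $\Pi_n$ refine $\Pi'$ (equivalently, $\Pi'$ has parts that group together parts of each). The key numerical point is to track $\ell$: one verifies that $\ell(\Pi')\geqslant \ell(\Pi_{n-1})+\ell(\Pi_n)-d$, and that equality can be arranged, which is exactly what is needed so that the new datum $\Sigmatil'\dashrightarrow S$ with branching points $\Pi_1,\dots,\Pi_{n-2},\Pi'$ still satisfies~(\ref{RH:general:eq}) for an appropriate $\Sigmatil'$ (the genus of $\Sigmatil'$ is forced by the formula). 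One must check this forced genus is a non-negative integer; here primality of $d$, or at least a careful choice of which parts to merge, is what makes the bookkeeping work.

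Next comes the realizability bridge, which is the heart of the matter: one shows that if the \emph{reduced} candidate $\Sigmatil'\dashrightarrow S$ is realizable by an honest branched cover $f'$, then the \emph{original} candidate $\Sigmatil\dashrightarrow S$ is realizable too. The idea is the standard "splitting a branch point" move: given $f'$ realizing $\Pi'$ at the merged point, one deforms $f'$ near that point, pulling the single branch point apart into two nearby branch points and correspondingly refining the local monodromy so that the two new points carry the partitions $\Pi_{n-1}$ and $\Pi_n$. Equivalently, in the language of monodromy representations $\rho:\pi_1(S\setminus\{\text{branch pts}\})\to \permu_d$, one factors the permutation assigned to the merged loop as a product of two permutations with the prescribed cycle types — this is always possible by the combinatorial fact that a permutation of cycle type $\Pi'$ can be written as a product of two permutations of cycle types $\Pi_{n-1}$ and $\Pi_n$ precisely when $\ell(\Pi')+d \geqslant \ell(\Pi_{n-1})+\ell(\Pi_n)$ and a parity/connectivity condition holds, and transitivity of the new representation is inherited. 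Contrapositively: exceptional original $\Rightarrow$ exceptional reduced.

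The main obstacle is ensuring the reduced datum one produces is again a \emph{bona fide} candidate of the \emph{same prime degree} — the degree $d$ is preserved automatically by construction, but one must be careful that the merging can always be performed without violating the length bound $\ell(\Pi_i)\le d$, that the resulting $\Sigmatil'$ has non-negative genus, and that $n$ strictly decreases so the induction terminates at $n=3$ (one cannot reduce below $3$ since with $n\le 2$ every candidate over $S$ is realizable, so the induction is vacuous there). Verifying the permutation-factorization lemma in the borderline cases, and checking that transitivity is not lost when one refines the monodromy, are the delicate points; everything else is the routine Riemann--Hurwitz arithmetic, which I would not grind through here. \finedimo
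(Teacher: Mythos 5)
Your overall architecture does match the classical reduction that the paper itself does not reprove but quotes from~\cite{EKS}: induct on $n$, merge two branching points into one, realize the reduced candidate, then split the merged point by factoring its monodromy, with transitivity inherited because $\langle\sigma_1,\ldots,\sigma_{n-2},\sigma_{n-1},\sigma_n\rangle\supseteq\langle\sigma_1,\ldots,\sigma_{n-2},\sigma_{n-1}\sigma_n\rangle$. The genuine gap is the ``combinatorial fact'' on which your splitting step rests. It is false that a permutation of cycle type $\Pi'$ is a product of permutations of types $\Pi_{n-1},\Pi_n$ whenever $\ell(\Pi')+d\geqslant\ell(\Pi_{n-1})+\ell(\Pi_n)$ together with a parity/connectivity condition: in $\permu_4$ a $3$-cycle is never a product of two permutations of type $(2,2)$ (all such products lie in the Klein four-group), even though $\ell(\Pi')+d=6\geqslant 4$, all classes involved are even, and the group generated by the two factors is even transitive. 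This example is exactly the combinatorial core of the classical exceptional candidate $S\argdotstoter{4:1}{(2,2),(2,2),(3,1)}S$; deciding which cycle types arise as products of two prescribed types is essentially the $n=3$ Hurwitz problem itself, so postulating a clean numerical criterion for it begs the question. Your structural prescription for $\Pi'$ is also off target: the cycle type of a product need not be a common coarsening of the types of the factors (a product of a $(3,1)$-type and a $(2,2)$-type element of $\permu_4$ can again have type $(3,1)$, which is not a coarsening of $(2,2)$), and refinement is not what governs factorability in any case.

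The step can be repaired, and this is presumably close to what is done in~\cite{EKS}: do not try to characterize factorability, but instead define $\Pi'$ \emph{a posteriori} as the cycle type of an explicit product $\alpha\beta$ with $\alpha,\beta$ in the prescribed classes, choosing the pair of partitions to merge and the product so that the reduced datum is a bona fide candidate; here the subadditivity bound $\ell(\alpha\beta)\geqslant\ell(\alpha)+\ell(\beta)-d$ that you quote is the relevant constraint, evenness of the new Euler characteristic is automatic from multiplicativity of signs, and one needs both an existence lemma for products with the minimal (or an admissible) number of cycles and a pigeonhole choice of the pair --- for instance, when $\Sigmatil=S$ and $n\geqslant4$ the two longest partitions already satisfy $\ell(\Pi_{n-1})+\ell(\Pi_n)\geqslant d+1$, so one can merge them keeping the same covering surface. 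Then, if the reduced candidate is realizable, conjugate its monodromy so that the permutation at the merged point is exactly $\alpha\beta$ and replace it by the pair $(\alpha,\beta)$: cycle types, the product relation, transitivity and the Euler characteristic all come out right, so the original candidate is realized; note that only \emph{some} factorization within the conjugacy classes is ever needed, not a factorization criterion. These existence lemmas are the actual content of the reduction and are missing from your sketch. Finally, your localization of primality is wrong: the degree is untouched by the merge, so primality plays no role in the genus bookkeeping; it enters only in that the reduced candidate still has prime degree, so the inductive hypothesis (ultimately the $n=3$ case of the conjecture) applies to it.
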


We conclude this section by mentioning that
all exceptional candidate surface branched covers
with $n=3$ and $d\leqslant 20$
were determined by computer in~\cite{Zheng}. There are very many of them, but
none occurs for prime $d$.

\section{Surface covers vs. $2$-orbifold covers,\\ and more known results}
\label{known:orb:sec}

A 2-orbifold $X=\Sigma(p_1,\ldots,p_n)$ is a closed orientable surface $\Sigma$
with $n$ cone points of orders $p_i\geqslant 2$, at which $X$ has
a singular differentiable structure given by the quotient
$\matC/_{\langle{\rm rot}(2\pi/p_i)\rangle}$, where ${\rm rot}(\vartheta):z\mapsto e^{i\vartheta}\cdot z$.

\paragraph{Geometric $2$-orbifolds}
W.~Thurston~\cite{thurston:notes} introduced the notion of orbifold Euler characteristic
$$\chiorb\big(\Sigma(p_1,\ldots,p_n)\big)=\chi(\Sigma)-\sum_{i=1}^n\left(1-\frac1{p_i}\right),$$
and showed that:
\begin{itemize}
\item If $\chiorb(X)>0$ then $X$ is either \emph{bad} (not covered by a surface in the sense of
orbifolds, see below) or
\emph{spherical}, namely the quotient of the metric 2-sphere $\matS^2$ under a finite isometric action;
\item If $\chiorb(X)=0$ (respectively, $\chiorb(X)<0$) then $X$ is
\emph{Euclidean} (respectively, \emph{hyperbolic}), namely
the quotient of the Euclidean plane $\matE^2$ (respectively, the hyperbolic plane $\matH^2$)
under a discrete isometric action.
\end{itemize}

In addition, Thurston proved that, for a hyperbolic $X$ with $n$ cone points and
underlying surface of genus $g$, the Teichm\"uller space
$\tau(X)$, namely the space of hyperbolic structures on $X$ up to
isometries isotopic to the identity, has real dimension
$6(g-1)+2n$.

\paragraph{Orbifold covers}
Again following Thurston~\cite{thurston:notes} we call
\emph{degree-$d$ orbifold cover} a map $f:\Xtil\to X$ between 2-orbifolds
such that $f^{-1}(x)$ generically consists of $d$ points and
locally making a diagram of the following form commutative:
$$\begin{array}{ccc}
(\matC,0) & \myto^{\rm id} & (\matC,0) \\
\downarrow & & \downarrow \\
(\Xtil,\widetilde{x}) & \myto^{f} & (X,x)
\end{array}$$
where $\widetilde{x}$ and $x$ have cone orders $\widetilde{p}$
and $p=k\myprod\widetilde{p}$ respectively, and the vertical arrows are the
projections corresponding to the actions of
$\langle{\rm rot}(2\pi/\widetilde{p})\rangle$ and $\langle{\rm rot}(2\pi/p)\rangle$,
namely the maps defining the (possibly singular) local differentiable structures at
$\widetilde{x}$ and $x$. Since this local model can be described by the map $z\mapsto z^k$,
we see that $f$ induces a branched cover between the underlying surfaces of $\Xtil$ and $X$.
Using the orbifold language one can then state the Riemann-Hurwitz formula~(\ref{RH:general:eq})
in the following equivalent fashion:
\begin{equation}\label{RH:orb:eq}
\chiorb(\Xtil)=d\myprod\chiorb(X).
\end{equation}

\paragraph{From surface to $2$-orbifold candidate covers}
As one easily sees, distinct orbifold covers can induce the same surface
branched cover (in the local model, the two cone orders can be multiplied by one and the
same integer). However, as pointed out in~\cite{PePe1},
a surface branched cover has an ``easiest'' associated
orbifold cover, \emph{i.e.}, that with the smallest possible cone orders. This carries over
to \emph{candidate} covers, as we will now spell out.
Consider a candidate surface branched cover
$$\Sigmatil\argdotstosex{d:1}{(d_{11},\ldots,d_{1m_1}),\ldots,(d_{n1},\ldots,d_{nm_n})}\Sigma$$
and define
$$\begin{array}{ll}
p_i={\rm l.c.m.}\{d_{ij}:\ j=1,\ldots,m_i\},\quad & p_{ij}=p_i/d_{ij},\phantom{\Big|} \\
X=\Sigma(p_1,\ldots,p_n), & \Xtil=\Sigmatil\big((p_{ij})_{i=1,\ldots,n}^{j=1,\ldots,m_i}\big)
\end{array}$$
where ``l.c.m.'' stands for ``least common multiple.''
Then we have a \emph{preferred associated candidate $2$-orbifold cover} $\Xtil\dotsto^{d:1}X$
satisfying $\chiorb(\Xtil)=d\cdot \chiorb(X)$. Note that the original
candidate surface branched cover cannot be reconstructed from $\Xtil,X,d$ alone,
but it can if $\Xtil\dotsto^{d:1}X$ is complemented with the \emph{covering instructions}
$$(p_{11},\ldots,p_{1m_1})\dotsto p_1,\qquad\ldots\qquad
(p_{n1},\ldots,p_{nm_n})\dotsto p_n$$
that one can include in the symbol $\Xtil\dotsto^{d:1}X$ itself,
omitting the $p_{ij}$'s equal to $1$. Of course
a candidate surface branched cover is realizable if and only if the
associated candidate 2-orbifold cover with appropriate covering instructions is realizable.

\paragraph{Splitting the Hurwitz problem according to orbifold geometry}
We have just shown that to each candidate surface branched cover one can
attach a preferred candidate orbifold cover $\Xtil\dotsto X$. Following Thurston's
geometric picture of $2$-orbifolds one can then split
the Hurwitz existence problem by restricting to the analysis of those
candidate surface branched covers for which in the associated
$\Xtil\dotsto X$ the geometry of $X$ and $\Xtil$ is prescribed, and
in the hyperbolic case the dimension of $\tau(X)$ and $\tau(\Xtil)$ also is.
Note that a candidate orbifold cover $\Xtil\dotsto X$ by definition
satisfies the orbifold version~(\ref{RH:orb:eq}) of the
Riemann-Hurwitz formula; therefore, $X$ and $\Xtil$ have the same geometry,
except possibly when one of them is bad and the other one is spherical.

Having in mind to attack Conjecture~\ref{prime:conj} and taking into account
Proposition~\ref{n=3:enough:prop}, one can actually restrict to the case where
\begin{itemize}
\item $X$ is a triangular orbifold $S(p,q,r)$, whence geometrically rigid,
\end{itemize}
and split the Hurwitz existence problem as described in the following table.
\begin{center}
\begin{tabular}{c|c}
\textbf{Subproblem} & \textbf{Description}\\ \hline\hline
bad/$\matS$ & $\phantom{\Big|}X$ is bad or spherical, namely $\chiorb(X)>0$\\ \hline
$\matE$ & $\phantom{\Big|}X$ is Euclidean, namely $\chiorb(X)=0$\\ \hline
$\matH(j)$ for $j\in\matN$ & $\phantom{\Big|}X$ is hyperbolic and $\dim(\tau(\Xtil))=2j$
\end{tabular}
\end{center}

\paragraph{Known result}
In~\cite{PaPe} we have completely solved the subproblems of the Hurwitz existence problem
described above as bad/$\matS$, as $\matE$, and as $\matH(0)$,
finding the results described in the following table.

\begin{center}
\begin{tabular}{c|c}
\textbf{Subproblem} & \textbf{Findings}\\ \hline\hline
bad/$\matS$ &
    \begin{minipage}{10cm}
    \ \\
    $20$ realizable isolated candidate covers, two infinite families
    of realizable candidate covers, 11 exceptional isolated candidate covers, and one infinite family
    of exceptional candidate covers\\
    \
    \end{minipage} \\ \hline
$\matE$ &
    \begin{minipage}{10cm}
    \ \\
    $14$ infinite families of realizable candidate covers, two infinite families of
    exceptional candidate covers, and $12$ infinite families of candidate covers for which the realizability
    was shown to be equivalent to an arithmetic condition on the degree\\
    \
    \end{minipage} \\ \hline
$\matH(0)$ &
    \begin{minipage}{10cm}
    \ \\
    $9$ realizable isolated candidate covers and two isolated exceptional candidate covers\\
    \
    \end{minipage}
\end{tabular}
\end{center}

We mention that the arithmetic conditions on the degree $d$ in the solution of subproblem $\matE$
are given by congruences and/or by the fact that $d$ belongs to the image of some
quadratic form $\matN\times\matN\to\matN$.
Analyzing these conditions,
for three of the families in the solution of subproblem $\matE$
we were able to show that the candidate cover is ``exceptional with probability 1,''
even though it is realizable when its degree is prime, which we view as a strong supporting evidence for
Conjecture~\ref{prime:conj}.

\section{New results}
\label{new:sec}

We describe here the new contribution offered by the present paper, namely
the solution of subproblem $\matH(1)$ of the Hurwitz existence problem.
This consists of the following steps:
\begin{itemize}
\item Enumeration of all the candidate surface branched covers having an associated
candidate orbifold
cover of the form $\Xtil\dotsto X$ where $X=S(p,q,r)$ is a rigid hyperbolic
$2$-orbifold, namely such that $\frac1p+\frac1q+\frac1r<1$,
and $\Xtil$ is hyperbolic with $\textrm{dim}(\tau(\Xtil))=2$,
namely $\Xtil=S(\alpha,\beta,\gamma,\delta)$ with $(\alpha,\beta,\gamma,\delta)\neq (2,2,2,2)$,
or $\Xtil=T(\alpha)$ with $\alpha>1$, where $T$ is the torus;
\item Discussion of realizability or exceptionality of all the candidate surface covers enumerated.
\end{itemize}
The next statements summarize our results.
Before giving them we underline that
all the exceptional candidates we have found occur for composite
degree, which means that our results provide further supporting
evidence for Conjecture~\ref{prime:conj}.
We also mention that the tables of the next pages contain a numbering of all the
candidate covers we have found; these numbers will be referred to throughout the paper.

\begin{thm}\label{hyp:S(4)-to-S(3):summary:thm}
There exist precisely $146$ candidate surface branched covers
$\Sigmatil\argdotstoter{d:1}{(\Pi_1,\Pi_2,\Pi_3)}S$ for which
in the associated candidate orbifold cover
$\Xtil\dotsto X$ one has that $X$ and $\Xtil$ are hyperbolic of the form
$X=S(p,q,r)$ and $\Xtil=S(\alpha,\beta,\gamma,\delta)$.
Precisely $29$ of these candidate surface covers are exceptional, and the other
$117$ are realizable. The complete description of these candidate covers, including
the associated candidate orbifold covers and information on their realizability,
is contained in Tables~\ref{hyp:S(4)-to-S(3):summary:1-12:tab} to~\ref{hyp:S(4)-to-S(3):summary:117-146:tab}.
\end{thm}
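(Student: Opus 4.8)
The plan is to split the proof into the two tasks identified in Section~\ref{new:sec}: first enumerate all candidate surface branched covers whose preferred associated orbifold cover has the shape $\Xtil=S(\alpha,\beta,\gamma,\delta)\dotsto S(p,q,r)=X$ with both orbifolds hyperbolic, and then decide realizability for each. For the enumeration I would start from the orbifold Riemann–Hurwitz relation $\chiorb(\Xtil)=d\myprod\chiorb(X)$, which here reads
\begin{equation*}
2-\sum_{i=1}^4\Big(1-\tfrac1{\alpha_i}\Big)=d\Big(-1+\tfrac1p+\tfrac1q+\tfrac1r\Big),
\end{equation*}
together with the constraint $\Sigmatil=\Sigma=S$ forced by the covering instructions (each cone point of $X$ of order $p_i$ is covered with local data a partition of $d$ whose entries $d_{ij}$ satisfy $p_i=\mathrm{l.c.m.}\{d_{ij}\}$ and $p_{ij}=p_i/d_{ij}$, and exactly four of the $p_{ij}$ across all three fibres exceed $1$). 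Since $X$ is rigid hyperbolic, $\tfrac1p+\tfrac1q+\tfrac1r<1$ bounds $(p,q,r)$ away from $(2,2,2)$-type degeneracies but still leaves infinitely many triples a priori; the key finiteness input is that $\chiorb(\Xtil)$ is bounded below (it is $\ge -1$ for a four-cone-point sphere), so $d\cdot|\chiorb(X)|\le 1$, and $|\chiorb(S(p,q,r))|$ is bounded below by an explicit positive constant unless $(p,q,r)$ is one of finitely many "small" triples; this caps $d$ and then caps everything. I would organize this as a finite case analysis over the admissible $(p,q,r)$, and for each over the compatible degrees $d$ and the partitions realizing the covering instructions, producing the claimed list of $146$ candidates (deferring the bookkeeping to Section~\ref{enum:sec} and the tables).

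For the realizability half I would invoke, case by case, the toolkit collected in Section~\ref{techniques:sec}: direct geometric or permutation-theoretic constructions (exhibiting three permutations in $\permu_d$ with the prescribed cycle types whose product is the identity and which generate a transitive subgroup) to certify the $117$ realizable candidates, and obstruction arguments — the classical block/decomposition and "let-polynomial"/ramification-count constraints, together with the orbifold-covering obstructions used in~\cite{PaPe} — to certify the $29$ exceptional ones. In practice I would go orbifold-cover by orbifold-cover: each hyperbolic $S(p,q,r)$ with $\dim\tau=0$ has a known (finite) set of geometric orbifold covers by four-cone-point spheres coming from subgroups of triangle groups, and a candidate is realizable as an orbifold cover precisely when it matches one of these; translating back through the covering instructions then settles the surface candidate.

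The main obstacle I expect is not the enumeration (which is bounded and mechanical once the finiteness bound on $d$ is extracted) but the realizability dichotomy for the borderline candidates — those where no obvious permutation construction presents itself and no single classical obstruction applies. For these I anticipate needing either a more delicate analysis inside the relevant triangle group (finding or ruling out an index-$d$ subgroup with the right branching behaviour over the three cone points), or one of the "new" techniques alluded to in the introduction; getting a clean, checkable certificate of exceptionality for each such case, rather than an ad hoc computation, is where the real work lies. A secondary subtlety is ensuring the enumeration is exhaustive and non-redundant: I must be careful that the passage from a candidate surface cover to its preferred orbifold cover is accounted for correctly in both directions (different surface candidates can share an orbifold cover only when their covering instructions differ), so that the count $146=117+29$ is exact and every entry appears in exactly one row of Tables~\ref{hyp:S(4)-to-S(3):summary:1-12:tab}--\ref{hyp:S(4)-to-S(3):summary:117-146:tab}.
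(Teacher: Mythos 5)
Your overall strategy is the one the paper actually follows: enumerate the finitely many admissible $(p,q,r)$, $d$ and partition triples via the orbifold Riemann--Hurwitz relation (this is Section~\ref{enum:sec}, where the case $d\leqslant 12$ is done by listing partitions $\Pi$ of $d$ with $c(\Pi)\leqslant 4$ and the case $d\geqslant 13$ is done in reverse, from $\Xtil\dotsto X$ back to the partitions, using the bounds of Lemmas~\ref{dmax:lem}--\ref{dcong:bounds:lem}), and then settle realizability candidate by candidate with the toolkit of Section~\ref{techniques:sec} (the paper certifies all $117$ realizable items by dessins d'enfant, and the $29$ exceptional ones by a mixture of \textsf{VED}, \textsf{BD}, \textsf{DE}, \textsf{GM} and \textsf{GG}; your subgroup-of-triangle-group/permutation formulation is the \textsf{MR} point of view and is equivalent in principle).

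However, your finiteness bound contains a genuine quantitative error that would make the enumeration incomplete. For a sphere with four cone points one only has $\chiorb\big(S(\alpha,\beta,\gamma,\delta)\big)=2-\sum_{i=1}^4\bigl(1-\tfrac1{\alpha_i}\bigr)>-2$, not $\geqslant -1$; consequently the correct inequality is $d\myprod|\chiorb(X)|=|\chiorb(\Xtil)|<2$, as used in the proof of Proposition~\ref{hyp:S(4)-to-S(3):d_geq_13:prop}, not $d\myprod|\chiorb(X)|\leqslant 1$. Your inequality, used as a filter, already discards genuine candidates in low degree: item \texttt{30}, with $\Xtil=S(6,6,6,6)$ and $X=S(2,6,6)$ in degree $8$, has $-\chiorb(\Xtil)=\tfrac43>1$, and the same happens for every candidate whose $\Xtil$ has $-\chiorb>1$ (items such as \texttt{21}, \texttt{47}, \texttt{69}, \texttt{109}, \texttt{130}, \texttt{140}, \texttt{146}, \dots). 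Moreover, combining your cap with the sharp minimum $|\chiorb(S(p,q,r))|\geqslant\tfrac1{42}$ would bound $d\leqslant 42$ and thus miss items \texttt{143}--\texttt{146} in degrees $44$, $45$, $52$ and $60$. With the correct bound $<2$ one gets $d<84$ and, for $d\geqslant 13$, the constraint $\tfrac{11}{13}<\tfrac1p+\tfrac1q+\tfrac1r<1$ from which the paper's case analysis (cases (\textbf{A}), (\textbf{B}), (\textbf{C}) and the refined bounds $d_{\max}$ and $d_{\max}^{\,\equiv k\,(n)}$) proceeds; also note that for $d\leqslant 12$ the triple $(p,q,r)$ is not bounded by Euler characteristic alone but is automatically controlled because it consists of l.c.m.'s of partitions of $d$, which is how Proposition~\ref{hyp:S(4)-to-S(3):d_leq_12:prop} avoids the issue. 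Once the bound is corrected, the rest of your plan is sound but, as you yourself anticipate, the substance of the theorem lies in the case-by-case realizability verdicts, which in the paper are individual certificates (Figures of dessins, and the \textsf{GM}/\textsf{GG} arguments of Propositions~\ref{28:41:by:GM:prop}--\ref{GG:146:prop}) rather than a uniform criterion.
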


\begin{thm}\label{hyp:T(1)-to-S(3):summary:thm}
There exist precisely $22$ candidate surface branched covers
$\Sigmatil\argdotstoter{d:1}{(\Pi_1,\Pi_2,\Pi_3)}S$ for which
in the associated candidate orbifold cover
$\Xtil\dotsto X$ one has that $X$ and $\Xtil$ are hyperbolic of the form
$X=S(p,q,r)$ and $\Xtil=T(\alpha)$.
Precisely $5$ of these candidate surface covers are exceptional, and the other
$17$ are realizable. The complete description of these candidate covers, including
the associated candidate orbifold covers and information on their realizability,
is contained in Table~\ref{hyp:T(1)-to-S(3):summary:tab}.
\end{thm}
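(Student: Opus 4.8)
The plan is to prove the two assertions of the statement separately: first the enumeration of exactly $22$ candidate surface branched covers of the prescribed type, then the realizability dichotomy ($17$ realizable versus $5$ exceptional), the data being recorded in Table~\ref{hyp:T(1)-to-S(3):summary:tab}.

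For the enumeration I would start from the observation that $\Xtil=T(\alpha)$ carries exactly one cone point. In the preferred associated candidate orbifold cover, a cone point of $\Xtil$ lying over the $i$-th branch point comes from a local degree $d_{ij}$ strictly smaller than $p_i={\rm l.c.m.}\{d_{ij}\}_j$, and has order $p_i/d_{ij}$; a trivial upstairs point over the $i$-th branch point forces $d_{ij}=p_i$. Hence, after relabelling the three branch points, exactly one local degree is special, say $d_{1m_1}=p/\alpha<p:=p_1$, while all remaining degrees over the first point equal $p$ and all degrees over the other two points equal $q:=p_2$ and $r:=p_3$. Thus the three partitions of $d$ are forced into the shape
$$\Pi_1=(\underbrace{p,\ldots,p}_{k},\,p/\alpha),\qquad
\Pi_2=(\underbrace{q,\ldots,q}_{d/q}),\qquad
\Pi_3=(\underbrace{r,\ldots,r}_{d/r}),$$
with $k\geqslant 1$, which immediately imposes $\alpha\mid p$, $q\mid d$ and $r\mid d$. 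Feeding this into the orbifold Riemann--Hurwitz identity~(\ref{RH:orb:eq}) gives $1-\tfrac1\alpha=d\bigl(1-\tfrac1p-\tfrac1q-\tfrac1r\bigr)$, so $d$ is completely determined by $(p,q,r,\alpha)$; since $X$ is rigid hyperbolic the denominator $1-\tfrac1p-\tfrac1q-\tfrac1r$ is bounded below by $\tfrac1{42}$, whence $d<42$. Combined with $q\mid d$ and $r\mid d$ this bounds $q$ and $r$, and then $p$ is pinned down by the same identity, so $(p,q,r,\alpha,d)$ ranges over an explicit finite set. Running through it, together with the three choices for which branch point carries the special local degree, and discarding the tuples for which $d$ fails to be an integer $\geqslant 2$ or $k=d/p-\tfrac1\alpha$ fails to be a positive integer, leaves exactly the $22$ candidates of Table~\ref{hyp:T(1)-to-S(3):summary:tab}. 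This part is purely mechanical.

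For realizability I would use the standard reformulation: a candidate surface branched cover with $n=3$ and partitions $\Pi_1,\Pi_2,\Pi_3$ of $d$ is realizable if and only if there exist permutations $\sigma_i\in S_d$ of cycle type $\Pi_i$ with $\sigma_1\sigma_2\sigma_3=1$ generating a transitive subgroup, equivalently an index-$d$ subgroup $H$ of the triangular group $\Delta=\langle x,y,z\mid x^p=y^q=z^r=xyz=1\rangle$ whose associated quotient orbifold is $T(\alpha)$ --- concretely, $H$ meeting every conjugate of $\langle y\rangle$ and of $\langle z\rangle$ trivially and meeting the conjugates of $\langle x\rangle$ in a single conjugacy class of order-$\alpha$ subgroups. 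For the $17$ realizable candidates the plan is to exhibit such data directly: for each, an explicit triple $(\sigma_1,\sigma_2,\sigma_3)$, presented so the verification of cycle types, product and transitivity is immediate, possibly obtained by composing or perturbing previously known coverings through the techniques collected in Section~\ref{techniques:sec}. For the $5$ exceptional candidates one must instead produce an obstruction: according to the case, an arithmetic or parity incompatibility among the cycle types, a block (imprimitivity) argument showing that any such monodromy would have to descend from a smaller cover that does not exist, or --- exploiting that $d<42$ and that the cycle structures are very rigid --- a direct exhaustion showing that no compatible transitive triple exists; all of these being instances of the techniques of Section~\ref{techniques:sec}.

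The enumeration is routine; the substance of the proof lies in the exceptionality arguments for the $5$ non-realizable candidates, where one must rule out \emph{every} admissible monodromy rather than display one. The favourable feature is that here two of the three partitions are homogeneous and the third is homogeneous except for a single part, so the orbifold-theoretic reformulation turns each case into a finite subgroup-counting problem inside a specific triangular group; I expect the bookkeeping, together with a small number of cases requiring an ad hoc argument rather than a uniform one, to be the main difficulty.
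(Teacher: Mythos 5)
Your enumeration scheme is sound and essentially parallels the paper's: a single cone point on $\Xtil$ forces two partitions to be constant and the third to have exactly one entry that is a proper divisor of its l.c.m.; the orbifold Riemann--Hurwitz identity then gives $d=\frac{1-1/\alpha}{1-1/p-1/q-1/r}<42$, and your integrality conditions (in particular $k=d/p-\frac1\alpha$ a positive integer) are indeed equivalent to $\ell(\Pi_1)+\ell(\Pi_2)+\ell(\Pi_3)=d$, i.e.\ to the underlying surface being the torus, so a finite mechanical search remains. The paper organizes this slightly differently (direct search for $d\leqslant 17$, then $\frac{17}{18}<\frac1p+\frac1q+\frac1r<1$ forcing $(p,q,r)\in\{(2,4,5),(2,3,7),(2,3,8)\}$ for $d\geqslant 18$), but your route would reach the same list of $22$ candidates, even though you do not actually produce it.

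The genuine gap is the second half, which is where the content of the theorem lies: you do not establish realizability or exceptionality of a single candidate. For the $17$ realizable ones you promise explicit monodromy triples (the paper instead exhibits dessins d'enfant) but give none; that is finite work, but it must appear in the proof. More seriously, for the five exceptional candidates your proposed obstructions are only named, not applied, and for two of them they will not come cheaply: \texttt{166} ($d=21$, $(2,\ldots,2,1),(3,\ldots,3),(7,7,7)$) and \texttt{168} ($d=36$, $(2,\ldots,2),(3,\ldots,3),(7,7,7,7,7,1)$) pass all the standard parity/divisibility tests, and the paper excludes them only through a character-theoretic computer search over monodromies (\textsf{MR} with Monge's code) and, for \texttt{168}, a long geometric gluing (\textsf{GG}) case analysis; a hand exhaustion over triples in $\permu_{36}$ is not feasible without such extra input. (By contrast, \texttt{152} does fall to the composition criterion of Proposition~\ref{thm:1.4:prop} and \texttt{163}, \texttt{164} to the \textsf{VED} criterion of Proposition~\ref{VED:prop}, so part of your plan can be made to work, but you would still have to carry out each case.) As it stands, the proposal is a correct framework plus a to-do list, not a proof of the $17$-versus-$5$ dichotomy.
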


\begin{rem}
\emph{The issue of enumerating the relevant candidate covers for
Theorems~\ref{hyp:S(4)-to-S(3):summary:thm} and~\ref{hyp:T(1)-to-S(3):summary:thm}
is an elementary, though complicated, combinatorial problem, and its solution
presented below in Section~\ref{enum:sec} does not employ sophisticated techniques.
On the other hand, to discuss realizability of the candidates found,
we describe in general terms in Section~\ref{techniques:sec}
and then we exploit in Section~\ref{real:ex:sec} a variety of different
geometric methods. As a matter of fact, in some cases we offer new proofs
of the known realizability or exceptionality of some candidates,
and we establish the previously unknown realizability or exceptionality for other candidates
using two or more different methods. Our aim here is to show that a wealth of
different techniques are already in place and should allow one to attack
more and more advanced instances of the Hurwitz existence problem,
according to its splitting we have proposed above in Section~\ref{known:orb:sec}.}
\end{rem}

\begin{table}[h]
\begin{center}
\begin{tabular}{c|c|c|c|c|c|c}
$d$&$\Pi_1$&$\Pi_2$&$\Pi_3$&\footnotesize{Associated} $\Xtil\dotsto X$&\!\!\footnotesize{Realizable?}\!\!&$\#$\\ \hline\hline
\footnotesize{5}
&\footnotesize{(2,1,1,1)}&\footnotesize{(4,1)}&\footnotesize{(5)}&\!\footnotesize{$S(2,2,2,4)\dotsto S(2,4,5)$}\!&
    $\checkmark$&\footnotesize{\texttt{1}}\\ \cline{2-7}
&\footnotesize{(3,1,1)}&\footnotesize{(3,1,1)}&\footnotesize{(5)}&\!\footnotesize{$S(3,3,3,3)\dotsto S(3,3,5)$}\!&
    $\checkmark$&\footnotesize{\texttt{2}}\\ \cline{2-7}
&\footnotesize{(3,1,1)}&\footnotesize{(4,1)}&\footnotesize{(4,1)}&\!\footnotesize{$S(3,3,4,4)\dotsto S(3,4,4)$}\!&
    $\checkmark$&\footnotesize{\texttt{3}}\\ \cline{2-7}
&\footnotesize{(2,2,1)}&\footnotesize{(3,2)}&\footnotesize{(4,1)}&\!\footnotesize{$S(2,2,3,4)\dotsto S(2,4,6)$}\!&
    $\checkmark$&\footnotesize{\texttt{4}}\\ \hline

\footnotesize{6}
&\footnotesize{(2,2,1,1)}&\footnotesize{(4,1,1)}&\footnotesize{(6)}&\!\footnotesize{$S(2,2,4,4)\dotsto S(2,4,6)$}\!&
    $\checkmark$&\footnotesize{\texttt{5}}\\ \cline{2-7}
&\footnotesize{(2,2,1,1)}&\footnotesize{(5,1)}&\footnotesize{(5,1)}&\!\footnotesize{$S(2,2,5,5)\dotsto S(2,5,5)$}\!&
    $\checkmark$&\footnotesize{\texttt{6}}\\ \cline{2-7}
&\footnotesize{(2,2,1,1)}&\footnotesize{(4,2)}&\footnotesize{(5,1)}&\!\footnotesize{$S(2,2,2,5)\dotsto S(2,4,5)$}\!&
    $\checkmark$&\footnotesize{\texttt{7}}\\ \cline{2-7}
&\footnotesize{(3,1,1,1)}&\footnotesize{(3,3)}&\footnotesize{(5,1)}&\!\footnotesize{$S(3,3,3,5)\dotsto S(3,3,5)$}\!&
    $\checkmark$&\footnotesize{\texttt{8}}\\ \cline{2-7}
&\footnotesize{(3,1,1,1)}&\footnotesize{(3,3)}&\footnotesize{(4,2)}&\!\footnotesize{$S(2,3,3,3)\dotsto S(3,3,4)$}\!&
    $\checkmark$&\footnotesize{\texttt{9}}\\ \cline{2-7}
&\footnotesize{(3,3)}&\footnotesize{(4,1,1)}&\footnotesize{(4,1,1)}&\!\footnotesize{$S(4,4,4,4)\dotsto S(3,4,4)$}\!&
    $\checkmark$&\footnotesize{\texttt{10}}\\ \cline{2-7}
&\footnotesize{(2,2,2)}&\footnotesize{(3,2,1)}&\footnotesize{(5,1)}&\!\footnotesize{$S(2,3,5,6)\dotsto S(2,5,6)$}\!&
    $\checkmark$&\footnotesize{\texttt{11}}\\ \cline{2-7}
&\footnotesize{(2,2,2)}&\footnotesize{(3,2,1)}&\footnotesize{(4,2)}&\!\footnotesize{$S(2,2,3,6)\dotsto S(2,4,6)$}\!&
    $\checkmark$&\footnotesize{\texttt{12}}
\end{tabular}
\end{center}
\mycap{Candidate surface branched covers
$S\argdotstobis{d:1}{(\Pi_1,\Pi_2,\Pi_3)}S$
with associated hyperbolic $S(\alpha,\beta,\gamma,\delta)\dotsto S(p,q,r)$;
continued in Tables~\ref{hyp:S(4)-to-S(3):summary:13-47:tab}
to~\ref{hyp:S(4)-to-S(3):summary:117-146:tab}.\label{hyp:S(4)-to-S(3):summary:1-12:tab}}
\end{table}

\clearpage

\begin{table}
\begin{center}
\begin{tabular}{c|c|c|c|c|c|c}
$d$&$\Pi_1$&$\Pi_2$&$\Pi_3$&\footnotesize{Associated} $\Xtil\dotsto X$&\!\!\footnotesize{Realizable?}\!\!&$\#$\\ \hline\hline
\footnotesize{7}
&\footnotesize{(2,2,1,1,1)}&\footnotesize{(3,3,1)}&\footnotesize{(7)}&\!\footnotesize{$S(2,2,2,3)\dotsto S(2,3,7)$}\!&
    $\checkmark$&\footnotesize{\texttt{13}}\\ \cline{2-7}
&\footnotesize{(2,2,2,1)}&\footnotesize{(4,1,1,1)}&\footnotesize{(7)}&\!\footnotesize{$S(2,4,4,4)\dotsto S(2,4,7)$}\!&
    $\checkmark$&\footnotesize{\texttt{14}}\\ \cline{2-7}
&\footnotesize{(3,3,1)}&\footnotesize{(3,3,1)}&\footnotesize{(5,1,1)}&\!\footnotesize{$S(3,3,5,5)\dotsto S(3,3,5)$}\!&
    $\checkmark$&\footnotesize{\texttt{15}}\\ \cline{2-7}
&\footnotesize{(3,3,1)}&\footnotesize{(3,3,1)}&\footnotesize{(4,2,1)}&\!\footnotesize{$S(2,3,3,4)\dotsto S(3,3,4)$}\!&
    $\checkmark$&\footnotesize{\texttt{16}}\\ \cline{2-7}
&\footnotesize{(2,2,2,1)}&\footnotesize{(3,3,1)}&\footnotesize{(5,2)}&\!\footnotesize{$S(2,2,3,5)\dotsto S(2,3,10)$}\!&
    $\checkmark$&\footnotesize{\texttt{17}}\\ \cline{2-7}
&\footnotesize{(2,2,2,1)}&\footnotesize{(3,3,1)}&\footnotesize{(4,3)}&\!\footnotesize{$S(2,3,3,4)\dotsto S(2,3,12)$}\!&
    $\checkmark$&\footnotesize{\texttt{18}}\\ \cline{2-7}
&\footnotesize{(2,2,2,1)}&\footnotesize{(4,2,1)}&\footnotesize{(6,1)}&\!\footnotesize{$S(2,2,4,6)\dotsto S(2,4,6)$}\!&
    $\checkmark$&\footnotesize{\texttt{19}}\\ \cline{2-7}
&\footnotesize{(2,2,2,1)}&\footnotesize{(5,1,1)}&\footnotesize{(6,1)}&\!\footnotesize{$S(2,5,5,6)\dotsto S(2,5,6)$}\!&
    $\checkmark$&\footnotesize{\texttt{20}}\\ \hline

\footnotesize{8}
&\footnotesize{(2,2,2,2)}&\footnotesize{(4,1,1,1,1)}&\footnotesize{(8)}&\!\footnotesize{$S(4,4,4,4)\dotsto S(2,4,8)$}\!&
    $\checkmark$&\footnotesize{\texttt{21}}\\ \cline{2-7}
&\footnotesize{(2,2,2,1,1)}&\footnotesize{(3,3,1,1)}&\footnotesize{(8)}&\!\footnotesize{$S(2,2,3,3)\dotsto S(2,3,8)$}\!&
    $\checkmark$&\footnotesize{\texttt{22}}\\ \cline{2-7}
&\footnotesize{(2,2,2,2)}&\footnotesize{(3,2,2,1)}&\footnotesize{(4,4)}&\!\footnotesize{$S(2,3,3,6)\dotsto S(2,4,6)$}\!&
    \footnotesize{\textbf{Excep}}&\footnotesize{\texttt{23}}\\ \cline{2-7}
&\footnotesize{(2,2,2,2)}&\footnotesize{(5,1,1,1)}&\footnotesize{(7,1)}&\!\footnotesize{$S(5,5,5,7)\dotsto S(2,5,7)$}\!&
    $\checkmark$&\footnotesize{\texttt{24}}\\ \cline{2-7}
&\footnotesize{(2,2,2,2)}&\footnotesize{(5,1,1,1)}&\footnotesize{(6,2)}&\!\footnotesize{$S(3,5,5,5)\dotsto S(2,5,6)$}\!&
    \footnotesize{\textbf{Excep}}&\footnotesize{\texttt{25}}\\ \cline{2-7}
&\footnotesize{(2,2,2,2)}&\footnotesize{(4,2,1,1)}&\footnotesize{(7,1)}&\!\footnotesize{$S(2,4,4,7)\dotsto S(2,4,7)$}\!&
    $\checkmark$&\footnotesize{\texttt{26}}\\ \cline{2-7}
&\footnotesize{(2,2,2,2)}&\footnotesize{(4,2,1,1)}&\footnotesize{(6,2)}&\!\footnotesize{$S(2,3,4,4)\dotsto S(2,4,6)$}\!&
    $\checkmark$&\footnotesize{\texttt{27}}\\ \cline{2-7}
&\footnotesize{(2,2,2,2)}&\footnotesize{(3,3,1,1)}&\footnotesize{(5,3)}&\!\footnotesize{$S(3,3,3,5)\dotsto S(2,3,15)$}\!&
    \footnotesize{\textbf{Excep}}&\footnotesize{\texttt{28}}\\ \cline{2-7}
&\footnotesize{(3,3,1,1)}&\footnotesize{(3,3,1,1)}&\footnotesize{(4,4)}&\!\footnotesize{$S(3,3,3,3)\dotsto S(3,3,4)$}\!&
    $\checkmark$&\footnotesize{\texttt{29}}\\ \cline{2-7}
&\footnotesize{(2,2,2,2)}&\footnotesize{(6,1,1)}&\footnotesize{(6,1,1)}&\!\footnotesize{$S(6,6,6,6)\dotsto S(2,6,6)$}\!&
    $\checkmark$&\footnotesize{\texttt{30}}\\ \cline{2-7}
&\footnotesize{(2,2,2,2)}&\footnotesize{(4,2,2)}&\footnotesize{(6,1,1)}&\!\footnotesize{$S(2,2,6,6)\dotsto S(2,4,6)$}\!&
    \footnotesize{\textbf{Excep}}&\footnotesize{\texttt{31}}\\ \cline{2-7}
&\footnotesize{(2,2,2,1,1)}&\footnotesize{(4,4)}&\footnotesize{(6,1,1)}&\!\footnotesize{$S(2,2,6,6)\dotsto S(2,4,6)$}\!&
    $\checkmark$&\footnotesize{\texttt{32}}\\ \hline

\footnotesize{9}
&\!\!\footnotesize{(2,2,2,1,1,1)}\!\!&\footnotesize{(3,3,3)}&\footnotesize{(8,1)}&\!\footnotesize{$S(2,2,2,8)\dotsto S(2,3,8)$}\!&
    $\checkmark$&\footnotesize{\texttt{33}}\\ \cline{2-7}
&\footnotesize{(2,2,2,2,1)}&\footnotesize{(3,3,1,1,1)}&\footnotesize{(9)}&\!\footnotesize{$S(2,3,3,3)\dotsto S(2,3,9)$}\!&
    $\checkmark$&\footnotesize{\texttt{34}}\\ \cline{2-7}
&\footnotesize{(3,3,3)}&\footnotesize{(3,3,3)}&\!\!\footnotesize{(5,1,1,1,1)}\!\!&\!\footnotesize{$S(5,5,5,5)\dotsto S(3,3,5)$}\!&
    \footnotesize{\textbf{Excep}}&\footnotesize{\texttt{35}}\\ \cline{2-7}
&\footnotesize{(3,3,3)}&\footnotesize{(3,3,3)}&\!\!\footnotesize{(4,2,1,1,1)}\!\!&\!\footnotesize{$S(2,4,4,4)\dotsto S(3,3,4)$}\!&
    \footnotesize{\textbf{Excep}}&\footnotesize{\texttt{36}}\\ \cline{2-7}
&\footnotesize{(3,3,1,1,1)}&\footnotesize{(3,3,3)}&\footnotesize{(4,4,1)}&\!\footnotesize{$S(3,3,3,4)\dotsto S(3,3,4)$}\!&
    $\checkmark$&\footnotesize{\texttt{37}}\\ \cline{2-7}
&\footnotesize{(2,2,2,2,1)}&\footnotesize{(4,4,1)}&\footnotesize{(7,1,1)}&\!\footnotesize{$S(2,4,7,7)\dotsto S(2,4,7)$}\!&
    $\checkmark$&\footnotesize{\texttt{38}}\\ \cline{2-7}
&\footnotesize{(2,2,2,2,1)}&\footnotesize{(4,4,1)}&\footnotesize{(6,2,1)}&\!\footnotesize{$S(2,3,4,6)\dotsto S(2,4,6)$}\!&
    $\checkmark$&\footnotesize{\texttt{39}}\\ \cline{2-7}
&\footnotesize{(2,2,2,2,1)}&\footnotesize{(3,3,3)}&\footnotesize{(5,3,1)}&\!\footnotesize{$S(2,3,5,15)\dotsto S(2,3,15)$}\!&
    $\checkmark$&\footnotesize{\texttt{40}}\\ \cline{2-7}
&\footnotesize{(2,2,2,2,1)}&\footnotesize{(3,3,3)}&\footnotesize{(5,2,2)}&\!\footnotesize{$S(2,2,5,5)\dotsto S(2,3,10)$}\!&
    \footnotesize{\textbf{Excep}}&\footnotesize{\texttt{41}}\\ \cline{2-7}
&\footnotesize{(2,2,2,2,1)}&\footnotesize{(3,3,3)}&\footnotesize{(4,3,2)}&\!\footnotesize{$S(2,3,4,6)\dotsto S(2,3,12)$}\!&
    $\checkmark$&\footnotesize{\texttt{42}}\\ \hline
\footnotesize{10}
&\footnotesize{(2,2,2,2,2)}&\!\!\footnotesize{(3,3,1,1,1,1)}\!\!&\footnotesize{(10)}&\!\footnotesize{$S(3,3,3,3)\dotsto S(2,3,10)$}\!&
    $\checkmark$&\footnotesize{\texttt{43}}\\ \cline{2-7}
&\!\!\footnotesize{(2,2,2,2,1,1)}\!\!&\footnotesize{(4,4,1,1)}&\footnotesize{(5,5)}&\!\footnotesize{$S(2,2,4,4)\dotsto S(2,4,5)$}\!&
    $\checkmark$&\footnotesize{\texttt{44}}\\ \cline{2-7}
&\!\!\footnotesize{(2,2,2,2,1,1)}\!\!&\footnotesize{(3,3,3,1)}&\footnotesize{(8,2)}&\!\footnotesize{$S(2,2,3,4)\dotsto S(2,3,8)$}\!&
    $\checkmark$&\footnotesize{\texttt{45}}\\ \cline{2-7}
&\!\!\footnotesize{(2,2,2,2,1,1)}\!\!&\footnotesize{(3,3,3,1)}&\footnotesize{(9,1)}&\!\footnotesize{$S(2,2,3,9)\dotsto S(2,3,9)$}\!&
    $\checkmark$&\footnotesize{\texttt{46}}\\ \cline{2-7}
&\footnotesize{(2,2,2,2,2)}&\footnotesize{(5,5)}&\!\!\footnotesize{(6,1,1,1,1)}\!\!&\!\footnotesize{$S(6,6,6,6)\dotsto S(2,5,6)$}\!&
    $\checkmark$&\footnotesize{\texttt{47}}
\end{tabular}
\end{center}
\mycap{Continued from Table~\ref{hyp:S(4)-to-S(3):summary:1-12:tab}.\label{hyp:S(4)-to-S(3):summary:13-47:tab}}
\end{table}

\begin{table}
\begin{center}
\begin{tabular}{c|c|c|c|c|c|c}
$d$&$\Pi_1$&$\Pi_2$&$\Pi_3$&\footnotesize{Associated} $\Xtil\dotsto X$&\!\!\footnotesize{Realizable?}\!\!&$\#$\\ \hline\hline
\footnotesize{10}
&\footnotesize{(2,2,2,2,2)}&\footnotesize{(4,2,2,1,1)}&\footnotesize{(5,5)}&\!\footnotesize{$S(2,2,4,4)\dotsto S(2,4,5)$}\!&
    $\checkmark$&\footnotesize{\texttt{48}}\\ \cline{2-7}
&\footnotesize{(2,2,2,2,2)}&\footnotesize{(4,4,2)}&\footnotesize{(7,1,1,1)}&\!\footnotesize{$S(2,7,7,7)\dotsto S(2,4,7)$}\!&
    \footnotesize{\textbf{Excep}}&\footnotesize{\texttt{49}}\\ \cline{2-7}
&\footnotesize{(2,2,2,2,2)}&\footnotesize{(4,4,2)}&\footnotesize{(6,2,1,1)}&\!\footnotesize{$S(2,3,6,6)\dotsto S(2,4,6)$}\!&
    \footnotesize{\textbf{Excep}}&\footnotesize{\texttt{50}}\\ \cline{2-7}
&\footnotesize{(2,2,2,2,2)}&\footnotesize{(4,4,1,1)}&\footnotesize{(8,1,1)}&\!\footnotesize{$S(4,4,8,8)\dotsto S(2,4,8)$}\!&
    $\checkmark$&\footnotesize{\texttt{51}}\\ \cline{2-7}
&\footnotesize{(2,2,2,2,2)}&\footnotesize{(4,4,1,1)}&\footnotesize{(6,3,1)}&\!\footnotesize{$S(2,4,4,6)\dotsto S(2,4,6)$}\!&
    $\checkmark$&\footnotesize{\texttt{52}}\\ \cline{2-7}
&\footnotesize{(2,2,2,2,2)}&\footnotesize{(4,4,1,1)}&\footnotesize{(6,2,2)}&\!\footnotesize{$S(3,3,4,4)\dotsto S(2,4,6)$}\!&
    $\checkmark$&\footnotesize{\texttt{53}}\\ \cline{2-7}
&\footnotesize{(2,2,2,2,2)}&\footnotesize{(3,3,3,1)}&\footnotesize{(7,2,1)}&\!\footnotesize{$S(2,3,7,14)\dotsto S(2,3,14)$}\!&
    $\checkmark$&\footnotesize{\texttt{54}}\\ \cline{2-7}
&\footnotesize{(2,2,2,2,2)}&\footnotesize{(3,3,3,1)}&\footnotesize{(5,4,1)}&\!\footnotesize{$S(3,4,5,20)\dotsto S(2,3,20)$}\!&
    $\checkmark$&\footnotesize{\texttt{55}}\\ \cline{2-7}
&\footnotesize{(2,2,2,2,2)}&\footnotesize{(3,3,3,1)}&\footnotesize{(5,3,2)}&\!\footnotesize{$S(3,6,10,15)\dotsto S(2,3,30)$}\!&
    $\checkmark$&\footnotesize{\texttt{56}}\\ \cline{2-7}
&\footnotesize{(2,2,2,2,2)}&\footnotesize{(3,3,3,1)}&\footnotesize{(4,3,3)}&\!\footnotesize{$S(3,3,4,4)\dotsto S(2,3,12)$}\!&
    \footnotesize{\textbf{Excep}}&\footnotesize{\texttt{57}}\\ \cline{2-7}
&\footnotesize{(3,3,3,1)}&\footnotesize{(3,3,3,1)}&\footnotesize{(4,4,1,1)}&\!\footnotesize{$S(3,3,4,4)\dotsto S(3,3,4)$}\!&
    $\checkmark$&\footnotesize{\texttt{58}}\\ \hline

\footnotesize{11}
&\footnotesize{(2,2,2,2,2,1)}&\footnotesize{(3,3,3,1,1)}&\footnotesize{(10,1)}&\!\footnotesize{$S(2,3,3,10)\dotsto S(2,3,10)$}\!&
    $\checkmark$&\footnotesize{\texttt{59}}\\ \cline{2-7}
&\footnotesize{(2,2,2,2,2,1)}&\footnotesize{(4,4,2,1)}&\footnotesize{(5,5,1)}&\!\footnotesize{$S(2,2,4,5)\dotsto S(2,4,5)$}\!&
    $\checkmark$&\footnotesize{\texttt{60}}\\ \hline

\footnotesize{12}
&\!\!\footnotesize{(2,2,2,2,2,1,1)}\!\!&\footnotesize{(3,3,3,3)}&\footnotesize{(10,1,1)}&\!\footnotesize{$S(2,2,10,10)\dotsto S(2,3,10)$}\!&
    $\checkmark$&\footnotesize{\texttt{61}}\\ \cline{2-7}
&\!\!\footnotesize{(2,2,2,2,2,1,1)}\!\!&\footnotesize{(3,3,3,3)}&\footnotesize{(8,2,2)}&\!\footnotesize{$S(2,2,4,4)\dotsto S(2,3,8)$}\!&
    $\checkmark$&\footnotesize{\texttt{62}}\\ \cline{2-7}
&\!\!\footnotesize{(2,2,2,2,2,1,1)}\!\!&\footnotesize{(4,4,4)}&\footnotesize{(5,5,1,1)}&\!\footnotesize{$S(2,2,5,5)\dotsto S(2,4,5)$}\!&
    $\checkmark$&\footnotesize{\texttt{63}}\\ \cline{2-7}
&\footnotesize{(2,\ldots,2)}&\!\!\footnotesize{(4,4,1,1,1,1)}\!\!&\footnotesize{(6,6)}&\!\footnotesize{$S(4,4,4,4)\dotsto S(2,4,6)$}\!&
    $\checkmark$&\footnotesize{\texttt{64}}\\ \cline{2-7}
&\footnotesize{(2,\ldots,2)}&\!\!\footnotesize{(3,3,3,1,1,1)}\!\!&\footnotesize{(11,1)}&\!\footnotesize{$S(3,3,3,11)\dotsto S(2,3,11)$}\!&
    $\checkmark$&\footnotesize{\texttt{65}}\\ \cline{2-7}
&\footnotesize{(2,\ldots,2)}&\!\!\footnotesize{(3,3,3,1,1,1)}\!\!&\footnotesize{(10,2)}&\!\footnotesize{$S(3,3,3,5)\dotsto S(2,3,10)$}\!&
    $\checkmark$&\footnotesize{\texttt{66}}\\ \cline{2-7}
&\footnotesize{(2,\ldots,2)}&\!\!\footnotesize{(3,3,3,1,1,1)}\!\!&\footnotesize{(9,3)}&\!\footnotesize{$S(3,3,3,3)\dotsto S(2,3,9)$}\!&
    $\checkmark$&\footnotesize{\texttt{67}}\\ \cline{2-7}
&\footnotesize{(2,\ldots,2)}&\!\!\footnotesize{(3,3,3,1,1,1)}\!\!&\footnotesize{(8,4)}&\!\footnotesize{$S(2,3,3,3)\dotsto S(2,3,8)$}\!&
    $\checkmark$&\footnotesize{\texttt{68}}\\ \cline{2-7}
&\footnotesize{(2,\ldots,2)}&\footnotesize{(4,4,4)}&\!\!\footnotesize{(8,1,1,1,1)}\!\!&\!\footnotesize{$S(8,8,8,8)\dotsto S(2,4,8)$}\!&
    \footnotesize{\textbf{Excep}}&\footnotesize{\texttt{69}}\\
            \cline{2-7}
&\footnotesize{(2,\ldots,2)}&\footnotesize{(4,4,4)}&\!\!\footnotesize{(6,3,1,1,1)}\!\!&\!\footnotesize{$S(2,6,6,6)\dotsto S(2,4,6)$}\!&
    $\checkmark$&\footnotesize{\texttt{70}}\\ \cline{2-7}
&\footnotesize{(2,\ldots,2)}&\footnotesize{(4,4,4)}&\!\!\footnotesize{(6,2,2,1,1)}\!\!&\!\footnotesize{$S(3,3,6,6)\dotsto S(2,4,6)$}\!&
    $\checkmark$&\footnotesize{\texttt{71}}\\ \cline{2-7}
&\footnotesize{(2,\ldots,2)}&\footnotesize{(3,3,3,3)}&\footnotesize{(7,3,1,1)}&\!\footnotesize{$S(3,7,21,21)\dotsto S(2,3,21)$}\!&
    \footnotesize{\textbf{Excep}}&\footnotesize{\texttt{72}}\\
            \cline{2-7}
&\footnotesize{(2,\ldots,2)}&\footnotesize{(3,3,3,3)}&\footnotesize{(7,2,2,1)}&\!\footnotesize{$S(2,7,7,14)\dotsto S(2,3,14)$}\!&
    \footnotesize{\textbf{Excep}}&\footnotesize{\texttt{73}}\\
            \cline{2-7}
&\footnotesize{(2,\ldots,2)}&\footnotesize{(3,3,3,3)}&\footnotesize{(6,4,1,1)}&\!\footnotesize{$S(2,3,12,12)\dotsto S(2,3,12)$}\!&
    \footnotesize{\textbf{Excep}}&\footnotesize{\texttt{74}}\\
            \cline{2-7}
&\footnotesize{(2,\ldots,2)}&\footnotesize{(3,3,3,3)}&\footnotesize{(5,4,2,1)}&\!\footnotesize{$S(4,5,10,20)\dotsto S(2,3,20)$}\!&
    \footnotesize{\textbf{Excep}}&\footnotesize{\texttt{75}}\\
            \cline{2-7}
&\footnotesize{(2,\ldots,2)}&\footnotesize{(3,3,3,3)}&\footnotesize{(5,3,3,1)}&\!\footnotesize{$S(3,5,5,15)\dotsto S(2,3,15)$}\!&
    \footnotesize{\textbf{Excep}}&\footnotesize{\texttt{76}}\\
            \cline{2-7}
&\footnotesize{(2,\ldots,2)}&\footnotesize{(3,3,3,3)}&\footnotesize{(5,3,2,2)}&\!\footnotesize{$S(6,10,15,15)\dotsto S(2,3,30)$}\!&
    \footnotesize{\textbf{Excep}}&\footnotesize{\texttt{77}}\\
            \cline{2-7}
&\footnotesize{(2,\ldots,2)}&\footnotesize{(3,3,3,3)}&\footnotesize{(4,3,3,2)}&\!\footnotesize{$S(3,4,4,6)\dotsto S(2,3,12)$}\!&
    \footnotesize{\textbf{Excep}}&\footnotesize{\texttt{78}}\\
            \cline{2-7}
&\footnotesize{(2,\ldots,2)}&\footnotesize{(3,3,3,3)}&\footnotesize{(4,4,3,1)}&\!\footnotesize{$S(3,3,4,12)\dotsto S(2,3,12)$}\!&
    \footnotesize{\textbf{Excep}}&\footnotesize{\texttt{79}}\\
            \cline{2-7}
&\footnotesize{(3,3,3,3)}&\footnotesize{(3,3,3,3)}&\!\!\footnotesize{(4,4,1,1,1,1)}\!\!&\!\footnotesize{$S(4,4,4,4)\dotsto S(3,3,4)$}\!&
    $\checkmark$&\footnotesize{\texttt{80}}\\ \cline{2-7}
&\footnotesize{(2,\ldots,2)}&\footnotesize{(5,5,1,1)}&\footnotesize{(5,5,1,1)}&\!\footnotesize{$S(5,5,5,5)\dotsto S(2,5,5)$}\!&
    $\checkmark$&\footnotesize{\texttt{81}}\\ \cline{2-7}
&\footnotesize{(2,\ldots,2)}&\footnotesize{(4,4,2,2)}&\footnotesize{(5,5,1,1)}&\!\footnotesize{$S(2,2,5,5)\dotsto S(2,4,5)$}\!&
    $\checkmark$&\footnotesize{\texttt{82}}
\end{tabular}
\end{center}
\mycap{Continued from Table~\ref{hyp:S(4)-to-S(3):summary:13-47:tab}.\label{hyp:S(4)-to-S(3):summary:48-82:tab}}
\end{table}

\clearpage

\begin{table}
\begin{center}
\begin{tabular}{c|c|c|c|c|c|c}
$d$&$\Pi_1$&$\Pi_2$&$\Pi_3$&\footnotesize{Associated} $\Xtil\dotsto X$&\!\!\footnotesize{Realizable?}\!\!&$\#$\\ \hline\hline
\footnotesize{13}
&\footnotesize{(2,\ldots,2,1)}&\footnotesize{(3,3,3,3,1)}&\footnotesize{(8,4,1)}&\!\footnotesize{$S(2,2,3,8)\dotsto S(2,3,8)$}\!&
    $\checkmark$&\footnotesize{\texttt{83}}\\
            \cline{2-7}
&\footnotesize{(2,\ldots,2,1)}&\footnotesize{(3,3,3,3,1)}&\footnotesize{(11,1,1)}&\!\scriptsize{$S(2,3,11,11)\dotsto S(2,3,11)$}\!&
    $\checkmark$&\footnotesize{\texttt{84}}\\
            \cline{2-7}
&\footnotesize{(2,\ldots,2,1)}&\footnotesize{(3,3,3,3,1)}&\footnotesize{(10,2,1)}&\!\footnotesize{$S(2,3,5,10)\dotsto S(2,3,10)$}\!&
    $\checkmark$&\footnotesize{\texttt{85}}\\
            \cline{2-7}
&\footnotesize{(2,\ldots,2,1)}&\footnotesize{(3,3,3,3,1)}&\footnotesize{(9,3,1)}&\!\footnotesize{$S(2,3,3,9)\dotsto S(2,3,9)$}\!&
    $\checkmark$&\footnotesize{\texttt{86}}\\
            \hline

\footnotesize{14}
&\footnotesize{(2,\ldots,2)}&\footnotesize{(3,3,3,3,1,1)}&\footnotesize{(10,2,2)}&\!\footnotesize{$S(3,3,5,5)\dotsto S(2,3,10)$}\!&
    $\checkmark$&\footnotesize{\texttt{87}}\\
            \cline{2-7}
&\footnotesize{(2,\ldots,2)}&\footnotesize{(3,3,3,3,1,1)}&\footnotesize{(8,4,2)}&\!\footnotesize{$S(2,3,3,4)\dotsto S(2,3,8)$}\!&
    $\checkmark$&\footnotesize{\texttt{88}}\\
            \cline{2-7}
&\footnotesize{(2,\ldots,2)}&\footnotesize{(3,3,3,3,1,1)}&\footnotesize{(12,1,1)}&\!\scriptsize{$S(3,3,12,12)\dotsto S(2,3,12)$}\!&
    $\checkmark$&\footnotesize{\texttt{89}}\\
            \cline{2-7}
&\footnotesize{(2,\ldots,2)}&\footnotesize{(4,4,4,1,1)}&\footnotesize{(6,6,1,1)}&\!\footnotesize{$S(4,4,6,6)\dotsto S(2,4,6)$}\!&
    $\checkmark$&\footnotesize{\texttt{90}}\\ \cline{2-7}
&\footnotesize{(2,\ldots,2,1,1)}&\footnotesize{(3,3,3,3,1,1)}&\footnotesize{(7,7)}&\!\footnotesize{$S(2,2,3,3)\dotsto S(2,3,7)$}\!&
    $\checkmark$&\footnotesize{\texttt{91}}\\ \hline

\footnotesize{15}
&\footnotesize{(2,\ldots,2,1)}&\footnotesize{(3,3,3,3,3)}&\footnotesize{(12,1,1,1)}&\!\scriptsize{$S(2,12,12,12)\dotsto S(2,3,12)$}\!&
    $\checkmark$&\footnotesize{\texttt{92}}\\
            \cline{2-7}
&\footnotesize{(2,\ldots,2,1)}&\footnotesize{(3,3,3,3,3)}&\footnotesize{(8,4,2,1)}&\!\footnotesize{$S(2,2,4,8)\dotsto S(2,3,8)$}\!&
    $\checkmark$&\footnotesize{\texttt{93}}\\
            \cline{2-7}
&\footnotesize{(2,\ldots,2,1)}&\footnotesize{(3,3,3,3,3)}&\footnotesize{(10,2,2,1)}&\!\footnotesize{$S(2,5,5,10)\dotsto S(2,3,10)$}\!&
    $\checkmark$&\footnotesize{\texttt{94}}\\
            \cline{2-7}
&\footnotesize{(2,\ldots,2,1)}&\footnotesize{(4,4,4,1,1,1)}&\footnotesize{(5,5,5)}&\!\footnotesize{$S(2,4,4,4)\dotsto S(2,4,5)$}\!&
    $\checkmark$&\footnotesize{\texttt{95}}\\
            \cline{2-7}
&\!\!\footnotesize{(2,\ldots,2,1,1,1)}\!\!&\footnotesize{(3,3,3,3,3)}&\footnotesize{(7,7,1)}&\!\footnotesize{$S(2,2,2,7)\dotsto S(2,3,7)$}\!&
    $\checkmark$&\footnotesize{\texttt{96}}\\ \hline

\footnotesize{16}
&\footnotesize{(2,\ldots,2)}&\footnotesize{(3,3,3,3,3,1)}&\footnotesize{(10,2,2,2)}&\!\footnotesize{$S(3,5,5,5)\dotsto S(2,3,10)$}\!&
    \footnotesize{\textbf{Excep}}&\footnotesize{\texttt{97}}\\
            \cline{2-7}
&\footnotesize{(2,\ldots,2)}&\footnotesize{(3,3,3,3,3,1)}&\footnotesize{(8,4,2,2)}&\!\footnotesize{$S(2,3,4,4)\dotsto S(2,3,8)$}\!&
    \footnotesize{\textbf{Excep}}&\footnotesize{\texttt{98}}\\
            \cline{2-7}
&\footnotesize{(2,\ldots,2)}&\!\!\footnotesize{(3,3,3,3,1,1,1,1)}\!\!&\footnotesize{(8,8)}&\!\footnotesize{$S(3,3,3,3)\dotsto S(2,3,8)$}\!&
    $\checkmark$&\footnotesize{\texttt{99}}\\
            \cline{2-7}
&\footnotesize{(2,\ldots,2)}&\footnotesize{(3,3,3,3,3,1)}&\footnotesize{(13,1,1,1)}&\!\scriptsize{$S(3,13,13,13)\dotsto S(2,3,13)$}\!&
    $\checkmark$&\footnotesize{\texttt{100}}\\
            \cline{2-7}
&\footnotesize{(2,\ldots,2)}&\footnotesize{(3,3,3,3,3,1)}&\footnotesize{(12,2,1,1)}&\!\scriptsize{$S(3,6,12,12)\dotsto S(2,3,12)$}\!&
    $\checkmark$&\footnotesize{\texttt{101}}\\
            \cline{2-7}
&\footnotesize{(2,\ldots,2)}&\footnotesize{(3,3,3,3,3,1)}&\footnotesize{(9,3,3,1)}&\!\footnotesize{$S(3,3,3,9)\dotsto S(2,3,9)$}\!&
    $\checkmark$&\footnotesize{\texttt{102}}\\
            \cline{2-7}
&\footnotesize{(2,\ldots,2)}&\footnotesize{(4,4,4,4)}&\!\!\footnotesize{(6,6,1,1,1,1)}\!\!&\!\footnotesize{$S(6,6,6,6)\dotsto S(2,4,6)$}\!&
    $\checkmark$&\footnotesize{\texttt{103}}\\
            \cline{2-7}
&\footnotesize{(2,\ldots,2)}&\footnotesize{(4,4,4,2,1,1)}&\footnotesize{(5,5,5,1)}&\!\footnotesize{$S(2,4,4,5)\dotsto S(2,4,5)$}\!&
    $\checkmark$&\footnotesize{\texttt{104}}\\ \hline

\footnotesize{17}
&\footnotesize{(2,\ldots,2,1)}&\footnotesize{(3,3,3,3,3,1,1)}&\footnotesize{(8,8,1)}&\!\footnotesize{$S(2,3,3,8)\dotsto S(2,3,8)$}\!&
    $\checkmark$&\footnotesize{\texttt{105}}\\ \cline{2-7}
&\footnotesize{(2,\ldots,2,1)}&\footnotesize{(4,4,4,4,1)}&\footnotesize{(5,5,5,1,1)}&\!\footnotesize{$S(2,4,5,5)\dotsto S(2,4,5)$}\!&
    $\checkmark$&\footnotesize{\texttt{106}}\\ \hline

\footnotesize{18}
&\footnotesize{(2,\ldots,2,1,1)}&\footnotesize{(3,\ldots,3)}&\footnotesize{(8,8,1,1)}&\!\footnotesize{$S(2,2,8,8)\dotsto S(2,3,8)$}\!&
    $\checkmark$&\footnotesize{\texttt{107}}\\
            \cline{2-7}
&\footnotesize{(2,\ldots,2)}&\!\!\footnotesize{(3,3,3,3,3,1,1,1)}\!\!&\footnotesize{(8,8,2)}&\!\footnotesize{$S(3,3,3,4)\dotsto S(2,3,8)$}\!&
    $\checkmark$&\footnotesize{\texttt{108}}\\
            \cline{2-7}
&\footnotesize{(2,\ldots,2)}&\footnotesize{(3,\ldots,3)}&\footnotesize{(14,1,1,1,1)}&\!\scriptsize{$S(14,14,14,14)\dotsto S(2,3,14)$}\!&
    $\checkmark$&\footnotesize{\texttt{109}}\\
            \cline{2-7}
&\footnotesize{(2,\ldots,2)}&\footnotesize{(3,\ldots,3)}&\footnotesize{(12,2,2,1,1)}&\!\scriptsize{$S(6,6,12,12)\dotsto S(2,3,12)$}\!&
    $\checkmark$&\footnotesize{\texttt{110}}\\
            \cline{2-7}
&\footnotesize{(2,\ldots,2)}&\footnotesize{(3,\ldots,3)}&\footnotesize{(12,3,1,1,1)}&\!\scriptsize{$S(4,12,12,12)\dotsto S(2,3,12)$}\!&
    $\checkmark$&\footnotesize{\texttt{111}}\\
            \cline{2-7}
&\footnotesize{(2,\ldots,2)}&\footnotesize{(3,\ldots,3)}&\footnotesize{(10,5,1,1,1)}&\!\scriptsize{$S(2,10,10,10)\dotsto S(2,3,10)$}\!&
    $\checkmark$&\footnotesize{\texttt{112}}\\
            \cline{2-7}
&\footnotesize{(2,\ldots,2)}&\footnotesize{(3,\ldots,3)}&\footnotesize{(8,4,4,1,1)}&\!\footnotesize{$S(2,2,8,8)\dotsto S(2,3,8)$}\!&
    \footnotesize{\textbf{Excep}}&\footnotesize{\texttt{113}}\\
            \cline{2-7}
&\footnotesize{(2,\ldots,2)}&\footnotesize{(3,\ldots,3)}&\footnotesize{(8,4,2,2,2)}&\!\footnotesize{$S(2,4,4,4)\dotsto S(2,3,8)$}\!&
    \footnotesize{\textbf{Excep}}&\footnotesize{\texttt{114}}\\
            \cline{2-7}
&\footnotesize{(2,\ldots,2)}&\footnotesize{(3,\ldots,3)}&\footnotesize{(10,2,2,2,2)}&\!\footnotesize{$S(5,5,5,5)\dotsto S(2,3,10)$}\!&
    \footnotesize{\textbf{Excep}}&\footnotesize{\texttt{115}}\\
            \cline{2-7}
&\footnotesize{(2,\ldots,2)}&\footnotesize{(4,4,4,4,2)}&\!\!\footnotesize{(5,5,5,1,1,1)}\!\!&\!\footnotesize{$S(2,5,5,5)\dotsto S(2,4,5)$}\!&
    \footnotesize{\textbf{Excep}}&\footnotesize{\texttt{116}}

\end{tabular}
\end{center}
\mycap{Continued from Table~\ref{hyp:S(4)-to-S(3):summary:48-82:tab}.\label{hyp:S(4)-to-S(3):summary:83-116:tab}}
\end{table}

\clearpage

\begin{table}
\begin{center}
\begin{tabular}{c|c|c|c|c|c|c}
$d$&$\Pi_1$&$\Pi_2$&$\Pi_3$&\footnotesize{Associated} $\Xtil\dotsto X$&\!\!\footnotesize{Realizable?}\!\!&$\#$\\ \hline\hline
\footnotesize{19}
&\footnotesize{(2,\ldots,2,1)}&\footnotesize{(3,\ldots,3,1)}&\footnotesize{(8,8,2,1)}&\!\footnotesize{$S(2,3,4,8)\dotsto S(2,3,8)$}\!&
    $\checkmark$&\footnotesize{\texttt{117}}\\ \hline
\footnotesize{20}
&\footnotesize{(2,\ldots,2)}&\footnotesize{(3,\ldots,3,1,1)}&\footnotesize{(8,8,2,2)}&\!\footnotesize{$S(3,3,4,4)\dotsto S(2,3,8)$}\!&
    $\checkmark$&\footnotesize{\texttt{118}}\\
            \cline{2-7}
&\footnotesize{(2,\ldots,2)}&\footnotesize{(3,\ldots,3,1,1)}&\footnotesize{(9,9,1,1)}&\!\footnotesize{$S(3,3,9,9)\dotsto S(2,3,9)$}\!&
    $\checkmark$&\footnotesize{\texttt{119}}\\
            \cline{2-7}
&\footnotesize{(2,\ldots,2)}&\!\!\footnotesize{(4,4,4,4,1,1,1,1)}\!\!&\footnotesize{(5,5,5,5)}&\!\footnotesize{$S(4,4,4,4)\dotsto S(2,4,5)$}\!&
    $\checkmark$&\footnotesize{\texttt{120}}\\ \hline
\footnotesize{21}
&\footnotesize{(2,\ldots,2,1)}&\footnotesize{(3,\ldots,3)}&\footnotesize{(9,9,1,1,1)}&\!\footnotesize{$S(2,9,9,9)\dotsto S(2,3,9)$}\!&
    $\checkmark$&\footnotesize{\texttt{121}}\\ \cline{2-7}
&\footnotesize{(2,\ldots,2,1)}&\footnotesize{(3,\ldots,3)}&\footnotesize{(8,8,2,2,1)}&\!\footnotesize{$S(2,4,4,8)\dotsto S(2,3,8)$}\!&
    \footnotesize{\textbf{Excep}}&\footnotesize{\texttt{122}}\\
            \cline{2-7}
&\footnotesize{(2,\ldots,2,1)}&\footnotesize{(3,\ldots,3,1,1,1)}&\footnotesize{(7,7,7)}&\!\footnotesize{$S(2,3,3,3)\dotsto S(2,3,7)$}\!&
    $\checkmark$&\footnotesize{\texttt{123}}\\ \hline
\footnotesize{22}
&\!\!\footnotesize{(2,\ldots,2,1,1)}\!\!&\footnotesize{(3,\ldots,3,1)}&\footnotesize{(7,7,7,1)}&\!\footnotesize{$S(2,2,3,7)\dotsto S(2,3,7)$}\!&
    $\checkmark$&\footnotesize{\texttt{124}}\\
            \cline{2-7}
&\footnotesize{(2,\ldots,2)}&\footnotesize{(3,\ldots,3,1)}&\footnotesize{(8,8,4,1,1)}&\!\footnotesize{$S(2,3,8,8)\dotsto S(2,3,8)$}\!&
    \footnotesize{\textbf{Excep}}&\footnotesize{\texttt{125}}\\
            \cline{2-7}
&\footnotesize{(2,\ldots,2)}&\footnotesize{(3,\ldots,3,1)}&\footnotesize{(8,8,2,2,2)}&\!\footnotesize{$S(3,4,4,4)\dotsto S(2,3,8)$}\!&
    \footnotesize{\textbf{Excep}}&\footnotesize{\texttt{126}}\\
            \cline{2-7}
&\footnotesize{(2,\ldots,2)}&\footnotesize{(4,4,4,4,4,1,1)}&\footnotesize{(5,5,5,5,1,1)}&\!\footnotesize{$S(4,4,5,5)\dotsto S(2,4,5)$}\!&
    $\checkmark$&\footnotesize{\texttt{127}}\\
            \hline

\footnotesize{24}
&\footnotesize{(2,\ldots,2)}&\footnotesize{(3,\ldots,3)}&\footnotesize{(8,8,2,2,2,2)}&\!\footnotesize{$S(4,4,4,4)\dotsto S(2,3,8)$}\!&
    $\checkmark$&\footnotesize{\texttt{128}}\\
            \cline{2-7}
&\footnotesize{(2,\ldots,2)}&\footnotesize{(3,\ldots,3)}&\footnotesize{(8,8,4,2,1,1)}&\!\footnotesize{$S(2,4,8,8)\dotsto S(2,3,8)$}\!&
    $\checkmark$&\footnotesize{\texttt{129}}\\ \cline{2-7}
&\footnotesize{(2,\ldots,2)}&\footnotesize{(3,\ldots,3)}&\footnotesize{(10,10,1,1,1,1)}&\!\!\scriptsize{$S(10,10,10,10)\dotsto S(2,3,10)$}\!\!&
    $\checkmark$&\footnotesize{\texttt{130}}\\
            \cline{2-7}
&\footnotesize{(2,\ldots,2)}&\footnotesize{(3,\ldots,3)}&\footnotesize{(9,9,3,1,1,1)}&\!\footnotesize{$S(3,9,9,9)\dotsto S(2,3,9)$}\!&
    \footnotesize{\textbf{Excep}}&\footnotesize{\texttt{131}}\\
            \cline{2-7}
&\footnotesize{(2,\ldots,2)}&\footnotesize{(4,\ldots,4)}&\footnotesize{(5,5,5,5,1,1,1,1)}&\!\footnotesize{$S(5,5,5,5)\dotsto S(2,4,5)$}\!&
    $\checkmark$&\footnotesize{\texttt{132}}\\
            \hline

\footnotesize{26}
&\footnotesize{(2,\ldots,2)}&\footnotesize{(3,\ldots,3,1,1)}&\footnotesize{(8,8,8,1,1)}&\!\footnotesize{$S(3,3,8,8)\dotsto S(2,3,8)$}\!&
    $\checkmark$&\footnotesize{\texttt{133}}\\ \hline

\footnotesize{27}
&\footnotesize{(2,\ldots,2,1)}&\footnotesize{(3,\ldots,3)}&\footnotesize{(8,8,8,1,1,1)}&\!\footnotesize{$S(2,8,8,8)\dotsto S(2,3,8)$}\!&
    $\checkmark$&\footnotesize{\texttt{134}}\\ \hline

\footnotesize{28}
&\footnotesize{(2,\ldots,2)}&\footnotesize{(3,\ldots,3,1)}&\footnotesize{(8,8,8,2,1,1)}&\!\footnotesize{$S(3,4,8,8)\dotsto S(2,3,8)$}\!&
    $\checkmark$&\footnotesize{\texttt{135}}\\
        \cline{2-7}
&\footnotesize{(2,\ldots,2)}&\!\!\footnotesize{(3,\ldots,3,1,1,1,1)}\!\!&\footnotesize{(7,7,7,7)}&\!\footnotesize{$S(3,3,3,3)\dotsto S(2,3,7)$}\!&
    $\checkmark$&\footnotesize{\texttt{136}}\\
            \hline

\footnotesize{29}
&\footnotesize{(2,\ldots,2,1)}&\footnotesize{(3,\ldots,3,1,1)}&\footnotesize{(7,7,7,7,1)}&\!\footnotesize{$S(2,3,3,7)\dotsto S(2,3,7)$}\!&
    $\checkmark$&\footnotesize{\texttt{137}}\\
            \hline

\footnotesize{30}
&\!\!\footnotesize{(2,\ldots,2,1,1)}\!\!&\footnotesize{(3,\ldots,3)}&\footnotesize{(7,7,7,7,1,1)}&\!\footnotesize{$S(2,2,7,7)\dotsto S(2,3,7)$}\!&
    $\checkmark$&\footnotesize{\texttt{138}}\\
            \cline{2-7}
&\footnotesize{(2,\ldots,2)}&\footnotesize{(3,\ldots,3)}&\footnotesize{(8,8,8,2,2,1,1)}&\!\footnotesize{$S(4,4,8,8)\dotsto S(2,3,8)$}\!&
    $\checkmark$&\footnotesize{\texttt{139}}\\ \hline

\footnotesize{36}
&\footnotesize{(2,\ldots,2)}&\footnotesize{(3,\ldots,3)}&\!\!\footnotesize{(8,8,8,8,1,1,1,1)}\!\!&\!\footnotesize{$S(8,8,8,8)\dotsto S(2,3,8)$}\!&
    $\checkmark$&\footnotesize{\texttt{140}}\\
            \cline{2-7}
&\footnotesize{(2,\ldots,2)}&\footnotesize{(3,\ldots,3,1,1,1)}&\footnotesize{(7,7,7,7,7,1)}&\!\footnotesize{$S(3,3,3,7)\dotsto S(2,3,7)$}\!&
    $\checkmark$&\footnotesize{\texttt{141}}\\
            \hline
\footnotesize{37}
&\footnotesize{(2,\ldots,2,1)}&\footnotesize{(3,\ldots,3,1)}&\footnotesize{(7,7,7,7,7,1,1)}&\!\footnotesize{$S(2,3,7,7)\dotsto S(2,3,7)$}\!&
    $\checkmark$&\footnotesize{\texttt{142}}\\
            \hline
\footnotesize{44}
&\footnotesize{(2,\ldots,2)}&\footnotesize{(3,\ldots,3,1,1)}&\footnotesize{(7,\ldots,7,1,1)}&\!\footnotesize{$S(3,3,7,7)\dotsto S(2,3,7)$}\!&
    $\checkmark$&\footnotesize{\texttt{143}}
            \\ \hline
\footnotesize{45}
&\footnotesize{(2,\ldots,2,1)}&\footnotesize{(3,\ldots,3)}&\footnotesize{(7,\ldots,7,1,1,1)}&\!\footnotesize{$S(2,7,7,7)\dotsto S(2,3,7)$}\!&
    $\checkmark$&\footnotesize{\texttt{144}}
            \\ \hline
\footnotesize{52}
&\footnotesize{(2,\ldots,2)}&\footnotesize{(3,\ldots,3,1)}&\footnotesize{(7,\ldots,7,1,1,1)}&\!\footnotesize{$S(3,7,7,7)\dotsto S(2,3,7)$}\!&
    $\checkmark$&\footnotesize{\texttt{145}}
            \\ \hline
\footnotesize{60}
&\footnotesize{(2,\ldots,2)}&\footnotesize{(3,\ldots,3)}&\!\!\footnotesize{(7,\ldots,7,1,1,1,1)}\!\!&\!\footnotesize{$S(7,7,7,7)\dotsto S(2,3,7)$}\!&
    $\checkmark$&\footnotesize{\texttt{146}}

\end{tabular}
\end{center}
\mycap{Continued from Table~\ref{hyp:S(4)-to-S(3):summary:83-116:tab}.\label{hyp:S(4)-to-S(3):summary:117-146:tab}}
\end{table}

\clearpage

\begin{table}
\begin{center}
\begin{tabular}{c|c|c|c|c|c|c}
$d$&$\Pi_1$&$\Pi_2$&$\Pi_3$&\footnotesize{Associated} $\Xtil\dotsto X$&\!\!\footnotesize{Realizable?}\!\!&$\#$\\ \hline\hline
\footnotesize{4}
&\footnotesize{(3,1)}&\footnotesize{(4)}&\footnotesize{(4)}&\footnotesize{$T(3)\dotsto S(3,4,4)$}&
    $\checkmark$&\footnotesize{\texttt{147}}\\ \hline

\footnotesize{5}
&\footnotesize{(2,2,1)}&\footnotesize{(5)}&\footnotesize{(5)}&\footnotesize{$T(2)\dotsto S(2,5,5)$}&
    $\checkmark$&\footnotesize{\texttt{148}}\\ \hline

\footnotesize{6}
&\footnotesize{(2,2,2)}&\footnotesize{(5,1)}&\footnotesize{(6)}&\footnotesize{$T(5)\dotsto S(2,5,6)$}&
    $\checkmark$&\footnotesize{\texttt{149}}\\ \cline{2-7}
&\footnotesize{(2,2,2)}&\footnotesize{(4,2)}&\footnotesize{(6)}&\footnotesize{$T(2)\dotsto S(2,4,6)$}&
    $\checkmark$&\footnotesize{\texttt{150}}\\ \cline{2-7}
&\footnotesize{(3,3)}&\footnotesize{(3,3)}&\footnotesize{(5,1)}&\footnotesize{$T(5)\dotsto S(3,3,5)$}&
    $\checkmark$&\footnotesize{\texttt{151}}\\ \cline{2-7}
&\footnotesize{(3,3)}&\footnotesize{(3,3)}&\footnotesize{(4,2)}&\footnotesize{$T(2)\dotsto S(3,3,4)$}&
    \footnotesize{\textbf{Excep}}&\footnotesize{\texttt{152}}\\ \hline

\footnotesize{8}
&\footnotesize{(2,2,2,2)}&\footnotesize{(4,4)}&\footnotesize{(7,1)}&\footnotesize{$T(7)\dotsto S(2,4,7)$}&
    $\checkmark$&\footnotesize{\texttt{153}}\\ \cline{2-7}
&\footnotesize{(2,2,2,2)}&\footnotesize{(4,4)}&\footnotesize{(6,2)}&\footnotesize{$T(3)\dotsto S(2,4,6)$}&
    $\checkmark$&\footnotesize{\texttt{154}}\\ \hline

\footnotesize{9}
&\footnotesize{(2,2,2,2,1)}&\footnotesize{(3,3,3)}&\footnotesize{(9)}&\footnotesize{$T(2)\dotsto S(2,3,9)$}&
    $\checkmark$&\footnotesize{\texttt{155}}\\ \cline{2-7}
&\footnotesize{(3,3,3)}&\footnotesize{(3,3,3)}&\footnotesize{(4,4,1)}&\footnotesize{$T(4)\dotsto S(3,3,4)$}&
    $\checkmark$&\footnotesize{\texttt{156}}\\ \hline

\footnotesize{10}
&\footnotesize{(2,2,2,2,2)}&\footnotesize{(4,4,2)}&\footnotesize{(5,5)}&\footnotesize{$T(2)\dotsto S(2,4,5)$}&
    $\checkmark$&\footnotesize{\texttt{157}}\\ \cline{2-7}
&\footnotesize{(2,2,2,2,2)}&\footnotesize{(3,3,3,1)}&\footnotesize{(10)}&\footnotesize{$T(3)\dotsto S(2,3,10)$}&
    $\checkmark$&\footnotesize{\texttt{158}}\\ \hline

\footnotesize{12}
&\footnotesize{(2,\ldots,2)}&\footnotesize{(3,3,3,3)}&\footnotesize{(11,1)}&\footnotesize{$T(11)\dotsto S(2,3,11)$}&
    $\checkmark$&\footnotesize{\texttt{159}}\\ \cline{2-7}
&\footnotesize{(2,\ldots,2)}&\footnotesize{(3,3,3,3)}&\footnotesize{(10,2)}&\footnotesize{$T(5)\dotsto S(2,3,10)$}&
    $\checkmark$&\footnotesize{\texttt{160}}\\ \cline{2-7}
&\footnotesize{(2,\ldots,2)}&\footnotesize{(3,3,3,3)}&\footnotesize{(9,3)}&\footnotesize{$T(3)\dotsto S(2,3,9)$}&
    $\checkmark$&\footnotesize{\texttt{161}}\\ \cline{2-7}
&\footnotesize{(2,\ldots,2)}&\footnotesize{(3,3,3,3)}&\footnotesize{(8,4)}&\footnotesize{$T(2)\dotsto S(2,3,8)$}&
    $\checkmark$&\footnotesize{\texttt{162}}\\ \hline

\footnotesize{16}
&\footnotesize{(2,\ldots,2)}&\footnotesize{(3,3,3,3,3,1)}&\footnotesize{(8,8)}&\footnotesize{$T(3)\dotsto S(2,3,8)$}&
    \footnotesize{\textbf{Excep}}&\footnotesize{\texttt{163}}\\ \cline{2-7}
&\footnotesize{(2,\ldots,2)}&\footnotesize{(4,4,4,4)}&\footnotesize{(5,5,5,1)}&\footnotesize{$T(5)\dotsto S(2,4,5)$}&
    \footnotesize{\textbf{Excep}}&\footnotesize{\texttt{164}}\\ \hline

\footnotesize{18}
&\footnotesize{(2,\ldots,2)}&\footnotesize{(3,\ldots,3)}&\footnotesize{(8,8,2)}&\footnotesize{$T(4)\dotsto S(2,3,8)$}&
    $\checkmark$&\footnotesize{\texttt{165}}\\ \hline

\footnotesize{21}
&\footnotesize{(2,\ldots,2,1)}&\footnotesize{(3,\ldots,3)}&\footnotesize{(7,7,7)}&\footnotesize{$T(2)\dotsto S(2,3,7)$}&
    \footnotesize{\textbf{Excep}}&\footnotesize{\texttt{166}}\\ \hline

\footnotesize{28}
&\footnotesize{(2,\ldots,2)}&\footnotesize{(3,\ldots,3,1)}&\footnotesize{(7,7,7,7)}&\footnotesize{$T(3)\dotsto S(2,3,7)$}&
    $\checkmark$&\footnotesize{\texttt{167}}\\ \hline

\footnotesize{36}
&\footnotesize{(2,\ldots,2)}&\footnotesize{(3,\ldots,3)}&\footnotesize{(7,7,7,7,7,1)}&\footnotesize{$T(7)\dotsto S(2,3,7)$}&
    \footnotesize{\textbf{Excep}}&\footnotesize{\texttt{168}}

\end{tabular}
\end{center}
\mycap{Candidates $T\argdotstobis{d:1}{(\Pi_1,\Pi_2,\Pi_3)}S$
with associated hyperbolic $T(\alpha)\dotsto S(p,q,r)$.\label{hyp:T(1)-to-S(3):summary:tab}}
\end{table}

\clearpage

\section{Enumeration of relevant candidate covers}
\label{enum:sec}

In this section we establish the following two results:

\begin{thm}\label{hyp:S(4)-to-S(3):enum:thm}
The candidate surface branched covers with associated hyperbolic orbifold candidate
$S(\alpha,\beta,\gamma,\delta)\dotsto S(p,q,r)$
are precisely the $146$ items listed in
Tables~\ref{hyp:S(4)-to-S(3):summary:1-12:tab} to~\ref{hyp:S(4)-to-S(3):summary:117-146:tab}.
\end{thm}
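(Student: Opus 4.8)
\emph{Sketch of the plan.} The assertion is the completeness of an explicit finite list, so the plan is to reduce the question to a finite combinatorial search and then carry it out. First I would translate everything into orbifold data. Since $X=S(p,q,r)$ has three cone points, any candidate surface branched cover whose associated orbifold cover is $\Xtil\dotsto X$ has exactly $n=3$ branching points, and since $\Xtil=S(\alpha,\beta,\gamma,\delta)$ has base the sphere $S$, the candidate reads $S\argdotstoter{d:1}{(\Pi_1,\Pi_2,\Pi_3)}S$. Writing $p_i$ for the l.c.m.\ of $\Pi_i$, the covering instructions say precisely this: over the $i$-th branching point, $\Pi_i$ consists of the numbers $p_i/\alpha$ for the cone orders $\alpha$ of $\Xtil$ assigned to that point (each such $\alpha$ dividing $p_i$, since every part of $\Pi_i$ divides $p_i$), together with $k_i\geq0$ further parts equal to $p_i$. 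In these terms the Riemann--Hurwitz identity $\chiorb(\Xtil)=d\myprod\chiorb(X)$ is equivalent to $\ell(\Pi)=d+2$, i.e.\ to $\sum_i k_i=d-2$; and, once the two orbifolds are fixed, the degree is forced: $d=\chiorb(\Xtil)/\chiorb(X)$.

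Next I would make the search finite. Any hyperbolic triangular orbifold satisfies $|\chiorb(S(p,q,r))|\geq 1/42$, attained at $S(2,3,7)$, whereas $|\chiorb(S(\alpha,\beta,\gamma,\delta))|=2-\tfrac1\alpha-\tfrac1\beta-\tfrac1\gamma-\tfrac1\delta<2$; hence $d<84$. Moreover, if $\Pi_i$ is a partition of $d$ with l.c.m.\ equal to $p_i$, then every maximal prime power $q^f$ dividing $p_i$ must divide some part of $\Pi_i$, so $q^f\leq d$; choosing one witnessing part per such prime power, and using that distinct prime powers are coprime (so a single part witnessing several of them is at least their product, hence at least their sum), one sees that the sum of the maximal prime-power divisors of $p_i$ is also $\leq d$. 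Thus for each of the finitely many admissible degrees only finitely many triples $(p,q,r)$ survive, and every cone order of $\Xtil$, being of the form $p_i/d_{ij}\leq p_i$, is bounded as well. So the problem is a finite computation.

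Then I would run the search, organised by the base orbifold. A short explicit list of hyperbolic $S(p,q,r)$ survives the above bounds. For each such $X$: enumerate the sphere orbifolds $\Xtil=S(\alpha,\beta,\gamma,\delta)\neq S(2,2,2,2)$ for which $d:=\chiorb(\Xtil)/\chiorb(X)$ is an integer $\geq 2$ and each of $\alpha,\beta,\gamma,\delta$ divides one of $p,q,r$; for each surviving triple $(X,\Xtil,d)$, run over the ways of distributing the four cone orders of $\Xtil$ among the three branching points respecting divisibility. Such a distribution determines $\Pi_1,\Pi_2,\Pi_3$ up to the multiplicities $k_i=\big(d-\sum p_i/\alpha\big)/p_i$; retain it precisely when every $k_i$ is a non-negative integer and, whenever $k_i=0$, the l.c.m.\ of the parts assigned to the $i$-th point equals $p_i$ (equivalently, the g.c.d.\ of the cone orders assigned there is $1$), and identify distributions that give the same (unordered) triple $\{\Pi_1,\Pi_2,\Pi_3\}$, i.e.\ those related by a symmetry of $(p,q,r)$. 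The surviving candidate covers should then be exactly the $146$ entries of Tables~\ref{hyp:S(4)-to-S(3):summary:1-12:tab}--\ref{hyp:S(4)-to-S(3):summary:117-146:tab}.

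The only conceptual input is the finiteness argument of the second step; the rest is a long but entirely elementary case analysis, and the real difficulty is organisational — avoiding an omitted case or a double count. To control this I would sharpen the degree bound orbifold by orbifold (using $d<2/|\chiorb(X)|$), enumerate the admissible cone-order multisets of $\Xtil$ for each fixed $X$ in a fixed order (say by decreasing $\tfrac1\alpha+\tfrac1\beta+\tfrac1\gamma+\tfrac1\delta$), and independently re-verify $\ell(\Pi)=d+2$ for every item produced as a consistency check; tallying the survivors then gives the number $146$.
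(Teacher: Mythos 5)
Your reduction is sound and is in spirit the same as the paper's: translate the candidate into orbifold data (three branching points, $p_i={\rm l.c.m.}(\Pi_i)$, four upstairs cone orders distributed among the three points with $k_i$ extra parts equal to $p_i$, the g.c.d.\ condition when $k_i=0$, Riemann--Hurwitz $\Leftrightarrow\ \ell(\Pi)=d+2$, $d=\chiorb(\Xtil)/\chiorb(X)$), bound the degree, and run a finite search. Your global bound $d<2\cdot 42=84$ and your finiteness argument for the triples $(p,q,r)$ (sum of maximal prime-power divisors of each $p_i$ at most $d$) are correct. The paper organises the same search differently: for $d\leqslant 12$ it enumerates partitions of $d$ directly, using the two numerical conditions $\ell(\Pi_1)+\ell(\Pi_2)+\ell(\Pi_3)=d+2$ and $c(\Pi_1)+c(\Pi_2)+c(\Pi_3)=4$ and only afterwards discarding non-hyperbolic $X$; for $d\geqslant 13$ it argues as you do, from $\Xtil\dotsto X$, but first pins $(p,q,r)$ down via $-\chiorb(X)<\tfrac2{13}$ (so only $(2,3,r)$ with $r\leqslant 77$, $(2,4,r)$ with $r\leqslant 10$, and four sporadic triples survive) and then prunes with the sharper bound $d\leqslant d_{\max}(p,q,r)=\frac{2-\frac4r}{1-\frac1p-\frac1q-\frac1r}$ and its congruence refinements $d_{\max}^{\,\equiv k\,(n)}$. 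The degree-split buys something real: base orbifolds such as $S(2,3,30)$ or $S(2,3,20)$ occur only through small-degree partitions with large l.c.m., and the direct partition count for $d\leqslant 12$ handles them without ever having to list such triples a priori.

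The one weak point in your plan is the assertion that ``a short explicit list of hyperbolic $S(p,q,r)$ survives the above bounds.'' Your prime-power-sum bound is far too loose for that: with $d<84$ it admits, e.g., $p_i=2\cdot3\cdot5\cdot7\cdot11\cdot13\cdot17\cdot19$ (prime-power sum $77$), so the a priori list of triples is finite but enormous, and the refinement $d<2/|\chiorb(X)|$ alone does not cut it down to something checkable by hand. To make the by-base-orbifold search executable you would need something playing the role of the paper's $d_{\max}$ lemma and congruence lemmas (or a computer). Related to this, the decisive step --- actually carrying out the case analysis and verifying that exactly the $146$ listed items survive, which is the entire content of the statement --- is only announced in your proposal, whereas the paper performs representative cases in detail (e.g.\ $d=10$, $X=S(2,4,6)$, $13\leqslant r\leqslant 14$) and defers the remainder to its documented companion notes. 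So: correct strategy and correct criteria, but the quantitative tools that make the enumeration feasible, and the enumeration itself, are missing as written.
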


\begin{thm}\label{hyp:T(1)-to-S(3):enum:thm}
The candidate surface branched covers with associated hyperbolic orbifold candidate
$T(\alpha)\dotsto S(p,q,r)$ are precisely the $22$ items listed in
Table~\ref{hyp:T(1)-to-S(3):summary:tab}.
\end{thm}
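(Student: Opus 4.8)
The plan is to carry out an exhaustive but entirely elementary enumeration, organised around the rigid structure that the hypothesis $\Xtil=T(\alpha)$ forces on the three partitions. First note that, since $X=S(p,q,r)$ has $n=3$ cone points, every candidate to be listed has exactly three branching points, hence the form $\Sigmatil\argdotstoter{d:1}{(\Pi_1,\Pi_2,\Pi_3)}S$; and since $\Xtil=T(\alpha)$ has underlying surface the torus, so does $\Sigmatil$, so $\chi(\Sigmatil)=0$ and the Riemann--Hurwitz formula~(\ref{RH:general:eq}) reduces to $\ell(\Pi)=d$. Next, by the very construction of the preferred associated orbifold cover, the cone orders of $\Xtil$ are precisely the numbers $p_{ij}=p_i/d_{ij}$ that are $>1$. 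As $\Xtil$ has a single cone point, of order $\alpha$, each $\Pi_i$ can produce at most one such value: either $\Pi_i$ is \emph{constant} (all parts equal, hence equal to $p_i$) and produces none, or $\Pi_i=(a,c,\dots,c)$ with $a\mid c$ and $a<c$, in which case $p_i=c$ and it produces the single value $c/a$. Exactly one $\Pi_i$ is of the second type; after relabelling the branching points we may write $\Pi_1=(a,p,\dots,p)$ with $m_1-1\geq 1$ copies of $p$, so that $a\mid p$, $1\leq a<p$ and $\alpha=p/a\geq 2$, while $\Pi_2=(q,\dots,q)$ and $\Pi_3=(r,\dots,r)$ with $q\mid d$ and $r\mid d$. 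Thus the whole datum is encoded by a quadruple $(d,p,q,r)$ together with the divisor $a$ of $p$.

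Now I would collect the numerical constraints. Counting lengths turns $\ell(\Pi)=d$ into $m_1=d-\tfrac dq-\tfrac dr=d\bigl(1-\tfrac1q-\tfrac1r\bigr)\geq 2$, so in particular $\{q,r\}\neq\{2,2\}$. From $d=a+(m_1-1)p$ and $1\leq a<p$ we get the two-sided bound $\tfrac{d}{m_1}<p\leq\tfrac{d-1}{m_1-1}$. Hyperbolicity of $X$ gives $\tfrac1p+\tfrac1q+\tfrac1r<1$, which one checks forces at most one of $p,q,r$ to equal $2$. Finally the orbifold Riemann--Hurwitz identity~(\ref{RH:orb:eq}), $\chiorb(\Xtil)=d\myprod\chiorb(X)$, reads
\[
d\Bigl(1-\frac1p-\frac1q-\frac1r\Bigr)=\frac{\alpha-1}{\alpha}\in\Bigl[\tfrac12,1\Bigr),
\]
so $d$ is comparable to $1/\lvert\chiorb(X)\rvert$, and since $\mathrm{lcm}(q,r)\mid d$ we obtain $\mathrm{lcm}(q,r)\myprod\lvert\chiorb(X)\rvert<1$.

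From here the enumeration is a finite, if somewhat fiddly, search. Assuming $q\leq r$ one has $\mathrm{lcm}(q,r)\geq r$, so the last inequality gives $1-\tfrac1p-\tfrac1q-\tfrac1r\leq\tfrac1r$, i.e.\ $r\leq 2/(1-\tfrac1p-\tfrac1q)$; since at most one of $p,q$ is $2$, this bounds $r$ (hence also $q$) by an absolute constant, leaving a short explicit list of pairs $(q,r)$. For each such pair the same inequality also bounds $p$, \emph{except} in the three ``flat'' cases $(q,r)\in\{(2,3),(2,4),(3,3)\}$, where $1-\tfrac1q-\tfrac1r=1/\mathrm{lcm}(q,r)$ and the bound on $p$ degenerates; those are handled directly, by observing that the window $\tfrac{d}{m_1}<p\leq\tfrac{d-1}{m_1-1}$ has both endpoints tending to $1/(1-\tfrac1q-\tfrac1r)$ as $d\to\infty$, so it contains no integer once $d$ exceeds an explicit bound, while $m_1\geq 2$ and $\mathrm{lcm}(q,r)\mid d$ then pin $d$ to finitely many values. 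In every case one is left with finitely many quadruples $(d,p,q,r)$; running through them, discarding those for which no divisor $a$ of $p$ with $1\leq a<p$ satisfies $d=a+(m_1-1)p$, or for which $X$ is not hyperbolic, and translating each survivor back into surface data, one obtains exactly the $22$ items of Table~\ref{hyp:T(1)-to-S(3):summary:tab} (up to the obvious reorderings of $(\Pi_1,\Pi_2,\Pi_3)$ and of $q,r$). Conversely, for each of those $22$ items one checks directly that the preferred associated orbifold cover has the form $T(\alpha)\dotsto S(p,q,r)$ with $X$ hyperbolic, so each is indeed a candidate surface branched cover to be listed; this proves the stated equality.

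The only genuinely delicate point is the finiteness of the search. The inequality $d\myprod\lvert\chiorb(X)\rvert<1$ alone bounds neither $d$ nor $p$, and in the flat configurations the degree is a priori unbounded; what rescues the argument is the constraint $m_1\geq 2$ --- equivalently $\alpha>1$, i.e.\ that the special partition $\Pi_1$ is genuinely non-constant --- since it is exactly this that makes the admissible window for the repeated part $p$ shrink with $d$ and thereby forces the case analysis to terminate. Everything else is bookkeeping, and, as noted in the Remark following Theorem~\ref{hyp:T(1)-to-S(3):summary:thm}, uses no sophisticated techniques.
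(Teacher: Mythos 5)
Your proposal is correct, and it reaches the theorem by a genuinely different organization of the finite search than the paper's. Both arguments rest on the same structural facts: Riemann--Hurwitz gives $\ell(\Pi_1)+\ell(\Pi_2)+\ell(\Pi_3)=d$, and the single cone point of $T(\alpha)$ forces exactly one partition of the form $(a,p,\dots,p)$ with $a\mid p$, $a<p$, the other two being constant, $(q,\dots,q)$ and $(r,\dots,r)$ with $q,r\mid d$. The paper then splits by degree: for $d\leqslant 17$ it enumerates directly degree by degree (showing $d=12$ and deferring the rest to its companion notes), while for $d\geqslant 18$ the inequality $\frac{17}{18}<\frac1p+\frac1q+\frac1r<1$ pins $X$ down to $S(2,4,5)$, $S(2,3,7)$, $S(2,3,8)$, which are handled by hand. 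You instead classify by the pair $(q,r)$ of constant cone orders: from $\mathrm{lcm}(q,r)\mid d$ and $d\cdot|\chiorb(X)|=1-\frac1\alpha<1$ you get the absolute bound $r<2/(1-\frac1p-\frac1q)<12$, and then either $\mathrm{lcm}(q,r)(1-\frac1q-\frac1r)\geqslant 2$, which bounds $p$ (namely $p<\mathrm{lcm}(q,r)$) and hence $d$, or $(q,r)\in\{(2,3),(2,4),(3,3)\}$, where your window $\frac d{m_1}<p\leqslant\frac{d-1}{m_1-1}$ collapses onto $\mathrm{lcm}(q,r)$ and forces $d\leqslant\mathrm{lcm}(q,r)^2$, i.e.\ $d\leqslant 36,16,9$ respectively. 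This buys uniform a priori bounds and avoids the paper's degree-by-degree sweep for small $d$; conversely the paper's large-degree reduction to three triangle types is quicker where it applies. In both treatments the terminal bookkeeping --- running through the finitely many $(d,p,q,r,a)$ and matching the outcome with Table~\ref{hyp:T(1)-to-S(3):summary:tab} --- is asserted rather than fully displayed (the paper defers to~\cite{hyp1:proofs}); I spot-checked your scheme on several pairs, e.g.\ $(q,r)=(2,3)$ yields exactly items \texttt{159}--\texttt{162}, \texttt{165}, \texttt{168}, and $(q,r)=(3,7)$, $(2,7)$ yield \texttt{166}, \texttt{167}, so it does recover the table. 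One cosmetic remark: the hypothesis $q,r\geqslant 2$ (i.e.\ that no partition is $(1,\dots,1)$) is implicitly needed for $X$ to be a triangular orbifold, but as you in effect use, it is automatic once hyperbolicity of $X$ is imposed.
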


\dimo{hyp:S(4)-to-S(3):enum:thm}
Given a degree $d$ and three partitions $\Pi_1,\Pi_2,\Pi_3$ of $d$ we recall that $\ell(\Pi_i)$ denotes
the length of $\Pi_i$, and we note that
the Riemann-Hurwitz formula~(\ref{RH:general:eq}) reads
\begin{equation}
\ell(\Pi_1)+\ell(\Pi_2)+\ell(\Pi_3)=d+2\label{RH:S-to-S(3):eq}
\end{equation}
in this case, because $\Sigmatil=\Sigma$ is the sphere $S$. In addition we define
$c(\Pi_i)$ as the number of entries in
$\Pi_i$ which are different from ${\rm l.c.m.}(\Pi_i)$.
We must then find those $d$ and $\Pi_1,\Pi_2,\Pi_3$
satisfying~(\ref{RH:S-to-S(3):eq}), the relation
\begin{equation}
c(\Pi_1)+c(\Pi_2)+c(\Pi_3)=4\label{c=4:eq}
\end{equation}
and such that for the associated candidate
$\Xtil\dotsto X$  one has that $X$ (or, equivalently, $\Xtil$) is hyperbolic.
We begin with the following:

\begin{prop}\label{hyp:S(4)-to-S(3):d_leq_12:prop}
The relevant degrees $d$ and partitions
$\Pi_1,\Pi_2,\Pi_3$ with $d\leqslant 12$ are the $82$ items listed in Tables~\ref{hyp:S(4)-to-S(3):summary:1-12:tab}
to~\ref{hyp:S(4)-to-S(3):summary:48-82:tab}.
\end{prop}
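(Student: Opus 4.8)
The plan is a systematic finite enumeration of all $(d,\Pi_1,\Pi_2,\Pi_3)$ with $d\leqslant12$ simultaneously satisfying the Riemann--Hurwitz relation~(\ref{RH:S-to-S(3):eq}), the cone-point count~(\ref{c=4:eq}), and hyperbolicity of $X=S(p_1,p_2,p_3)$, where $p_i={\rm l.c.m.}(\Pi_i)$. The last condition just says that each $p_i\geqslant2$ and that the unordered triple $(p_1,p_2,p_3)$ is none of $(2,2,n)$, $(2,3,3)$, $(2,3,4)$, $(2,3,5)$, $(2,3,6)$, $(2,4,4)$, $(3,3,3)$, the spherical and Euclidean triangle orbifolds; in particular at most one $p_i$ can equal $2$. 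Since $\ell(\Pi_i)\leqslant d-1$ whenever $p_i\geqslant2$, formula~(\ref{RH:S-to-S(3):eq}) already gives $d\geqslant3$, and the search then runs over the degrees $d=3,\dots,12$ (only $d\geqslant5$ will actually produce anything).

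I would organize the search by how the four cone points of $\Xtil$ are distributed among the three branch points, that is, by the partition of $4$ as $c(\Pi_1)+c(\Pi_2)+c(\Pi_3)$ with $c(\Pi_1)\geqslant c(\Pi_2)\geqslant c(\Pi_3)$: the cases are $(4,0,0)$, $(3,1,0)$, $(2,2,0)$, $(2,1,1)$. The useful remark is that every entry of $\Pi_i$ divides $p_i$, so a branch point with $c(\Pi_i)=0$ has $\Pi_i=(p_i,\dots,p_i)$, forcing $p_i\mid d$ and $\ell(\Pi_i)=d/p_i$; while a branch point with $c(\Pi_i)=k\geqslant1$ has $\ell(\Pi_i)-k$ entries equal to $p_i$ and $k$ entries that are proper divisors of $p_i$, whence
$$d=\big(\ell(\Pi_i)-c(\Pi_i)\big)\,p_i+s_i,$$
where $s_i$ is the sum of those $k$ proper-divisor parts (and $p_i\leqslant d$ as soon as some part equals $p_i$, so only finitely many $p_i$ occur). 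For each $d$ and each splitting of $d+2$ as $\ell(\Pi_1)+\ell(\Pi_2)+\ell(\Pi_3)$ compatible with the chosen $c$-values, these relations leave only a few possibilities for $(p_1,p_2,p_3)$; one discards the non-hyperbolic triples, lists the finitely many partition triples realizing each surviving datum, reads off the associated candidate orbifold cover $\Xtil\dotsto X$, and checks $\chiorb(\Xtil)=d\cdot\chiorb(X)$ (formula~(\ref{RH:orb:eq})) as a consistency test.

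Carrying this out yields precisely the $82$ items of Tables~\ref{hyp:S(4)-to-S(3):summary:1-12:tab} to~\ref{hyp:S(4)-to-S(3):summary:48-82:tab}. Everything here is elementary; the one genuine difficulty is organizational, namely to be sure that within each of the four cases and each degree no admissible partition triple is missed, that triples are not counted twice under permutation of the three branch points or of the parts inside a partition, and that $X$ is correctly identified and tested against the short list of non-hyperbolic triangle orbifolds above.
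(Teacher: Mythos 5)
Your proposal is correct and essentially coincides with the paper's own proof: for each $d\leqslant 12$ one lists the partitions $\Pi$ of $d$ with $c(\Pi)\leqslant 4$ (excluding $(1,\ldots,1)$), selects the unordered triples satisfying $\ell(\Pi_1)+\ell(\Pi_2)+\ell(\Pi_3)=d+2$ and $c(\Pi_1)+c(\Pi_2)+c(\Pi_3)=4$, and discards those with $\frac1p+\frac1q+\frac1r\geqslant1$; your bookkeeping via the distributions $(4,0,0)$, $(3,1,0)$, $(2,2,0)$, $(2,1,1)$ of the $c$-values and the relation $d=(\ell_i-c_i)p_i+s_i$ is only a mild reorganization of the same finite check. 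As in the paper (which carries out only $d=10$ explicitly and defers the remaining degrees to~\cite{hyp1:proofs}), the real content is performing the enumeration exhaustively and without double counting.
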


\begin{proof}
For each $d$ between $2$ and $12$ we must:
\begin{itemize}
\item[(\textbf{a})] List all the partitions $\Pi$ of $d$ such that $c(\Pi)\leqslant 4$, excluding $(1,\ldots,1)$;
\item[(\textbf{b})] Select an unordered
triple $\Pi_1,\Pi_2,\Pi_3$ meeting conditions~(\ref{RH:S-to-S(3):eq}) and~(\ref{c=4:eq});
\item[(\textbf{c})] Discard the triples such that the sum of reciprocals of ${\rm l.c.m.}(\Pi_i)$ for $i=1,2,3$ is greater than or equal to $1$.
\end{itemize}
Note that excluding $(1,\ldots,1)$ in (\textbf{a}) guarantees that in the orbifold cover $\Xtil\to X$
associated to $S\argdotstoter{d:1}{(\Pi_1,\Pi_2,\Pi_3)}S$ one has $X=S(p,q,r)$. Then (\textbf{c}) reads
$\frac1p+\frac1q+\frac1r<1$ and means that $X$ is hyperbolic.

Achieving tasks (\textbf{a}), (\textbf{b}), and (\textbf{c}) is a matter that
only requires a little time and care, and that
can also safely be carried out by computer. As an only example, we make the argument
explicit for $d=10$, addressing the reader to~\cite{hyp1:proofs} for the other cases.
We first show in Table~\ref{good:partitions:d=10:tab} the $28$
partitions $\Pi$ of $d=10$ with $c(\Pi)\leqslant 4$.

\begin{table}
\begin{center}
\begin{tabular}{c||c|c|c|c|c|c}\footnotesize{
$\Pi$       }&\footnotesize{ (10)      }&\footnotesize{ (9,1)     }&\footnotesize{ (8,2)      }&\footnotesize{ (7,3)      }&\footnotesize{ (6,4)      }&\footnotesize{ (5,5) }\\ \hline{
$\ell$      }&\footnotesize{ 1         }&\footnotesize{ 2         }&\footnotesize{ 2          }&\footnotesize{  2         }&\footnotesize{  2         }&\footnotesize{  2 }\\ \hline{
$c$         }&\footnotesize{ 0         }&\footnotesize{ 1         }&\footnotesize{ 1          }&\footnotesize{  2         }&\footnotesize{  2         }&\footnotesize{  0 }\\ \hline\hline{

$\Pi$       }&\footnotesize{ (8,1,1)   }&\footnotesize{ (7,2,1)   }&\footnotesize{ (6,3,1)    }&\footnotesize{ (6,2,2)    }&\footnotesize{ (5,4,1)    }&\footnotesize{ (5,3,2) }\\ \hline{
$\ell$      }&\footnotesize{  3        }&\footnotesize{ 3         }&\footnotesize{ 3          }&\footnotesize{  3         }&\footnotesize{  3         }&\footnotesize{  3 }\\ \hline{
$c$         }&\footnotesize{  2        }&\footnotesize{ 3         }&\footnotesize{ 2          }&\footnotesize{  2         }&\footnotesize{  3         }&\footnotesize{  3 }\\ \hline\hline{

$\Pi$       }&\footnotesize{ (4,4,2)   }&\footnotesize{ (4,3,3)   }&\footnotesize{ (7,1,1,1) }&\footnotesize{ (6,2,1,1) }&\footnotesize{ (5,3,1,1)  }&\footnotesize{ (5,2,2,1)  }\\ \hline{
$\ell$      }&\footnotesize{  3        }&\footnotesize{ 3         }&\footnotesize{  4        }&\footnotesize{ 4         }&\footnotesize{ 4          }&\footnotesize{  4         }\\ \hline{
$c$         }&\footnotesize{  1        }&\footnotesize{ 3         }&\footnotesize{  3        }&\footnotesize{ 3         }&\footnotesize{ 4          }&\footnotesize{  4         }\\ \hline\hline{

$\Pi$       }&\footnotesize{ (4,4,1,1)  }&\footnotesize{ (4,3,2,1) }&\footnotesize{ (4,2,2,2)  }&\footnotesize{ (3,3,3,1)  }&\footnotesize{ (3,3,2,2)  }&\footnotesize{ (6,1,1,1,1) }\\ \hline{
$\ell$      }&\footnotesize{  4         }&\footnotesize{  4 }       &\footnotesize{ 4          }&\footnotesize{  4         }&\footnotesize{  4         }&\footnotesize{  5 }\\ \hline{
$c$         }&\footnotesize{  2         }&\footnotesize{  4 }       &\footnotesize{ 3          }&\footnotesize{  1         }&\footnotesize{  4         }&\footnotesize{  4 }\\ \hline\hline{

$\Pi$       }&\footnotesize{(4,2,2,1,1)}&\footnotesize{(2,\ldots,2)}&\footnotesize{ (3,3,1,1,1,1)}&\footnotesize{ (2,2,2,2,1,1)    }&\footnotesize{ (2,2,2,1,1,1,1)  }&\footnotesize{ }\\ \hline{
$\ell$      }&\footnotesize{  5        }&\footnotesize{ 5         } &\footnotesize{ 6             }&\footnotesize{  6               }&\footnotesize{  7               }&\footnotesize{ }\\ \hline{
$c$         }&\footnotesize{  4        }&\footnotesize{ 0         } &\footnotesize{ 4             }&\footnotesize{  2               }&\footnotesize{  4               }&
\end{tabular}
\end{center}
\mycap{The partitions $\Pi$ of $d=10$ with $c(\Pi)\leqslant 4$\label{good:partitions:d=10:tab}}
\end{table}

Analyzing these partitions we see that the possible values of the pairs $(c,\ell)$ and the numbers of partitions giving
each of them are those in Table~\ref{pairs:(c,l):d=10:tab}.

\begin{table}
\begin{center}
\begin{tabular}
{l      ||l c c|c c c|c c c c|c c|c c c c}
$c$      &0& & &1& & &2& & & &3& &4& & & \\ \hline
$\ell$   &1&2&5&2&3&4&2&3&4&6&3&4&4&5&6&7\\ \hline\hline
$\#$     &1&1&1&2&1&1&2&3&1&1&4&3&4&2&1&1
\end{tabular}
\end{center}
\mycap{Values of $(c,\ell)$ for the partitions $\Pi$ of $d=10$ with $c(\Pi)\leqslant 4$, and
numbers of $\Pi$'s giving each $(c,\ell)$.\label{pairs:(c,l):d=10:tab}}
\end{table}

\begin{table}
\begin{center}
\begin{tabular}
{c              | c                 | c                 ||c}
$(c_1,\ell_1)$  & $(c_2,\ell_2)$    & $(c_3,\ell_3)$    & $\#$  \\ \hline\hline
(0,5)           & (0,2)             & (4,5)             & 2     \\ \hline
(0,5)           & (0,1)             & (4,6)             & 1     \\ \hline
(0,5)           & (1,4)             & (3,3)             & 4     \\ \hline
(0,5)           & (1,3)             & (3,4)             & 3     \\ \hline
(0,5)           & (2,4)             & (2,3)             & 3     \\ \hline
(0,2)           & (2,4)             & (2,6)             & 1     \\ \hline
(1,4)           & (1,4)             & (2,4)             & 1     \\ \hline
(1,2)           & (1,4)             & (2,6)             & 2     \\ \hline
(1,3)           & (1,3)             & (2,6)             & 1
\end{tabular}
\end{center}
\mycap{Triples $(c,\ell)$ summing up to $(4,12)$, and numbers of different choices for
the corresponding partitions.\label{good:triples:d=10:tab}}
\end{table}

\begin{table}
\begin{center}
\begin{tabular}{c|c|c|c|c}
$\Pi_1$      & $\Pi_2$ & $ \Pi_3$ & Associated cover & Geometry \\ \hline\hline
(2,2,2,2,2) & (5,5) & (6,1,1,1,1)  & $S(6,6,6,6)\dotsto S(2,5,6)$ & $\matH$ \\     
(2,2,2,2,2) & (5,5) & (4,2,2,1,1)  & $S(2,2,4,4)\dotsto S(2,4,5)$ & $\matH$ \\ \hline    

(2,2,2,2,2) & (10) & (3,3,1,1,1,1) & $S(3,3,3,3)\dotsto S(2,3,10)$ & $\matH$ \\ \hline     

(2,2,2,2,2) & (3,3,3,1) & (7,2,1)  & $S(2,3,7,14)\dotsto S(2,3,14)$ & $\matH$ \\   
(2,2,2,2,2) & (3,3,3,1) & (5,4,1)  & $S(3,4,5,20)\dotsto S(2,3,20)$ & $\matH$ \\   
(2,2,2,2,2) & (3,3,3,1) & (5,3,2)  & $S(3,6,10,15)\dotsto S(2,3,30)$ & $\matH$ \\  
(2,2,2,2,2) & (3,3,3,1) & (4,3,3)  & $S(3,3,4,4)\dotsto S(2,3,12)$ & $\matH$ \\ \hline  

(2,2,2,2,2) & (4,4,2) & (7,1,1,1)  & $S(2,7,7,7)\dotsto S(2,4,7)$ & $\matH$ \\    
(2,2,2,2,2) & (4,4,2) & (6,2,1,1)  & $S(2,3,6,6)\dotsto S(2,4,6)$ & $\matH$ \\    
(2,2,2,2,2) & (4,4,2) & (4,2,2,2)  & $S(2,2,2,2)\dotsto S(2,4,4)$ & $\matE$ \\ \hline    

(2,2,2,2,2) & (4,4,1,1) & (8,1,1)  & $S(4,4,8,8)\dotsto S(2,4,8)$ & $\matH$ \\     
(2,2,2,2,2) & (4,4,1,1) & (6,3,1)  & $S(2,4,4,6)\dotsto S(2,4,6)$ & $\matH$ \\     
(2,2,2,2,2) & (4,4,1,1) & (6,2,2)  & $S(3,3,4,4)\dotsto S(2,4,6)$ & $\matH$ \\ \hline    

(5,5) & (4,4,1,1) & (2,2,2,2,1,1)   & $S(2,2,4,4)\dotsto S(2,4,5)$ & $\matH$ \\ \hline    

(3,3,3,1) & (3,3,3,1) & (4,4,1,1)   & $S(3,3,4,4)\dotsto S(3,3,4)$ & $\matH$ \\ \hline    

(8,2) & (3,3,3,1) & (2,2,2,2,1,1)    & $S(2,2,3,4)\dotsto S(2,3,8)$ & $\matH$ \\     
(9,1) & (3,3,3,1) & (2,2,2,2,1,1)    & $S(2,2,3,9)\dotsto S(2,3,9)$ & $\matH$ \\ \hline    

(4,4,2) & (4,4,2) & (2,2,2,2,1,1)  & $S(2,2,2,2)\dotsto S(2,4,4)$ & $\matE$     
\end{tabular}
\end{center}
\mycap{Partitions of $d=10$ giving rise to candidate orbifold covers of the form
$S(\alpha,\beta,\gamma,\delta)\dotsto S(p,q,r)$ and the corresponding geometries.\label{all:d=10:candidates:tab}}
\end{table}

To achieve task (\textbf{b}) we must now select all possible unordered triples of partitions such that the corresponding
$(c,\ell)$'s sum up to $(4,12)$, which can be done in the ways described in Table~\ref{all:d=10:candidates:tab}.
The table also contains the type geometry of $X$ and $\Xtil$ in the corresponding candidate orbifold
cover. Task (\textbf{c}) corresponds to discarding non-$\matH$ geometries, after which we get the
16 items with numbers \texttt{43} through \texttt{58} in Tables~\ref{hyp:S(4)-to-S(3):summary:13-47:tab}
and~\ref{hyp:S(4)-to-S(3):summary:48-82:tab}.
\end{proof}

We now turn to the following:

\begin{prop}\label{hyp:S(4)-to-S(3):d_geq_13:prop}
The degrees $d$ and partitions $\Pi_1,\Pi_2,\Pi_3$ relevant to Theorem~\ref{hyp:S(4)-to-S(3):enum:thm}
with $d\geqslant 13$ are the $64$ items listed in Tables~\ref{hyp:S(4)-to-S(3):summary:83-116:tab}
and~\ref{hyp:S(4)-to-S(3):summary:117-146:tab}.
\end{prop}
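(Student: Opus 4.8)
The plan is to follow verbatim the scheme of the proof of Proposition~\ref{hyp:S(4)-to-S(3):d_leq_12:prop}, carrying out tasks~(\textbf{a}), (\textbf{b}) and~(\textbf{c}) for each relevant degree; the one genuinely new ingredient needed is an a priori upper bound on $d$, so that only finitely many degrees must be examined.

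First I would prove that $d\leqslant 84$. Put $L_i={\rm l.c.m.}(\Pi_i)$, so that $X=S(L_1,L_2,L_3)$; since $\Pi_i\neq(1,\ldots,1)$ we have $L_i\geqslant 2$, and every entry of $\Pi_i$ divides $L_i$. Writing $a_i$ for the number of entries of $\Pi_i$ equal to $L_i$, we have $\ell(\Pi_i)=a_i+c(\Pi_i)$, while $d\geqslant a_i\myprod L_i$ since the other entries are positive; hence $\ell(\Pi_i)\leqslant c(\Pi_i)+d/L_i$. Summing over $i$ and using~(\ref{RH:S-to-S(3):eq}) and~(\ref{c=4:eq}) gives
\begin{equation*}
d+2=\sum_{i=1}^{3}\ell(\Pi_i)\leqslant 4+d\left(\frac1{L_1}+\frac1{L_2}+\frac1{L_3}\right),
\end{equation*}
that is, $d\myprod\big(1-\tfrac1{L_1}-\tfrac1{L_2}-\tfrac1{L_3}\big)\leqslant 2$. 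Since $X$ is hyperbolic the bracket is positive, and as $\tfrac1{L_1}+\tfrac1{L_2}+\tfrac1{L_3}$ attains its largest value $\tfrac{41}{42}$ over hyperbolic triples at $(2,3,7)$, the bracket is at least $\tfrac1{42}$; therefore $d\leqslant 84$. The same inequality in fact shows that a fixed hyperbolic $S(L_1,L_2,L_3)$ can occur only for $d\leqslant 2\big(1-\tfrac1{L_1}-\tfrac1{L_2}-\tfrac1{L_3}\big)^{-1}$, a bound that already drops below $13$ for all but finitely many triples and is quite small for most of the remaining ones.

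With $13\leqslant d\leqslant 84$, the rest is the enumeration itself: for each such $d$, list the partitions $\Pi$ of $d$ with $c(\Pi)\leqslant 4$ other than $(1,\ldots,1)$; tabulate the pairs $(c(\Pi),\ell(\Pi))$; select the unordered triples whose pairs add up to $(4,d+2)$; write out the corresponding partitions and associated orbifold covers; and keep only those with $\tfrac1{L_1}+\tfrac1{L_2}+\tfrac1{L_3}<1$, i.e.\ with $X$ hyperbolic. This is exactly the bookkeeping already performed for $d\leqslant 12$, only longer, so I would run it by computer, record the complete output in~\cite{hyp1:proofs}, and write out by hand one or two representative degrees (say $d=24$, where a handful of triples survive, and a large degree such as $d=60$, where only item~\texttt{146} does). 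The outcome is the $64$ entries numbered \texttt{83} through \texttt{146} in Tables~\ref{hyp:S(4)-to-S(3):summary:83-116:tab} and~\ref{hyp:S(4)-to-S(3):summary:117-146:tab}.

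The hard part will be organizational rather than conceptual: the number of partitions of $d$ with $c(\Pi)\leqslant 4$ grows rapidly, so the triple search in task~(\textbf{b}) becomes awkward to carry out purely by hand. What will keep it manageable is the hyperbolicity condition of task~(\textbf{c}): through the degree bound above it forces $\tfrac1{L_1}+\tfrac1{L_2}+\tfrac1{L_3}$ to lie very close to $1$, pinning down a short list of admissible triples $(L_1,L_2,L_3)$ and hence a short final list of candidates.
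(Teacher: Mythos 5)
Your argument is correct, but it takes a genuinely different route from the paper. Your a priori bound is sound: the inequality $\ell(\Pi_i)\leqslant c(\Pi_i)+d/L_i$ summed against~(\ref{RH:S-to-S(3):eq}) and~(\ref{c=4:eq}) gives $d\,(1-\frac1{L_1}-\frac1{L_2}-\frac1{L_3})\leqslant 2$, which is just the combinatorial form of~(\ref{RH:orb:eq}) together with $-\chiorb(\Xtil)<2$, and it does yield $d\leqslant 84$ and, for $d\geqslant 13$, the constraint $\frac1{L_1}+\frac1{L_2}+\frac1{L_3}>\frac{11}{13}$; after that you run the same degree-by-degree machinery (tasks (a)--(c)) as in the $d\leqslant 12$ case, by computer. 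The paper instead proceeds in reverse order, from the candidate orbifold cover to the partitions: it first lists the admissible base orbifolds $S(p,q,r)$ from $\frac{11}{13}<\frac1p+\frac1q+\frac1r<1$, and then sharpens the degree bound by exploiting the extra fact that each cone order of $\Xtil$ divides one of $p,q,r$ and hence is at most $r$, giving $d\leqslant d_{\max}(p,q,r)=\frac{2-\frac4r}{1-\frac1p-\frac1q-\frac1r}$ (Lemma~\ref{dmax:lem}), so at most $60$ rather than $84$, plus congruence refinements $d_{\max}^{\,\equiv k\,(n)}$ (Lemmas~\ref{dcong:lem} and~\ref{dcong:bounds:lem}) that cut the surviving $(d,r)$ pairs down to a list small enough to be settled largely by hand, orbifold by orbifold, with only the residue deferred to~\cite{hyp1:proofs}. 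Your version buys uniformity and a very short write-up at the cost of a substantially larger brute-force search (all $d$ up to $84$, all partitions with $c\leqslant 4$), and it forgoes the divisibility constraint on the cone orders of $\Xtil$ that the paper uses both to lower the bound to the actual maximum $60$ and to kill many putative $\Xtil$'s (as in Table~\ref{Xtil:for:X=S(2,4,6):tab}) without ever listing partitions; the paper's version is heavier to set up but is verifiable by hand and explains structurally why the list stops at item~\texttt{146}. Since the paper itself delegates routine enumeration to~\cite{hyp1:proofs}, your reliance on a computer run is not a gap in principle, but as written the burden of the proposition rests entirely on that unexhibited computation.
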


\begin{proof}
In this case we proceed in reverse order, from $\Xtil\dotsto X$ to $d$ and the partitions.
Namely we first analyze which hyperbolic candidate orbifold covers
$\Xtil\dotsto^{d:1} X$ exist with $d\geqslant 13$, and in the course of this analysis
we determine for what choices of $d$ and $\Pi_1,\Pi_2,\Pi_3$ these candidates can arise.

Let us then consider a hyperbolic candidate $\Xtil\dotsto^{d:1} X$ with $d\geqslant 13$,
$\Xtil=S(\alpha,\beta,\gamma,\delta)$ and $X=S(p,q,r)$.
Let us always assume that $\alpha\leqslant\beta\leqslant\gamma\leqslant\delta$ and $p\leqslant q\leqslant r$. Since
$0<-\chiorb(\Xtil)=2-\left(\frac1\alpha+\frac1\beta+\frac1\gamma+\frac1\delta\right)<2$
and $\chiorb(\Xtil)=d\myprod\chiorb (X)$, we deduce that
\begin{center}
$0<-\chiorb(X)=1-\left(\frac1p+\frac1q+\frac1r\right)<\frac2{13}
\quad\Rightarrow\quad
\frac{11}{13}<\frac1p+\frac1q+\frac1r<1$.
\end{center}
Assuming $p\leqslant q\leqslant r$ it is now very easy to check that
the last inequality is satisfied only for $(p,q,r)$ as follows:
\begin{itemize}
\item [(\textbf{A})] $(2,3,r)$ with $7\leqslant r\leqslant77$;
\item [(\textbf{B})] $(2,4,r)$ with $5\leqslant r\leqslant10$;
\item [(\textbf{C})] $(2,5,5)$, $(2,5,6)$, $(3,3,4)$ or $(3,3,5)$.
\end{itemize}

We now remark that:
\begin{itemize}
\item[(\textbf{I})] Each of $\alpha,\beta,\gamma,\delta$ must be a divisor of one of $p,q,r$;
\item[(\textbf{II})] $d=\frac{\chiorb(\Xtil)}{\chiorb(X)}$ must be an integer.
\end{itemize}

Let us now establish the following auxiliary result:

\begin{lem}\label{dmax:lem}
Let $S(\alpha,\beta,\gamma,\delta)\dotsto^{d:1} S(p,q,r)$ be a hyperbolic candidate orbifold cover.
Set $d_{\max}(p,q,r)=\frac{2-\frac4r}{1-\frac1p-\frac1q-\frac1r}$.
Then $d\leqslant d_{\max}(p,q,r)$.
\end{lem}

\begin{proof}
By (\textbf{I}) and the condition $p\leqslant q\leqslant r$ we have
$\alpha,\beta,\gamma,\delta\leqslant r$, whence
$-\chiorb(S(\alpha,\beta,\gamma,\delta))=2-\frac1\alpha-\frac1\beta-\frac1\gamma-\frac1\delta\leqslant2-\frac4r$
and the conclusion follows from~(\ref{RH:orb:eq}), because $-\chiorb(S(p,q,r))=1-\frac1p-\frac1q-\frac1r$.
\end{proof}

Getting back to the cases (\textbf{A}), (\textbf{B}), and (\textbf{C}) that we must consider, we start
from the last one and note that
\begin{center}
$d_{\max}(2,5,5)=12,\ \
d_{\max}(2,5,6)=10,\ \  d_{\max}(3,3,4)=12,\ \ d_{\max}(3,3,5)=9$.\end{center}
Since the assumption $d\geqslant 13$ is in force, we conclude that case (\textbf{C}) does not yield relevant candidates.
Turning to case (\textbf{B}), we have
\begin{center}
$d_{\max}(2,4,5)=24,\quad d_{\max}(2,4,6)=16,\quad d_{\max}(2,4,7)=13.\overline{3},$\\
$\quad d_{\max}(2,4,8)=12,\quad d_{\max}(2,4,9)=11.2,\quad d_{\max}(2,4,10)=10.\overline{6}$.
\end{center}
Therefore, the cases $r=8$, $r=9$ and $r=10$ do not yield relevant candidates. The case $r=7$, namely $X=S(2,4,7)$, also does not,
because for $\Xtil=S(7,7,7,7)$ we have that $\frac{\chiorb(\Xtil)}{\chiorb(X)}=13.\overline{3}$, which violates (\textbf{II}),
and for the next biggest possible $-\chiorb(\Xtil)$ in view of (\textbf{I}),
corresponding to $\Xtil=S(4,7,7,7)$, we have $\frac{\chiorb(\Xtil)}{\chiorb(X)}=12.\overline{3}$, which again violates (\textbf{II}).
Let now analyze the case $X=S(2,4,6)$. Picking $\alpha,\beta,\gamma,\delta\in\{2,3,4,6\}$,
as imposed by (\textbf{I}), and discarding the cases where
$\frac{\chiorb(\Xtil)}{\chiorb(X)}\leqslant 12$, we find the possible $\Xtil$'s of Table~\ref{Xtil:for:X=S(2,4,6):tab}.
\begin{table}
\begin{center}
\begin{tabular}{c||c|c|c|c|c|c}
$\Xtil$ &
   \footnotesize{$S(6,6,6,6)$} &
   \footnotesize{$S(4,6,6,6)$} &
   \footnotesize{$S(3,6,6,6)$} &
   \footnotesize{$S(4,4,6,6)$} &
   \footnotesize{$S(3,4,6,6)$} &
   \footnotesize{$S(4,4,4,6)$} \\ \hline
$d$ &
    16  &   15  &   14  &   14  &   13  &   13
\end{tabular}
\end{center}
\mycap{The relevant $\Xtil$ for $X=S(2,4,6)$
and the corresponding $d=\frac{\chiorb(\Xtil)}{\chiorb(X)}$.\label{Xtil:for:X=S(2,4,6):tab}}
\end{table}
To conclude with the case $X=S(2,4,6)$ we must now discuss for which
$\Xtil$ as in Table~\ref{Xtil:for:X=S(2,4,6):tab} there actually exist
partitions of $d$ inducing a candidate $\Xtil\to X$. We do this in full detail
to give the reader a taste of the arguments one can use to this end. In similar cases below
we will omit all details, addressing to~\cite{hyp1:proofs}.

\begin{prop}
The only candidate covers with $X=S(2,4,6)$ and $\Xtil$ as in Table~\ref{Xtil:for:X=S(2,4,6):tab}
are those described in items \emph{\texttt{90}} and \emph{\texttt{103}} in Table~\ref{hyp:S(4)-to-S(3):summary:83-116:tab}.
\end{prop}

\begin{proof}
For $\Xtil=S(6,6,6,6)$ the partition of $d=16$ corresponding
to the cone point of order $6$ in $X$ must include four $1$'s, which
already give all four cone points of $\Xtil$, so the partition must
be $(6,6,1,1,1,1)$ and the other two must be $(2,\ldots,2)$ and $(4,4,4,4)$, whence
item \texttt{103} in Table~\ref{hyp:S(4)-to-S(3):summary:83-116:tab}.
For $\Xtil=S(4,6,6,6)$, $\Xtil=S(3,4,6,6)$ and $\Xtil=S(4,4,4,6)$ the fact that
$\Xtil$ has no cone point of order $2$ implies that the partition of $d$
corresponding to the cone point of order $2$ in $X$ consists of $2$'s only,
which is impossible because $d$ is odd. A similar argument
shows that $\Xtil=S(3,6,6,6)$ does not give a candidate cover:
since $\Xtil$ has no cone point of order $2$ or $4$, the partition
corresponding to the cone point of order $4$ in $X$ must consist of $4$'s only,
which is impossible because $d=14$ is not a multiple of $4$.
Finally, let us consider $\Xtil=S(4,4,6,6)$; these cone orders tell us
that in both partitions corresponding to the cone
points of orders $4$ and $6$ in $X$ there must be two $1$'s, but
then the only possibility is $(2,\ldots,2)$, $(4,4,4,1,1)$ and $(6,6,1,1)$,
which gives item \texttt{90} in Table~\ref{hyp:S(4)-to-S(3):summary:83-116:tab}.
\end{proof}

To conclude case (\textbf{B}) we must deal with $X=S(2,4,5)$.
This is done in very much the same way as for $X=S(2,4,6)$ and leads to
items \texttt{95}, \texttt{104}, \texttt{106}, \texttt{116}, \texttt{120}, \texttt{127}, and \texttt{132}
in Tables~\ref{hyp:S(4)-to-S(3):summary:83-116:tab} and~\ref{hyp:S(4)-to-S(3):summary:117-146:tab},
see~\cite{hyp1:proofs}.

\bigskip

Finally, we examine case (\textbf{A}), where $X=S(2,3,r)$ and $7\leqslant r\leqslant 77$.
We first note that for the function $d_{\max}$ introduced
in Lemma~\ref{dmax:lem} we have $d_{\max}(2,3,r)=12\cdot \frac{r-2}{r-6}$. Imposing
$d\leqslant\left\lfloor d_{\max}(2,3,r)\right\rfloor$ we then easily get:

\begin{lem}\label{floor:dmax:lem}
Let $S(\alpha,\beta,\gamma,\delta)\dotsto^{d:1} S(2,3,r)$ be a candidate
orbifold cover. Then, depending on the value of $r$, the degree $d$ satisfies
the upper bound described in Table~\ref{floor:dmax:tab}.
\begin{table}
\begin{center}
\begin{tabular}
{l||
    c|
        c|
            c|
                c|
                    c}

If \ldots &
    $r=7$   &
        $r=8$   &
            $r=9$   &
                $r=10$  &
                    $r=11$ \\ \hline

then $d\leqslant$ &
    60      &
        36      &
            28      &
                24      &
                    21 \\ \hline\hline
If \ldots &
    $r=12$ &
        $13\leqslant r\leqslant 14$ &
            $r=15$ &
                $16\leqslant r\leqslant 18$ &
                    $19\leqslant r\leqslant 22$ \\ \hline
then $d\leqslant$ &
    20 &
        18 &
            17 &
                16 &
                    15 \\ \hline\hline
If \ldots &
    $23\leqslant r\leqslant 30$ &
        $31\leqslant r\leqslant 54$ &
            $r\geqslant 55$ &
                &
                    \\ \hline
then $d\leqslant$ &
    14 &
        13 &
            12 &
                &
\end{tabular}
\end{center}
\mycap{Upper bounds on $d$ depending on the values of $r$.\label{floor:dmax:tab}}
\end{table}
\end{lem}

Since our aim is to list the relevant candidate covers with $d\geqslant 13$
we see in particular that we can restrict to
$7\leqslant r\leqslant 54$.
This leaves however several cases to consider, to reduce which
we establish the following:

\begin{lem}\label{dcong:lem}
Let $S(\alpha,\beta,\gamma,\delta)\dotsto^{d:1} S(2,3,r)$ be a candidate
orbifold cover. Set
$$
\begin{array}{cc}
d_{\max}^{\,\equiv1\,(2)}(r)=9\cdot\frac{r-2}{r-6},&
d_{\max}^{\,\equiv1\,(3)}(r)=2\cdot\frac{5r-9}{r-6},\\
d_{\max}^{\,\equiv2\,(3)}(r)=4\cdot\frac{2r-3}{r-6},&
d_{\max}^{\,\equiv1\,(6)}(r)=\frac{7r-12}{r-6}.
\end{array}
$$
If $d\equiv k\ (\textrm{mod}\ n)$ then $d\leqslant d_{\max}^{\,\equiv k\,(n)}(r)$.
\end{lem}

\begin{proof}
If $d\equiv 1\ (\textrm{mod}\ 2)$ then the partition of $d$ corresponding to
the cone point of order $2$ in $S(2,3,r)$ must have at least one entry equal to $1$,
so $\alpha=2$, whence $-\chiorb(S(\alpha,\beta,\gamma,\delta))\leqslant 2-\frac12-\frac3r$;
therefore, $d\leqslant\frac{2-\frac12-\frac3r}{1-\frac12-\frac13-\frac1r}=d_{\max}^{\,\equiv1\,(2)}(r)$.
The other cases are treated in a similar way.
\end{proof}

\begin{lem}\label{dcong:bounds:lem}
Let $S(\alpha,\beta,\gamma,\delta)\dotsto^{d:1} S(2,3,r)$ be a candidate
orbifold cover with $d\geqslant 13$. Then, depending on the congruence class of $d$ modulo $6$,
the cone order $r$ satisfies the upper bounds described in Table~\ref{cong:bounds:tab}.
\begin{table}
\begin{center}
\begin{tabular}
{l||                                c|      c|      c|      c|      c|      c      }
If $d\equiv \ldots\ (\textrm{mod}\ 6)$ &   0   &   1   &   2   &   3   &   4   &   5   \\ \hline
then $r\leqslant$                             &   54  &   11  &   13  &   15  &   20  &   13
\end{tabular}
\end{center}
\mycap{Upper bounds on $r$ depending on the congruence class of $d$ modulo $6$.\label{cong:bounds:tab}}
\end{table}
\end{lem}

\begin{proof}
The conclusion is obtained
by direct computation after imposing the appropriate $d_{\max}^{\,\equiv k\,(n)}(r)\geqslant 13$.
\end{proof}

Combining the restrictions given by Lemmas~\ref{floor:dmax:lem},~\ref{dcong:lem}
and~\ref{dcong:bounds:lem} we can now conclude our analysis of case (\textbf{A}).

\begin{prop}
No candidate orbifold cover $S(\alpha,\beta,\gamma,\delta)\dotsto^{d:1} S(2,3,r)$
exists with $d\geqslant 13$ and $r\geqslant 15$.
\end{prop}

\begin{proof}
Suppose first that $r\geqslant 19$. Then
Lemma~\ref{floor:dmax:lem} implies that $d\leqslant 15$, so $d$ can attain the values
$13$, $14$ and $15$, and Lemma~\ref{dcong:bounds:lem}
implies that $r\leqslant 15$, a contradiction.
For $15\leqslant r\leqslant 18$ Lemma~\ref{floor:dmax:lem} implies that $d\leqslant 17$,
and then Lemma~\ref{dcong:bounds:lem} shows that either $d=16$ or $d=r=15$.
For $d=16$ we then get a contradiction invoking Lemma~\ref{dcong:lem}
because $d_{\max}^{\,\equiv1\,(3)}(r)<16$ for $15\leqslant r\leqslant 18$.
Similarly for $d=r=15$ we get a contradiction because
$d_{\max}^{\,\equiv1\,(2)}(15)=13<15$.
\end{proof}

\begin{prop}
The only candidates $S(\alpha,\beta,\gamma,\delta)\dotsto^{d:1} S(2,3,r)$
with $d\geqslant 13$ and $13\leqslant r\leqslant 14$ are items \emph{\texttt{100}}
and \emph{\texttt{109}} in Table~\ref{hyp:S(4)-to-S(3):summary:83-116:tab}.
\end{prop}

\begin{proof}
Start with $r=14$. Lemma~\ref{floor:dmax:lem} implies that $d\leqslant 18$, whence
Lemma~\ref{dcong:bounds:lem} implies that $d$ can attain the values $15$, $16$ and $18$.
For $d=15$ we have $d\leqslant d_{\max}^{\,\equiv1\,(2)}(14)=13.5$ and for $d=16$ we have
$d\leqslant d_{\max}^{\,\equiv1\,(3)}(14)=15.25$, whence a contradiction in both cases.
For $d=18$ we note that the only partitions $\Pi$ of $d=18$ consisting of divisors of
$14$ and such that $c(\Pi)\leqslant 4$ are
\begin{center}
$(7,7,2,2),\quad (14,2,2),\quad (14,2,1,1),\quad (14,1,1,1,1)$.
\end{center}
For $(7,7,2,2)$ and $(14,1,1,1,1)$ we have $c(\Pi)=4$, so the other two
partitions of $d=18$ must be $(2,\ldots,2)$ and $(3,\ldots,3)$, but the
total length is $d+2=20$ only for $(14,1,1,1,1)$, which gives
\texttt{109}. For $(14,2,2)$ we have $c(\Pi)=2$, so the two other partitions
must be $(2,\ldots,2,1,1)$ and $(3,\ldots,3)$, but then the total length is $19$.
For $(14,2,1,1)$ we have $c(\Pi)=3$ and no choice is possible for
the two other partitions.

For $r=13$ again we have $d\leqslant 18$, and using as above the values of $d_{\max}^{\,\equiv k\,(n)}(13)$
we see that $d=14$, $d=15$ and $d=17$ are impossible. For $d=16$ the only partition $\Pi$ of $d$
consisting of divisors of $13$ with $c(\Pi)\leqslant 4$ is $(13,1,1,1)$, which implies
that the two other partitions must be $(2,\ldots,2)$ and $(3,\ldots,3,1)$, whence
\texttt{100}. For $d=18$ there is no partition $\Pi$ of $d$
consisting of divisors of $13$ with $c(\Pi)\leqslant 4$.
\end{proof}

The rest of the discussion leading to the proof of Proposition~\ref{hyp:S(4)-to-S(3):d_geq_13:prop} is
now quite similar to the arguments already used. For a decreasing value of $r$ between $12$ and $7$
one has an increasingly complicated argument consisting of:
\begin{itemize}
\item A use of Lemmas~\ref{floor:dmax:lem} to~\ref{dcong:bounds:lem}, to exclude some values of $d$;
\item The analysis of what partitions $\Pi_1,\Pi_2,\Pi_3$ of $2,3,r$ satisfy
${\rm l.c.m.}(\Pi_1)=2$, ${\rm l.c.m.}(\Pi_2)=3$, ${\rm l.c.m.}(\Pi_3)=r$,
$c(\Pi_1)+c(\Pi_2)+c(\Pi_3)=4$ and $\ell(\Pi_1)+\ell(\Pi_2)+\ell(\Pi_3)=d+2$; this last discussion is
easier for $r=7$ and $r=11$, since $\Pi_3$ can only consist of $r$'s and $1$'s.
\end{itemize}

We address the reader to~\cite{hyp1:proofs} for a careful description of this argument, only
mentioning that for $7\leqslant r\leqslant 12$ one gets exactly the 53 items in
Tables~\ref{hyp:S(4)-to-S(3):summary:83-116:tab}
and~\ref{hyp:S(4)-to-S(3):summary:117-146:tab} excluding the $9$
coming from case (\textbf{B}) and the 2 already found in case (\textbf{A}).
More precisely for $r=7,\ 8,\ 9,\ 10,\ 11,\ 12$ one gets respectively
$13,\ 22,\ 5,\ 7,\ 1,\ 5$ candidates, for a total of $53$.
\end{proof}

Combining Propositions~\ref{hyp:S(4)-to-S(3):d_leq_12:prop} and~\ref{hyp:S(4)-to-S(3):d_geq_13:prop}
we obtain the conclusion of the proof of
Theorem~\ref{hyp:S(4)-to-S(3):enum:thm}.
\finedimo

\dimo{hyp:T(1)-to-S(3):enum:thm}
We use the same notation as above.
Since $\Sigmatil$ is now the torus $T$ and $\Sigma$ is the sphere $S$,
the Riemann-Hurwitz formula~(\ref{RH:general:eq}) reads
\begin{equation}
\ell(\Pi_1)+\ell(\Pi_2)+\ell(\Pi_3)=d.\label{RH:T-to-S(3):eq}
\end{equation}
Therefore, we need to enumerate the degrees $d$, and the partitions $\Pi_1,\Pi_2,\Pi_3$ satisfying~(\ref{RH:T-to-S(3):eq}) and
\begin{equation}
c(\Pi_1)+c(\Pi_2)+c(\Pi_3)=1,\label{c=1:eq}
\end{equation}
because in this case in the associated candidate orbifold cover
$\Xtil\dotsto X$
we automatically have that $\Xtil=T(\alpha)$ with $\alpha>1$ is hyperbolic, so is $X$.

Relations~(\ref{RH:T-to-S(3):eq}) and~(\ref{c=1:eq}) imply that
for any given $d$ we must find divisors $p>1$ and $q>1$ of $d$,
an integer $r>1$ and a divisor $r'\neq r$ of $r$ such that $d-r'$ is a
multiple of $r$ and $\frac dp+\frac dq+\frac{d-r'}r+1=d$.
This is very easily done for $d\leqslant 17$ and leads to the first
$18$ items in Table~\ref{hyp:T(1)-to-S(3):summary:tab}.
(We note in passing that in Table~\ref{hyp:T(1)-to-S(3):summary:tab}
the partitions $\Pi_1,\Pi_2,\Pi_3$ defining a candidate cover are
rearranged for increasing l.c.m.) As an only
example, we present the argument for $d=12$, addressing the reader to~\cite{hyp1:proofs}
for the other degrees up to $17$.

So, for $d=12$, we note that the pairs $(r,r')$ with $r'\neq r$
a divisor of $r$ and $12-r'$ a multiple of $r$ are the following ones:
\begin{center}$(8,4),\quad (9,3),\quad (10,2),\quad (11,1)$.\end{center}
For each of them we have $\frac{12-r'}r=1$, so we must now
find two divisors $p$ and $q$ of $12$ such that $\frac{12}p+\frac{12}q=10$,
and one readily sees that up to permutation the only choice is $p=2$ and $q=3$.
We conclude that for $d=12$ there are $4$ relevant candidate covers, listed as
items \texttt{159} to \texttt{162} in Table~\ref{hyp:T(1)-to-S(3):summary:tab}.

Turning to the case $d\geqslant 18$, consider a candidate orbifold cover
$\Xtil\dotsto^{d:1} X$ with
hyperbolic $\Xtil=T(\alpha)$ and $X=S(p,q,r)$. Since
$0<-\chiorb(\Xtil)=1-\frac1\alpha<1$
and $\chiorb(\Xtil)=d\myprod\chiorb (X)$, we readily deduce that
$\frac{17}{18}<\frac1p+\frac1q+\frac1r<1$, which implies that either $p=2$, $q=4$ and $r=5$ or
$p=2$, $q=3$ and $7\leqslant r\leqslant 8$.
For $p=2$, $q=4$ and $r=5$ one must have $\alpha\in\{2,4,5\}$ and correspondingly
$d\in\{10,15,16\}$, which contradicts $d\geq 18$.
For $p=2$, $q=3$ and $r=7$ the partitions
of $d$ giving rise to the candidate must have one of the following forms:
\begin{center}
$(2,\ldots,2,1),(3,\ldots,3),(7,\ldots,7),$\\
$(2,\ldots,2),(3,\ldots,3,1),(7,\ldots,7),$\\
$(2,\ldots,2),(3,\ldots,3),(7,\ldots,7,1).$
\end{center}
Correspondingly,~(\ref{RH:T-to-S(3):eq}) translates into
\begin{center}
$\frac{d-1}2+1+\frac d3+\frac d7=d \quad\Rightarrow \quad d=21$\\
$\frac d2+\frac {d-1}3+1+\frac d7=d \quad\Rightarrow \quad d=28$\\
$\frac d2+\frac d3+\frac {d-1}7+1=d \quad\Rightarrow \quad d=36$
\end{center}
and we get the candidates \texttt{166} to \texttt{168} in Table~\ref{hyp:T(1)-to-S(3):summary:tab}.
For $r=8$ one carries out a similar analysis, this time with five different triples of partitions
(because $8$ has three divisors smaller than itself, while $2$ and $3$ have one), and one
finds as the only new candidate item \texttt{165} in Table~\ref{hyp:T(1)-to-S(3):summary:tab}.
Note that for the partitions
$(2,\ldots,2),(3,\ldots,3),(8,\ldots,8,1)$, imposing $\frac d2+\frac d3+\frac{d-1}8+1=d$, one finds
$d=21$, but this does not give a candidate, since $\frac d2$ and $\frac{d-1}8$ are not integers.
See the details in~\cite{hyp1:proofs}.
\finedimo

\section{Overview of the techniques used to prove\\
realizability and exceptionality}
\label{techniques:sec}

In this section we briefly present the methods using which we have proved realizability
or exceptionality for each of the 168 candidate covers in Theorems~\ref{hyp:S(4)-to-S(3):summary:thm}
and~\ref{hyp:T(1)-to-S(3):summary:thm}.

\paragraph{Dessins d'enfant (\textsf{DE})}
This is a classical technique, introduced by Gro\-thendieck in~\cite{Groth}
for studying algebraic maps between Riemann surfaces, which
proves a powerful tool both to exhibit realizability
and to show exceptionality of candidate covers.
We introduce Grothendieck's dessins in their original form, that is
valid only for covers of the sphere with three
branching points, but we mention that the method was generalized in~\cite{PePe1} to
the case of more branching points.

To begin recall that a bipartite graph is a
finite 1-complex whose vertex set is split
as $V_1\sqcup V_2$ and each edge has one endpoint
in $V_1$ and one in $V_2$. We now give the following:

\begin{defn} A \emph{dessins d'enfant} on a surface $\Sigmatil$
is a \emph{bipartite} graph $D\subset\Sigmatil$ such that
$\Sigmatil \backslash D$ consists of open discs. The \emph{length}
of one of these discs is the number of edges of $D$ along
which its boundary passes, counted with multiplicity.
\end{defn}

The connection between dessins d'enfant and branched covers comes from the
next result (see~\cite{PePe1} for a proof):

\begin{prop}
The realizations of a candidate
$\Sigmatil\argdotstosex{d:1}{(d_{11},\ldots,d_{1m_1}),
\ldots,(d_{31},\ldots,d_{3m_3})}S$
correspond to the dessins d'enfant $D\subset\Sigmatil$ with
set of vertices $V_1\sqcup V_2$ such that for $i=1,2$
the vertices in $V_i$ have valences $(d_{ij})_{j=1}^{m_i}$,
and the discs in $\Sigmatil \backslash D$ have lengths
$\left(2d_{3j}\right)_{j=1}^{m_3}$.
\end{prop}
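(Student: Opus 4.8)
The plan is to set up the classical dictionary between branched covers of the sphere with three branching points and bipartite graphs, and then to verify that it is a bijection in the precise form stated. First I would normalize the target: place the three branching points of $S$ at $0$, $1$, $\infty$, and fix an embedded arc $a\subset S$ joining $0$ to $1$ and avoiding $\infty$, so that $S\setminus a$ is an open disc containing $\infty$. Given a realization $f\colon\Sigmatil\to S$ of the candidate, set $D=f^{-1}(a)$. Since $a$ is an arc and the only branch values of $f$ are $0,1,\infty$, the set $D$ is a finite graph: away from $0$ and $1$ it is a disjoint union of open arcs, each mapped homeomorphically onto the interior of $a$, and these close up at the isolated preimages $V_1=f^{-1}(0)$ and $V_2=f^{-1}(1)$. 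Every edge of $D$ then runs from $V_1$ to $V_2$, so $D$ is bipartite, and $\Sigmatil\setminus D$ equals $f^{-1}(S\setminus a)$.

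The structural input is the local normal form for a branched cover of a disc whose branching is concentrated over a single interior point: such a cover is connected, its source is a disc, and it is equivalent to $(\matC,0)\myto^{z\mapsto z^{k}}(\matC,0)$, with exactly one preimage of the branch value, at which the local degree equals the total degree $k$. Applying this to the restriction of $f$ over the disc $S\setminus a$, whose only possible branch value is $\infty$, each region $U$ of $\Sigmatil\setminus D$ is an open disc containing exactly one point of $f^{-1}(\infty)$, and $\deg(f|_U)$ is the local degree of $f$ there; hence these degrees are precisely the entries $d_{3j}$, and there are $m_3$ of them. For the vertices, near $v\in V_1$ the map $f$ is modelled on $z\mapsto z^{k}$ with $k$ the local degree at $v$, so $f^{-1}(a)$ has exactly $k$ germs at $v$ and the valence of $v$ equals the local degree of $f$ at $v$; thus the valences in $V_1$ realize $\Pi_1$ and those in $V_2$ realize $\Pi_2$. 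Finally, the boundary circle of the disc $S\setminus a$ runs along $a$ twice, so if $\deg(f|_U)=d_{3j}$ then the boundary circle of $U$ covers it with degree $d_{3j}$ and hence traverses edges of $D$ a total of $2d_{3j}$ times counted with multiplicity, which is the length of $U$. Thus $D$ is a dessin of exactly the stated combinatorial type.

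For the converse I would start from an abstract dessin $D\subset\Sigmatil$ with $V=V_1\sqcup V_2$ having the prescribed valences and with complementary discs of the prescribed lengths $2d_{3j}$, and construct a realization directly. Send all of $V_1$ to $0$, all of $V_2$ to $1$, and each edge of $D$ homeomorphically onto $a$ respecting endpoints; the bipartite hypothesis is exactly what makes this consistent. On a complementary disc $U$ of length $2k$, the map already defined on the boundary circle of $U$ is, up to homeomorphism, a degree-$k$ covering of the boundary circle of $S\setminus a$; since every covering of a circle by a circle extends across the bounding disc to the standard branched model with a single interior branch point of local degree $k$, extend $f$ over $U$ to $S\setminus a$ in this way, placing the branch point over $\infty$. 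These partial maps agree along the edges of $D$, hence glue to a map $f\colon\Sigmatil\to S$ that is locally modelled on $z\mapsto z^{k}$ at every point, i.e.\ a branched cover, and rereading its local degrees as above shows that it realizes the given candidate. To conclude that the two assignments are a bijection I would observe that they are mutually inverse up to isotopy, using that neither the choice of the arc $a$ nor an ambient isotopy of $S$ or of $\Sigmatil$ changes the equivalence class of a realization or the isotopy class of the resulting dessin.

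I expect the genuine work here to be bookkeeping rather than anything conceptual. The delicate points are the extension of $f$ across each complementary disc and, symmetrically, pinning down the cyclic alternation of edge-germs and disc-corners around each vertex so that, around every vertex, the local degree of $f$ equals the valence at preimages of $0$ and $1$ and equals half the length of the surrounding disc at preimages of $\infty$; one must also fix the precise equivalence relations used on the two sides, so that the correspondence is a genuine bijection and not merely surjective in each direction. Everything else reduces to the disc normal form for branched covers together with elementary covering-space theory.
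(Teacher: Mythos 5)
The paper itself does not prove this proposition: it is quoted with a pointer to~\cite{PePe1}, so there is no internal argument to compare against. Your plan is the standard dictionary (take $D=f^{-1}(a)$ for an arc $a$ joining the branch points $0$ and $1$, and conversely rebuild $f$ by extending over the complementary discs), which is exactly the classical route taken in the cited literature, and the forward direction as you wrote it is complete: the local model $z\mapsto z^k$ gives the valences, and the one-branch-point normal form over the cut-open disc gives that the complementary regions are discs containing a single preimage of $\infty$, with boundary length $2d_{3j}$.

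The one place where your write-up stays at the level of ``bookkeeping to be done'' is in fact the only point of the converse that can go wrong if done carelessly, so let me name it precisely. After mapping $D$ to $a$, the map on the completed boundary circle of a region is a map to $a$, not yet to the boundary circle of the cut-open target disc $\Delta$ (which is two copies $a^+,a^-$ of $a$ glued at their endpoints); to extend over the region you must first lift it to $\partial\Delta$, i.e.\ label the $2d_{3j}$ boundary edge-traversals alternately by $a^+$ and $a^-$. Bipartiteness makes such an alternating choice possible region by region, but if the choices are made independently one can end up with the two traversals of a single edge of $D$ (by the two adjacent regions, or by the same region from both sides) carrying the \emph{same} label; the glued map is then a fold along that edge, not a branched cover. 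The fix is the orientation argument you allude to: orient $\Sigmatil$ and $S$, give each region and $\Delta$ the induced boundary orientations, and send each \emph{directed} boundary traversal of an edge to the unique one of $a^+,a^-$ traversed in the compatible direction. Since the two traversals of any edge are oppositely directed, they automatically receive opposite labels, so the glued map is a local homeomorphism along open edges, is the $z\mapsto z^{d_{3j}}$ model at the chosen point over $\infty$ in each region, and at a vertex of valence $m$ the $m$ sectors, each mapped onto the slit disc and attached along alternating sides of $a$, assemble into the $z\mapsto z^m$ model. With this point spelled out, your argument is a correct and self-contained proof of the proposition, matching the standard one from~\cite{PePe1} rather than anything specific to this paper.
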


\paragraph{Graph moves (\textsf{GM})}
This method will be used below only to prove exceptionality.
Its description here is rather generic, and in a sense obvious,
but in several practical cases we can indeed make the method work.

\begin{prop}\label{GM:prop}
Let $c$ and $c'$ be partial dessins d'enfant. Let $S\argdotsto{d:1}{\Pi}S$ be a
candidate surface branched cover. Suppose that:
\begin{itemize}
\item[(1)] While trying to construct a dessin d'enfant realizing
$S\argdotsto{d:1}{\Pi}S$ one is forced to insert a portion $c$;
\item[(2)] Any completion of $c$ to a dessin d'enfant realizing $S\argdotsto{d:1}{\Pi}S$, if any,
could also be used to complete $c'$ and would give a to a dessin d'enfant realizing
another candidate $S\argdotsto{d':1}{\Pi'}S$.
\end{itemize}
If $S\argdotsto{d':1}{\Pi'}S$ is exceptional then $S\argdotsto{d:1}{\Pi}S$ also is.
\end{prop}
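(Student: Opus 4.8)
The plan is to argue by contraposition, using the correspondence between realizations of a candidate surface branched cover and dessins d'enfant stated just above this proposition. So I would begin by assuming that $S\argdotsto{d:1}{\Pi}S$ is realizable; by that correspondence this means there is a dessin d'enfant $D\subset S$ whose vertex valences and complementary-disc lengths match the data in $\Pi$.

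Next I would feed this $D$ into the two hypotheses. By~(1), every dessin realizing $S\argdotsto{d:1}{\Pi}S$ is forced to contain the portion $c$; in particular $D$ is one such completion of $c$. By~(2), this completion, reinterpreted as a completion of the partial dessin $c'$ instead, yields a bipartite graph $D'\subset S$ that is a dessin d'enfant realizing $S\argdotsto{d':1}{\Pi'}S$. Applying the correspondence Proposition once more, now in the reverse direction, we conclude that $S\argdotsto{d':1}{\Pi'}S$ is realizable, contradicting the hypothesis that it is exceptional. Hence $S\argdotsto{d:1}{\Pi}S$ is exceptional, as claimed.

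I expect the statement itself to require essentially no further work: it is a formal packaging of a proof template, and the logical skeleton above is the whole argument once the dessins--realizations dictionary is in hand. The genuine difficulty is deferred to the applications in Section~\ref{real:ex:sec}, where for each concrete pair of candidates one must actually verify that hypotheses~(1) and~(2) hold --- that the valence and disc-length constraints coming from $\Pi$ truly \emph{force} the sub-configuration $c$ into every realizing dessin, and that swapping $c$ for $c'$ is compatible with completing the rest of the dessin in a way that respects the data of $\Pi'$. That case-by-case combinatorial checking, rather than the proposition, is where the real content of the method lies; the proof of the proposition proper is just the observation that these two hypotheses are exactly what is needed to transport non-realizability backwards.
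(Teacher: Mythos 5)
Your argument is correct and is exactly the (essentially tautological) reasoning the paper has in mind: it states Proposition~\ref{GM:prop} without a formal proof, treating it as obvious that hypotheses~(1) and~(2) transport exceptionality of $S\argdotsto{d':1}{\Pi'}S$ back to $S\argdotsto{d:1}{\Pi}S$ via the dessins--realizations correspondence. You are also right that the substantive content lies in verifying~(1) and~(2) in the concrete applications (as in Proposition~\ref{28:41:by:GM:prop}), not in the proposition itself.
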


The key point of this result is that one can establish condition (2) by examining $c$, $c'$ and $S\argdotsto{d:1}{\Pi}S$
only, without searching the completions of $c$ realizing $S\argdotsto{d:1}{\Pi}S$. We also
mention here the remarkable fact that in Proposition~\ref{28:41:by:GM:prop}
we will apply the \textsf{GM} method with $c'$ more complicated than $c$,
namely with $d'$ greater than $d$.

\paragraph{Very even data (\textsf{VED}) and block decompositions (\textsf{BD})}
If $d=d'+d''$, we will say that a partition $\Pi$ of $d$ \emph{refines} the partition
$(d',d'')$ if $\Pi$ splits as $\Pi'\sqcup\Pi''$ with $\Pi'$ a partition of $d'$
and $\Pi''$ a partition of $d''$. The next result established in~\cite{PePe1}
will be used to show exceptionality of several candidates:

\begin{prop}\label{VED:prop}
Consider a candidate surface cover $\Sigmatil\argdotstoter{d:1}{\Pi_1,\Pi_2,\Pi_3}\Sigma$
with $d$ and each element of $\Pi_{i}$ for $i=1,2$ being even.
If the candidate is realizable then $\Pi_3$ must refine the partition $\left(d/2,d/2\right)$.
\end{prop}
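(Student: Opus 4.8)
The plan is to read realizability through the dessins d'enfant picture recalled above. By the stated correspondence, a realization of the candidate $\Sigmatil\argdotstoter{d:1}{\Pi_1,\Pi_2,\Pi_3}\Sigma$ yields a bipartite graph $D$ cellularly embedded in $\Sigmatil$ whose black vertices have valences the parts of $\Pi_1$, whose white vertices have valences the parts of $\Pi_2$, and whose complementary discs have lengths $(2d_{3j})_j$, where $\Pi_3=(d_{3j})_j$. Note that the number of edges of $D$ equals the sum of the parts of $\Pi_1$, namely $d$, and that the hypothesis ``$d$ and every part of $\Pi_1$ and of $\Pi_2$ even'' forces every vertex of $D$ to have even valence. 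From here I would work with the CW structure on $\Sigmatil$ given by $D$ together with its complementary discs, and with its $\matZ/2$-chain complex; the argument uses that the covering surface is a sphere, which is the case of interest, so assume $\Sigmatil=S^2$.

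The core of the argument is then a homology-and-counting step. Because all valences of $D$ are even, the $1$-chain $[D]=\sum_e e$ (sum over all edges of $D$) is a $\matZ/2$-cycle, its boundary being $\sum_v\mathrm{val}(v)\,v\equiv 0$. Since $H_1(S^2;\matZ/2)=0$ this cycle is a boundary, so $[D]=\partial\big(\sum_{j\in T}F_j\big)$ for some set $T$ of the discs $F_j$. Reading off coefficients: for each edge $e$, the number of discs $F_j$ with $j\in T$ whose boundary walk traverses $e$ (counted with multiplicity) is odd; as the total multiplicity of $e$ over \emph{all} discs is $2$, it follows that $e$ is traversed exactly once by exactly one disc of $T$ and once by exactly one disc of its complement (equivalently, $T$ and its complement are a proper $2$-colouring of the discs of $\Sigmatil\setminus D$). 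Now summing the boundary lengths $2d_{3j}$ over $j\in T$ counts every edge of $D$ exactly once, so $\sum_{j\in T}2d_{3j}=d$, i.e.\ $\sum_{j\in T}d_{3j}=d/2$, and likewise $\sum_{j\notin T}d_{3j}=d/2$. Hence $\Pi_3$ splits as $\Pi_3'\sqcup\Pi_3''$ with $\Pi_3'=(d_{3j})_{j\in T}$ and $\Pi_3''=(d_{3j})_{j\notin T}$ partitions of $d/2$ (both non-empty, since $d/2\geqslant 1$), that is, $\Pi_3$ refines $(d/2,d/2)$.

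The one point that really needs care is the use of $H_1(\Sigmatil;\matZ/2)=0$, i.e.\ that the covering surface is the sphere: on a higher-genus $\Sigmatil$ an even-valence dessin need not admit a proper $2$-colouring of its faces, and the conclusion can genuinely fail there. A parallel, purely topological proof that makes this transparent is to pull back along the degree-$2$ cover $g\colon Y\to\Sigma$ branched exactly over $x_1$ and $x_2$ (so $Y\cong S^2$): the evenness of all local degrees of $f$ over $x_1$ and $x_2$ makes the fibre product $\Sigmatil\times_\Sigma Y\to\Sigmatil$ an \emph{unbranched} double cover, hence trivial when $\Sigmatil=S^2$, so $\Sigmatil\times_\Sigma Y=\Sigmatil_1\sqcup\Sigmatil_2$; each $\Sigmatil_i\to Y$ is then a branched cover of degree $d/2$, and the fibre of $\Sigmatil\times_\Sigma Y\to Y$ over either of the two points of $g^{-1}(x_3)$ realizes $\Pi_3$ and splits, according to the decomposition $\Sigmatil_1\sqcup\Sigmatil_2$, into two partitions of $d/2$ — the same refinement. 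Either way, the essential content is that evenness forces the parity split, and the only real obstacle is making the global $2$-colouring (equivalently, the triviality of that double cover) precise.
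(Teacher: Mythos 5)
Your argument is correct, but note that the paper itself contains no proof of this proposition: it is quoted from~\cite{PePe1}, and the surrounding text indicates that the proof there rests on expressing the realized cover as a composition involving the degree-$2$ cover of the sphere branched at the first two branching points --- which is essentially your second, fibre-product argument. Your primary argument is a genuinely different and self-contained route: since every vertex of the dessin has even valence, the sum of all edges is a $\matZ/2$-cycle, hence a boundary because $H_1(S^2;\matZ/2)=0$; the resulting collection of faces and its complement form a proper $2$-colouring in which each colour class traverses every edge exactly once, so the corresponding entries $d_{3j}$ in each class sum to $d/2$. This is more elementary than the covering-space argument (no lifting criterion needed), while the fibre-product version explains more structurally \emph{why} the splitting occurs and matches the cited approach. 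The one delicate point you flagged is real and you handled it correctly: the conclusion genuinely requires the covering surface to be the sphere (equivalently, the triviality of the pulled-back double cover). The statement as printed does not say so explicitly, but without it the proposition fails --- for instance the degree-$8$ datum with partitions $(4,4)$, $(2,2,2,2)$, $(6,2)$ is realizable with $\Sigmatil$ the torus, and $(6,2)$ does not refine $(4,4)$ --- whereas in every application made in this paper the relevant candidates have $\Sigmatil=S$, so your restriction is not a gap but the identification of an implicit hypothesis.
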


The next result was also shown in~\cite{PePe1} and it is based on the same
idea that under certain divisibility assumptions a surface branched cover
can be expressed as the composition of two other ones:

\begin{prop}\label{thm:1.4:prop}
Consider a candidate surface cover $\Sigmatil\argdotstoter{d:1}{\Pi_1,\Pi_2,\Pi_3}\Sigma$
with $d$ and each entry of $\Pi_{i}$ for $i=1,2$ being divisible by some $k$.
If the candidate is realizable then each entry of $\Pi_3$ is less than or equal to $d/k$.
\end{prop}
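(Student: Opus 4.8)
The plan is to realize the hypothesis as a factorization of the putative branched cover through an intermediate cyclic-type cover determined by the divisor $k$, and then to read off the constraint on $\Pi_3$ from the behavior of local degrees under composition. Concretely, suppose $f:\Sigmatil\to\Sigma$ realizes the candidate, with the three branch points $x_1,x_2,x_3\in\Sigma$ carrying the partitions $\Pi_1,\Pi_2,\Pi_3$. First I would observe that since every local degree of $f$ over $x_1$ and over $x_2$ is divisible by $k$, the branch locus of $f$ ``over $x_1$ and $x_2$'' is, in an appropriate sense, $k$-divisible; the strategy is to build an auxiliary degree-$k$ cover $g:\Sigma'\to\Sigma$ branched only over $x_1$ and $x_2$ (for instance the cyclic cover associated to a suitable homomorphism $\pi_1(\Sigma\setminus\{x_1,x_2\})\to\matZ/k$ sending the loops around $x_1$ and $x_2$ to generators), and then to show that $f$ lifts to a branched cover $h:\Sigmatil\to\Sigma'$ of degree $d/k$ with $f=g\circ h$. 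The lifting criterion is the standard one: after removing $x_1,x_2,x_3$ and their preimages one has genuine (unbranched) covers, and the condition that the image of $\pi_1$ of the punctured $\Sigmatil$ in $\pi_1$ of the punctured $\Sigma$ lands inside the image of $\pi_1$ of the punctured $\Sigma'$ is exactly guaranteed by the divisibility of the local degrees of $f$ over $x_1,x_2$ by $k$ — the monodromy of $f$ around $x_1$ and $x_2$ consists of permutations whose cycle lengths are all multiples of $k$, hence lies in the kernel of the composite to $\matZ/k$.

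Granting the factorization $f=g\circ h$ with $\deg g=k$ and $\deg h=d/k$, the second step is purely local and elementary. Let $y\in\Sigmatil$ be a preimage of $x_3$ with local degree $d_{3j}$ for $f$. Write $z=h(y)\in\Sigma'$. Then $d_{3j}$ factors as (local degree of $h$ at $y$) times (local degree of $g$ at $z$). Since $g$ is \emph{unbranched over $x_3$}, the local degree of $g$ at $z$ equals $1$, so $d_{3j}$ equals the local degree of $h$ at $y$, which is at most $\deg h=d/k$. This gives exactly the asserted bound $d_{3j}\leqslant d/k$ for every $j$.

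The main obstacle is making the existence of the auxiliary cover $g$ and the lift $h$ completely rigorous, since $\Sigma'$ need not be connected and $g$ a priori is a cover of the punctured sphere that one must check extends over $x_1,x_2$ with the right (cyclic, total-degree-$k$) local structure; one also needs $\Sigma'$ to be an honest closed orientable surface so that $h$ is a genuine branched cover to which one can apply the local analysis. If one prefers to avoid the connectedness bookkeeping, an alternative is to argue directly with monodromy: let $\rho:\pi_1(\Sigma\setminus\{x_1,x_2,x_3\})\to\permu_d$ be the monodromy of $f$, with $\rho$ of the generators $\gamma_1,\gamma_2$ having all cycles of length divisible by $k$. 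One shows the subgroup $\langle\rho(\gamma_1),\rho(\gamma_2)\rangle$ (which contains $\rho(\gamma_3)$ up to the relation $\gamma_1\gamma_2\gamma_3=1$) preserves a partition of $\{1,\dots,d\}$ into blocks of size $k$ on which it acts through $\matZ/k$, and then $\rho(\gamma_3)$, fixing those blocks setwise up to the induced $\matZ/k$-action which is trivial on $x_3$-loops since $\gamma_3$ does not appear in the defining homomorphism, must permute within each block, so each of its cycles has length $\leqslant k$... — at which point one sees the cleanest route is in fact the dual one: the block structure forces each cycle of $\rho(\gamma_3)$ to have length dividing... no: one argues that $\rho(\gamma_3)$ respects the $d/k$ blocks, hence its cycle lengths are at most $d/k$. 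Either way the crux is the construction of the block system, which is the step I would write out carefully, the local-degree conclusion being then immediate.
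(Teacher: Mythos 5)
Your overall route --- factoring a realization $f$ through the degree-$k$ cyclic cover $g$ branched over $x_1,x_2$ and then reading each entry of $\Pi_3$ as a local degree of the degree-$d/k$ map $h$ --- is exactly the composition idea that the paper attributes to~\cite{PePe1}, and your second step is fine: local degrees multiply under composition, $g$ is unbranched over $x_3$, so every entry of $\Pi_3$ is a local degree of $h$ and hence at most $\deg h=d/k$. The genuine gap is the lifting step, and it is not mere bookkeeping. Writing $\Sigma^\circ$ for the complement of the branch points and $\Sigmatil^\circ$ for the complement of their preimages, what you need is $f_*\pi_1(\Sigmatil^\circ)\subseteq\ker\phi$, where $\phi:\pi_1(\Sigma^\circ)\to\matZ/k$ defines $g$. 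The divisibility hypothesis only controls the meridians of $\Sigmatil^\circ$: a small loop around a preimage of $x_i$ maps to a conjugate of $\gamma_i^{d_{ij}}$, hence into $\ker\phi$. But when $\Sigmatil$ has positive genus, $\pi_1(\Sigmatil^\circ)$ has additional generators about whose images under $\phi\circ f_*$ the hypothesis says nothing, so the inclusion is not ``exactly guaranteed'' as you assert; equivalently, the block system in your monodromy variant need not exist, and your sketch stops precisely at the point where it would have to be constructed.

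This gap cannot be closed in the generality in which the statement is phrased here, because the conclusion itself requires the (implicit) hypothesis that $\Sigmatil$ is the sphere, which is how the proposition is actually applied (items \texttt{35} and \texttt{36}, both with $\Sigmatil=S$) and is the setting of~\cite{PePe1}. Concretely, take $d=6$, $\sigma_1=(1\,2\,3\,4\,5\,6)$ and $\sigma_2=(1\,3)(2\,6)(4\,5)$: the action is transitive, all cycles of $\sigma_1,\sigma_2$ have even length, and $\sigma_1\sigma_2$ has cycle type $(5,1)$, so the candidate with $\Pi_1=(6)$, $\Pi_2=(2,2,2)$, $\Pi_3=(5,1)$ and $\Sigmatil=T$ is realizable --- it is item \texttt{149} of Table~\ref{hyp:T(1)-to-S(3):summary:tab} with the branch points permuted --- yet it satisfies your hypothesis with $k=2$ while $5>d/k=3$; in particular no factorization through the cyclic double cover exists for it, so no amount of care with the connectedness of $\Sigma'$ will rescue the argument as written. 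Once you add $\Sigmatil=S$, your proof does close up: $\pi_1$ of a punctured sphere is generated by the meridians, so $f_*\pi_1(\Sigmatil^\circ)\subseteq\ker\phi$ follows from the divisibility, the lift $h$ exists ($\Sigma'$ is connected because $\phi$ is onto, and it closes up to the sphere with $g$ equivalent to $z\mapsto z^k$), and your local-degree comparison finishes the proof. So: right strategy, essentially the cited one, but the step you deferred is exactly where the genus-zero assumption on $\Sigmatil$ is indispensable, and it must be stated and used.
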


We mention that some more specific realizability criteria were provided
in~\cite{PePe1} for even $d$ and $\Pi_1=(2,\ldots,2)$ and either $\Pi_2=(5,3,2,\ldots,2,1)$, or $\Pi_2=(3,3,2,\ldots,2)$
or $\Pi_2=(3,2,\ldots,2,1)$. In an earlier version of~\cite{PePe1} the technique
leading to Propositions~\ref{VED:prop} and~\ref{thm:1.4:prop} was also
generalized to a certain theory of \emph{block decompositions},
which allows in particular to prove exceptionality of several candidate surface covers in degree $12$.
Since these candidates can be alternatively and quite easily discussed using \textsf{DE}, we will
refrain from restating these specific results here.

\paragraph{Geometric gluings (\textsf{GG})}
The main idea of~\cite{PaPe} was to use the geometry of $2$-orbifolds to analyze candidate
surface covers. But this was actually done only in the spherical and Euclidean
case, while for the $11$ hyperbolic candidates corresponding to
orbifold covers between triangular orbifolds the technique of \textsf{DE} was only used.
In this paper for the first time we actually apply the geometry of hyperbolic
orbifolds to discuss realizability of candidate surface branched covers.
The statement we give here, just like Proposition~\ref{GM:prop},
is a rather straight-forward one, but we can
actually employ it in several concrete examples, to
show both realizability and exceptionality:

\begin{prop}\label{GG:prop}
Consider a candidate surface branched cover $\Sigmatil\argdotstobis{d:1}{\Pi_1,\Pi_2,\Pi_3}S$
with associated candidate orbifold cover $\Xtil\dotsto S(p,q,r)$ of hyperbolic type.
Let $D$ be a fundamental domain for $S(p,q,r)$ obtained by mirroring
the hyperbolic triangle $T(p,q,r)$ with inner angles $\frac{\pi}p,\frac{\pi}q,\frac{\pi}r$
in one of its edges. Then the candidate is realizable if an only if
one can realize $\Xtil$ by gluing $d$ copies of $D$
along orientation-reversing hyperbolic isometries, in such a way that, upon
mapping each copy of $D$ to the original $D$,
the resulting orbifold cover $\Xtil\to S(p,q,r)$ matches the covering instructions
given by the original cover.
\end{prop}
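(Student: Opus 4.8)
The plan is to translate the combinatorial notion of realizability into the geometric language of developing maps, exploiting the fact that a hyperbolic $2$-orbifold $S(p,q,r)$ is rigid, so its geometric structure is unique. First I would recall that, by the discussion in Section~\ref{known:orb:sec}, the candidate surface cover $\Sigmatil\dotsto^{d:1}S$ is realizable if and only if the associated candidate orbifold cover $\Xtil\dotsto^{d:1}S(p,q,r)$, endowed with the given covering instructions, is realizable by an honest orbifold cover $f\colon\Xtil\to S(p,q,r)$. So the whole content is to characterize realizability of $f$ geometrically. Since $X=S(p,q,r)$ is hyperbolic and rigid, it carries a unique hyperbolic structure, obtained precisely by doubling the triangle $T(p,q,r)$ across one edge to get the fundamental domain $D$ and then quotienting the tiling of $\matH^2$ by the resulting reflection group by the rotation subgroup. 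The cone points of $X$ of orders $p,q,r$ sit at the three vertex-classes of this tiling.

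Next I would argue the two implications. For the ``only if'' direction: given an orbifold cover $f\colon\Xtil\to X$, pull back the hyperbolic metric on $X$ to $\Xtil$. Because $f$ is a covering in the orbifold sense and the metric on $X$ is hyperbolic with cone singularities at the three special points, the pulled-back structure makes $\Xtil$ a hyperbolic orbifold whose cone points lie over those of $X$; the local degree $k$ at a point of $\Xtil$ over the cone point of order $p_i$ forces the pulled-back cone angle to be $k\cdot(2\pi/p_i)$, which is exactly the condition recorded in the covering instructions. The preimage $f^{-1}(D)$ decomposes $\Xtil$ into $d$ copies of $D$, glued to one another along edges by the orientation-reversing isometries induced from the edge-pairings of $D$ in $X$ (an interior edge of the tiling separates two triangles related by a reflection, hence, passing to the orientation double that is $D$, the gluing of adjacent copies is by an orientation-reversing isometry). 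Thus $f$ yields precisely the kind of gluing asserted. For the ``if'' direction: given a gluing of $d$ copies of $D$ producing a closed surface realizing $\Xtil$ with the prescribed cone data and a map to the single $D\subset X$ respecting the covering instructions, this map is by construction a local isometry away from the cone points and near each cone point looks like $z\mapsto z^k$ in the orbifold charts — so it is an orbifold cover of degree $d$ inducing the required candidate surface branched cover.

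The key bookkeeping step, and the one that needs a little care, is matching the edge identifications: one must check that the copies of $D$ are glued along their \emph{edges} (never folded or glued along proper sub-arcs), that the gluing isometries are the unique orientation-reversing hyperbolic isometries identifying the corresponding edges of $D$, and that around each vertex-class the copies of $D$ close up consistently to give a cone point whose order divides the base order with quotient equal to the prescribed local degree. I would phrase this as: the developing map of the glued-up hyperbolic structure on $\Xtil$ descends, via the holonomy into the reflection group of $T(p,q,r)$, to the orbifold cover $\Xtil\to X$, and conversely every orbifold cover arises this way because the holonomy of $X$ is rigid and generated by the edge-reflections of $D$.

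I expect the main obstacle to be purely expository rather than mathematical: making precise the equivalence between ``a $d$-fold gluing of copies of $D$ along orientation-reversing isometries matching the covering instructions'' and ``an orbifold cover'', in a way that cleanly handles the behaviour at the cone points (where $D$'s with a vertex of angle $\pi/p_i$ must be assembled in groups whose total angle is an integer multiple of $2\pi$, the integer being the order of the resulting cone point of $\Xtil$, and the local degree being the ratio). Since $T(p,q,r)$ is rigid there is no moduli to worry about on the base, so once the combinatorial pattern of gluings is fixed the geometric realization is automatic; this is what makes the statement essentially a reformulation, and the proof short.
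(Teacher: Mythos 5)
Your argument is correct, and in fact the paper offers no proof of Proposition~\ref{GG:prop} at all: it is stated as ``rather straight-forward'' and then only used in concrete examples, so your proposal simply supplies the intended justification --- reduce to the associated orbifold candidate with covering instructions, pull back the (unique, by rigidity) hyperbolic structure of $S(p,q,r)$ along a realizing orbifold cover to tile $\Xtil$ by the $d$ preimage copies of $D$, and conversely observe that a compatible gluing of $d$ copies of $D$ yields a map to $S(p,q,r)$ that is a local isometry off the vertices and of the form $z\mapsto z^k$ at them, hence an orbifold cover with the prescribed instructions. One small inaccuracy worth fixing: your parenthetical explanation of why the gluings are orientation-reversing (``adjacent triangles are related by a reflection'') is not the right reason, since adjacent copies of $D$ in the induced tiling of $\Xtil$ are related by \emph{orientation-preserving} deck transformations of the rotation triangle group; the orientation-reversing phrasing in the statement comes from the convention that the gluing isometry carries the $D$-side of an edge to the $D$-side of its partner (equivalently, it reverses the induced boundary orientations, which is exactly what makes the glued surface coherently oriented), i.e.\ it is the deck transformation composed with the reflection in the target edge. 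Also note that in the ``only if'' direction the closures of the $d$ lifted cells need not be embedded copies of $D$ (edges of one copy may be glued to each other), but this is consistent with the way the proposition and the paper's applications use the word ``gluing,'' and in the ``if'' direction the compatibility of the edge identifications with the projection $D\to S(p,q,r)$ is part of the hypothesis, as you implicitly use.
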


\paragraph{Monodromy representation (\textsf{MR})}
We recall here a very classical viewpoint dating back to~\cite{Hurwitz}
(see also~\cite{PePe1}), based on the remark that
a realization of a candidate branched cover
$\Sigmatil\argdotstosex{d:1}{(d_{11},\ldots,d_{1m_1}),
\ldots,(d_{n1},\ldots,d_{nm_n})}\Sigma$
corresponds to the choice of its monodromy, which is a (suitable)
representation of the fundamental group of the $n$-punctured sphere
into the symmetric group $\permu_d$. More precisely, one has that
that a realization of the given candidate cover
corresponds to the choice of permutations
$\sigma_1,\ldots,\sigma_n\in\permu_d$ such that:
\begin{itemize}
\item $\sigma_i$ has cycles of lengths $(d_{ij})_{j=1}^{m_i}$;
\item the product $\sigma_1\cdots\sigma_n$ is the identity;
\item $\left\langle\sigma_1,\ldots,\sigma_n\right\rangle<\permu_d$
acts transitively on $\{1,\ldots,d\}$.
\end{itemize}

For $n=3$, to apply this method in practice, one should fix a permutation
$\sigma_1$ with cycle lengths $(d_{1j})_{j=1}^{m_1}$, let $\sigma_2$ vary
in the conjugacy class of permutations of cycle lengths $(d_{2j})_{j=1}^{m_2}$
and check whether $\left\langle\sigma_1,\sigma_2\right\rangle$ is transitive
and $\sigma_1\cdot\sigma_2$ has cycle lengths $(d_{3j})_{j=3}^{m_1}$.
When the degree $d$ is high this can be computationally quite demanding,
but a C++ code~\cite{Monge} written by Maurizio Monge, building also on some formulae
proved in~\cite{Zheng}, refines this approach
for the special case of permutations of the form relevant to
Theorem~\ref{hyp:T(1)-to-S(3):summary:thm}. The program employs the
correspondence between Young diagrams and representations of the
symmetric group, automatically generating the $p$-core diagrams
(which are very easy for the particular permutations involved)
to find all the relevant characters. This, together
with some computational tricks again specific to the special permutations
of Table~\ref{hyp:T(1)-to-S(3):summary:tab}, allows the program to
establish within a handful of seconds the realizability of any candidate
of the appropriate type up to degree $200$.

\section{Realizability and exceptionality\\ of the relevant candidate covers}
\label{real:ex:sec}

In this section we discuss the realizability of all the $168$ candidates described in
Theorems~\ref{hyp:S(4)-to-S(3):enum:thm} and~\ref{hyp:T(1)-to-S(3):enum:thm},
thereby completing the proof of Theorems~\ref{hyp:S(4)-to-S(3):summary:thm}
and~\ref{hyp:T(1)-to-S(3):summary:thm}. We begin with the following:

\begin{prop}\label{hyp:S(4)-to-S(3):real:prop}
The $117$ candidate surface covers described in
Tables~\ref{hyp:S(4)-to-S(3):summary:1-12:tab}
to~\ref{hyp:S(4)-to-S(3):summary:117-146:tab} and indicated there to be realizable
are indeed realizable.
\end{prop}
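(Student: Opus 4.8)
The plan is to exhibit, for each of the $117$ candidates flagged as realizable in Tables~\ref{hyp:S(4)-to-S(3):summary:1-12:tab}--\ref{hyp:S(4)-to-S(3):summary:117-146:tab}, an explicit branched cover inducing it. Since every such candidate has base surface $S$ with precisely $n=3$ branching points, the most economical language is that of dessins d'enfant: by the correspondence recalled in Section~\ref{techniques:sec}, a realization is the same as a bipartite graph $D\subset S$ whose vertices in $V_1$ and $V_2$ have the valence multisets prescribed by $\Pi_1$ and $\Pi_2$ and whose complementary discs have lengths $(2d_{3j})_j$. Thus for each row marked $\checkmark$ we will produce such a $D$ on the $2$-sphere.

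To make the verification manageable I would group the $117$ cases by the covered triangular orbifold $X=S(p,q,r)$, i.e. by the triple $\big({\rm l.c.m.}(\Pi_1),{\rm l.c.m.}(\Pi_2),{\rm l.c.m.}(\Pi_3)\big)$. Candidates sharing the same $X$ share essentially the same skeleton — a sphere map whose vertex valences are controlled by $p$ and $q$ — and differ only by \emph{local} modifications (coalescing consecutive edges along a face, or downgrading a vertex of full valence). In particular the large families over $X=S(2,3,r)$ with $\Pi_1\in\{(2,\ldots,2),(2,\ldots,2,1)\}$ and $\Pi_2\in\{(3,\ldots,3),(3,\ldots,3,1)\}$ have as dessin a cubic (or almost cubic) sphere map, and the admissible choices of $\Pi_3$ amount to prescribing the multiset of face lengths; these can be produced uniformly, for instance by inserting chains of length-two faces (which adds $1$'s to $\Pi_3$) and by merging adjacent faces, so that a single annotated picture disposes of many rows at once. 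Where an explicit dessin would be cumbersome I would fall back on two shortcuts: Proposition~\ref{GG:prop}, realizing $\Xtil$ directly by gluing $d$ copies of the doubled triangle $T(p,q,r)$ — clean when $\Xtil$ is highly symmetric, as for $\Xtil=S(r,r,r,r)$, where the gluing may be taken equivariant — and the factorization behind Propositions~\ref{VED:prop} and~\ref{thm:1.4:prop}: when $d$ and the entries of $\Pi_1,\Pi_2$ share a common factor $k$, a realization can be built as a composition of a degree-$k$ and a degree-$(d/k)$ cover, reducing several high-degree rows to already treated lower-degree ones.

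Concretely, I would spell out one representative construction for each of $X=S(2,3,7)$, $S(2,4,5)$, $S(2,4,6)$, $S(3,3,4)$, $S(3,3,5)$ (and a couple more for the remaining $S(2,3,r)$), deferring the routine remaining instances of each of the three constructions to~\cite{hyp1:proofs}. The main obstacle is not conceptual but combinatorial bookkeeping: there are genuinely $117$ separate constructions, and for the candidates with $d$ in the twenties and up (items such as \texttt{128}--\texttt{146}) an ad hoc dessin is unwieldy, so one must identify the recurring pattern and then check that it yields exactly the prescribed valence and face-length data and that no realizability obstruction — failure of transitivity of the monodromy, or of orientability — is triggered. Establishing that the uniform family construction hits every listed $\Pi_3$ and produces nothing spurious is where the real care is needed; the finitely many genuinely isolated cases left over are then settled individually by a short search, recorded in~\cite{hyp1:proofs}.
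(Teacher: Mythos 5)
Your core method is the paper's own: the published proof of Proposition~\ref{hyp:S(4)-to-S(3):real:prop} consists precisely in exhibiting a dessin d'enfant for every one of the $117$ candidates, printing only a sample (one per degree up to $12$, all prime degrees above $12$, the largest degrees) and deferring the remaining pictures to~\cite{hyp1:proofs}, with candidate \texttt{146} handled instead by the \textsf{GG} gluing of Proposition~\ref{GG:146:prop} (and \textsf{GG} used as an alternative route for \texttt{1}, \texttt{10}, \texttt{13}, \texttt{132}). So in outline you and the authors agree: explicit per-row constructions, representatives in the text, the rest archived.

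Where you go beyond the paper, however, the two labour-saving devices you propose are not sound as stated, and if you actually leaned on them the proof would have a gap. First, the composition shortcut is the \emph{converse} of Propositions~\ref{VED:prop} and~\ref{thm:1.4:prop}: those results say that divisibility of $d$ and of the entries of $\Pi_1,\Pi_2$ by $k$ forces a necessary condition on $\Pi_3$; they do not say that such divisibility lets you build a realization as a composition of a degree-$k$ and a degree-$(d/k)$ cover. Indeed items \texttt{31}, \texttt{49}, \texttt{50}, \texttt{69} satisfy exactly that divisibility hypothesis and are exceptional, and even when the necessary condition on $\Pi_3$ holds you must still construct both factor covers and verify that the composed branch datum is the prescribed one. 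Second, the ``uniform family'' production of dessins over $X=S(2,3,r)$ with $\Pi_1\in\{(2,\ldots,2),(2,\ldots,2,1)\}$, $\Pi_2\in\{(3,\ldots,3),(3,\ldots,3,1)\}$ cannot dispose of rows wholesale: the very same families contain the exceptional items \texttt{113}--\texttt{115}, \texttt{122}, \texttt{125}, \texttt{131}, so any move set (inserting length-two faces, merging faces) that claimed to hit every prescribed $\Pi_3$ would be proving too much. You do hedge on this point, but once the hedge is taken seriously the plan collapses back to drawing and checking a dessin for each individual row --- which is exactly the paper's proof, and is the part of your proposal that actually carries the argument.
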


\begin{proof}
For each of the 117 candidates we have been able to draw a dessin d'enfant proving realizability.
To avoid showing all of them, we will present here only one for each degree up to
$12$, one for each candidate having a prime degree greater than $12$, and some for
the largest degrees in the tables. See~\cite{hyp1:proofs} for all other realizable candidates.
In all our dessins, with notation as in Tables~\ref{hyp:S(4)-to-S(3):summary:1-12:tab}
to~\ref{hyp:S(4)-to-S(3):summary:117-146:tab}, we will associate white vertices
to the entries of partition $\Pi_1$ and black vertices to those in $\Pi_3$,
so the regions of the complement of the dessin will correspond to the entries of $\Pi_2$.

For degree up to $12$ the examples we have chosen to show correspond to items
\texttt{3}, \texttt{6}, \texttt{15}, \texttt{24},
\texttt{33}, \texttt{45}, \texttt{60}, and \texttt{62}.
The dessins proving that they are all realizable are provided in Figure~\ref{dessins:up:to:12:fig}.
\begin{figure}
    \begin{center}
    \includegraphics[scale=.6]{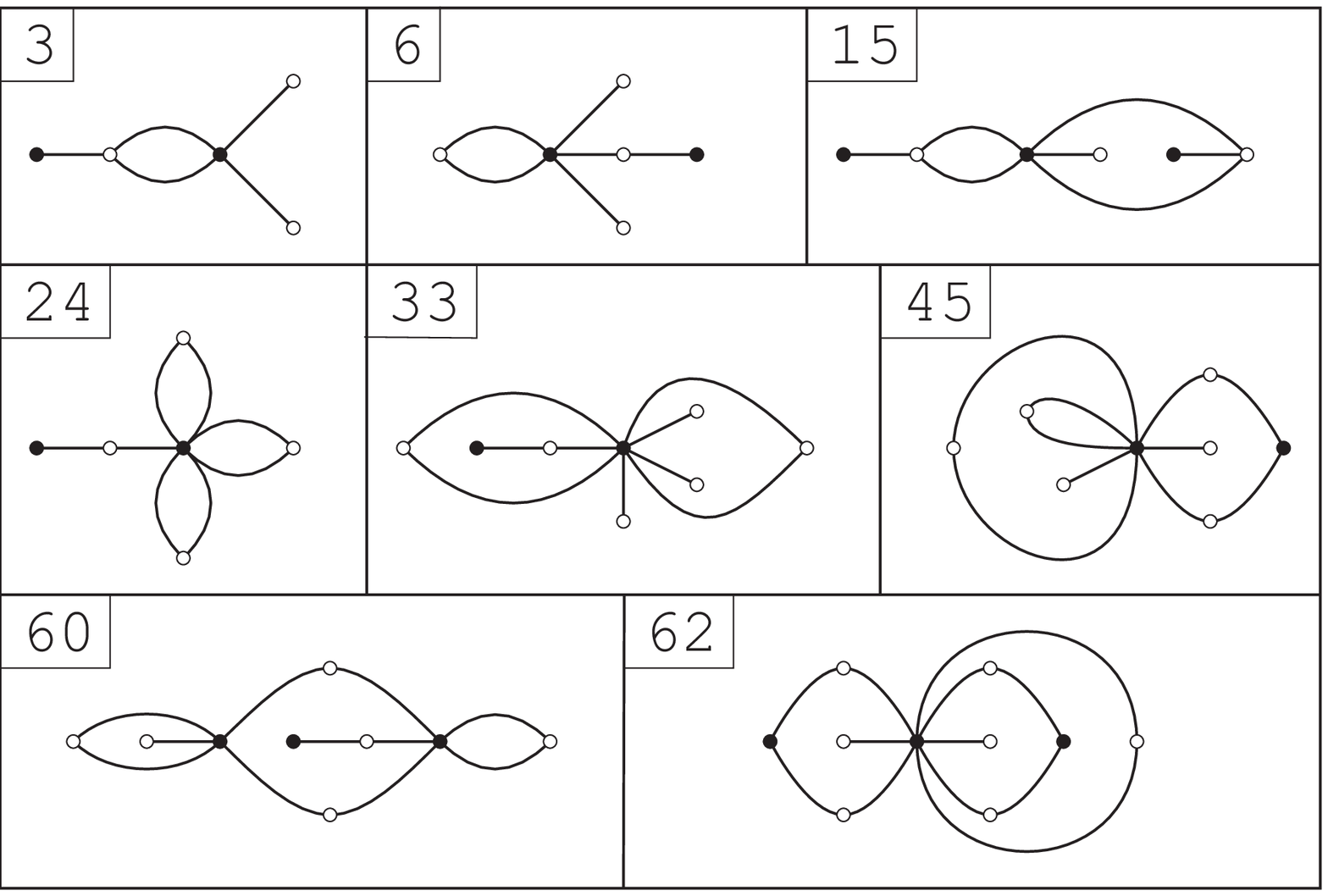}
    \end{center}
\mycap{Realization via \textsf{DE} of a sample of candidate covers for degree up to $12$.\label{dessins:up:to:12:fig}}
\end{figure}

For degree greater than $12$ the candidate covers with prime degree $d$ in
Tables~\ref{hyp:S(4)-to-S(3):summary:1-12:tab}
to~\ref{hyp:S(4)-to-S(3):summary:117-146:tab} are items
\texttt{83}, \texttt{84}, \texttt{85}, \texttt{86}, \texttt{105},
\texttt{106}, \texttt{117}, \texttt{137}, and \texttt{142}.
The dessins proving that they are all realizable are provided in Figure~\ref{dessins:more:than:12:fig}.
\begin{figure}
    \begin{center}
    \includegraphics[scale=.8]{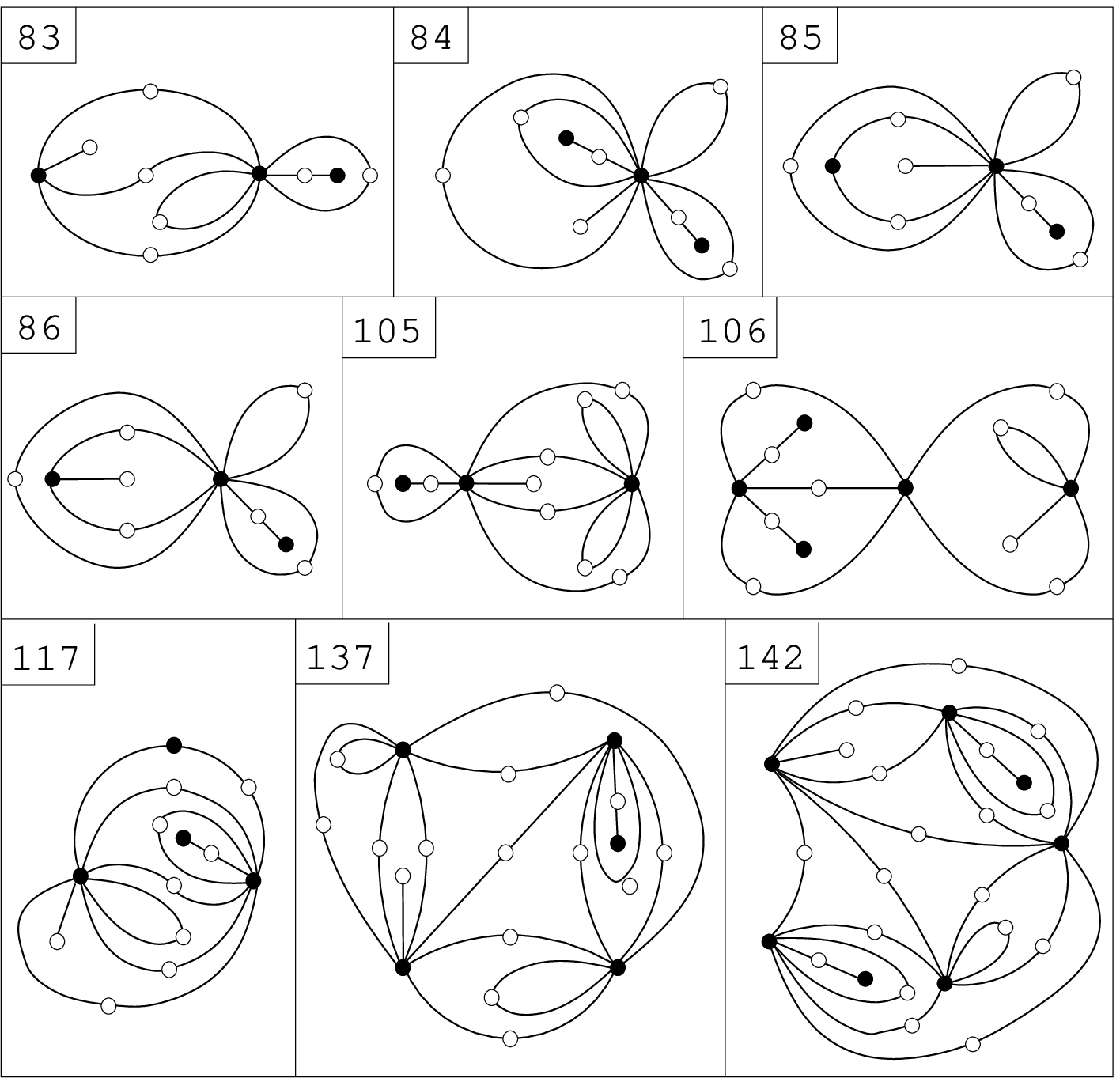}
    \end{center}
\mycap{Realization via \textsf{DE} of the candidate covers with prime degree larger than $12$.\label{dessins:more:than:12:fig}}
\end{figure}

For each of the candidates \texttt{143}, \texttt{144} and \texttt{145} a dessin d'enfant showing its realizability is shown
in Figure~\ref{dessins:deg:44:45:52:fig}.
\begin{figure}
    \begin{center}
    \includegraphics[scale=.64]{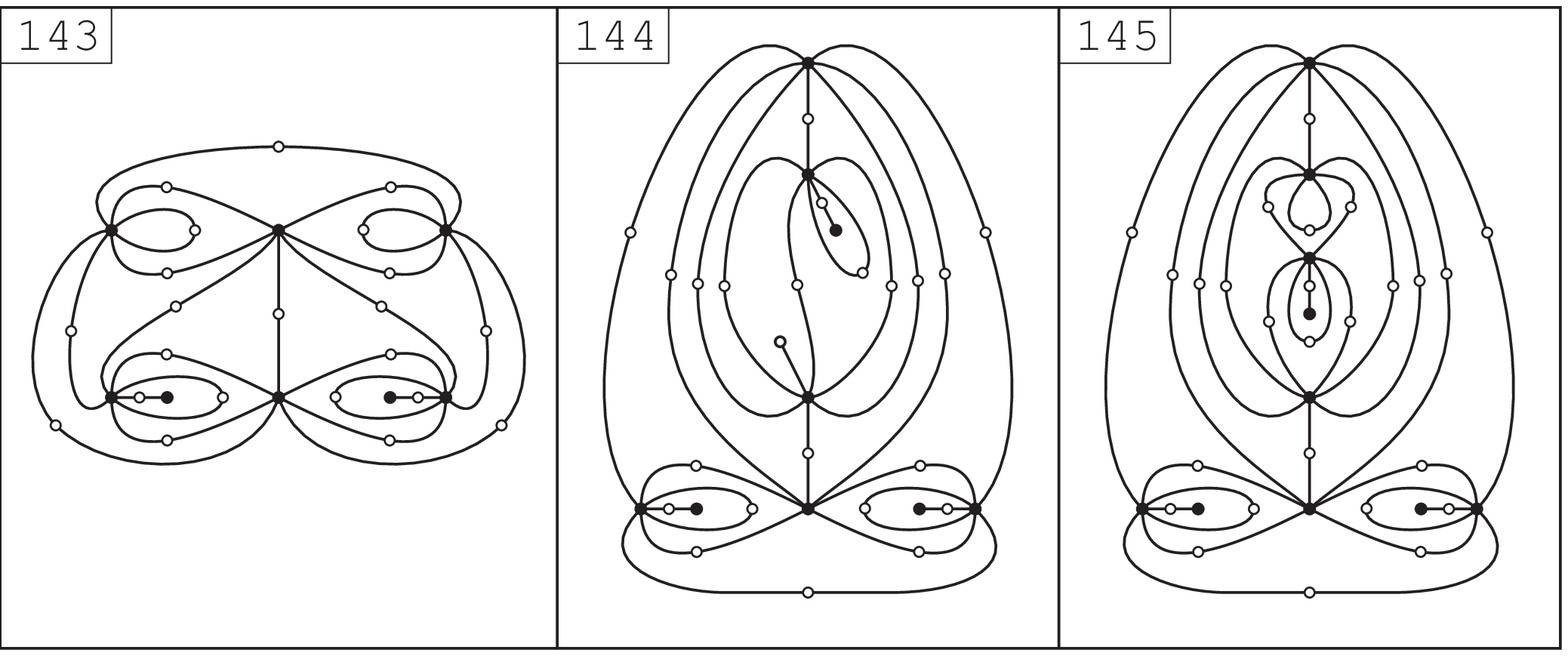}
    \end{center}
\mycap{Realization via \textsf{DE} of the candidate covers with degrees $44$, $45$ and $52$.\label{dessins:deg:44:45:52:fig}}
\end{figure}
Even if we have one, we do not show a dessin for candidate \texttt{146} because
a proof of its realizability will be given below in Proposition~\ref{GG:real:prop} using \textsf{GG}.\end{proof}

Let us turn to the following:

\begin{prop}\label{hyp:S(4)-to-S(3):excep:prop}
The $26$ candidate surface covers described in
Tables~\ref{hyp:S(4)-to-S(3):summary:1-12:tab}
to~\ref{hyp:S(4)-to-S(3):summary:117-146:tab} and indicated there to be exceptional
are indeed exceptional.
\end{prop}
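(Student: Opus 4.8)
The plan is to go through the $26$ candidates one at a time, producing for each a concrete obstruction to realizability taken from Section~\ref{techniques:sec}. They split into those that a numerical criterion excludes outright and those that must be excluded by a direct dessin d'enfant argument.

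\emph{Numerical obstructions.} First I would dispose of the candidates to which the \textsf{VED} criterion applies. This happens whenever $d$ is even and two of the three partitions can be arranged to have all entries even --- such a pair is visible in the tables for a sizeable part of the exceptional candidates, for instance items \texttt{25}, \texttt{31}, \texttt{49}, \texttt{50}, \texttt{69}, \texttt{97}, \texttt{98}, \texttt{116}, \texttt{126}. For each such candidate Proposition~\ref{VED:prop} says that realizability would force the remaining partition $\Pi_3$ to refine $(d/2,d/2)$, and one checks in one line that it does not: either the largest entry of $\Pi_3$ exceeds $d/2$, or no subset of $\Pi_3$ sums to $d/2$ at all for an elementary reason (e.g.\ when $\Pi_3$ is made of $3$'s and a single $1$ with $d/2\equiv 2\ (\mathrm{mod}\ 3)$). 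Next I would apply the composition criterion of Proposition~\ref{thm:1.4:prop} to items \texttt{35} and \texttt{36}: there $\Pi_1=\Pi_2=(3,3,3)$, so $d=9$ and all entries of $\Pi_1,\Pi_2$ are divisible by $k=3$, whereas $\Pi_3$ has an entry $\geqslant 4>d/k$, a contradiction.

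\emph{Dessins d'enfant.} For the candidates that survive --- items such as \texttt{23}, \texttt{57}, \texttt{72}--\texttt{79}, \texttt{113}, \texttt{114}, \texttt{115}, \texttt{122}, \texttt{125}, \texttt{131} --- I would argue directly that no dessin d'enfant on $S$ with white-vertex valences $\Pi_1$, black-vertex valences $\Pi_3$ and complementary-region lengths $2\Pi_2$ exists. The scheme is the same in every case: start by placing the vertex or region of largest weight; use the entries equal to $1$ in the partitions to pin down a bounded number of pendant edges whose far endpoints are then essentially forced; and follow the resulting finite tree of possibilities until each branch contradicts either the prescribed region lengths, or connectedness, or the edge count. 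For the degree-$12$ items \texttt{72}--\texttt{79} this reproduces, in the language of dessins, the block-decomposition analysis mentioned in Section~\ref{techniques:sec}; the routine case-checking I would relegate to~\cite{hyp1:proofs}, displaying here only one or two representative pictures.

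The step I expect to be the real obstacle is the dessin argument: the \textsf{VED} and composition checks are one-liners, but each non-existence statement requires setting up a genuinely branching (though finite) search and certifying that every branch closes off, and presenting this so that it is short and verifiable --- rather than a brute-force enumeration --- is where the care is needed. I should add that the remaining exceptional candidates listed in the tables --- among them items \texttt{28} and \texttt{41} --- fall outside the present count: they are treated separately below, by the \textsf{GM} method in Proposition~\ref{28:41:by:GM:prop} and, for one further candidate, by the \textsf{GG} method of Proposition~\ref{GG:prop}.
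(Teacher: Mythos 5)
Your numerical reductions are correct and coincide with the paper's: the \textsf{VED} criterion of Proposition~\ref{VED:prop} disposes of \texttt{25}, \texttt{31}, \texttt{49}, \texttt{50}, \texttt{69}, \texttt{97}, \texttt{98}, \texttt{116}, \texttt{126}, and the divisibility criterion of Proposition~\ref{thm:1.4:prop} of \texttt{35} and \texttt{36}, exactly as in the paper, and your one-line verifications are right. The genuine gap is everything after that. For the remaining candidates you offer only a generic search template (place the heaviest vertex, exploit the entries equal to $1$, check that every branch of a finite tree closes off) and defer the actual case analyses to~\cite{hyp1:proofs}; but those case analyses \emph{are} the content of the proposition, and at the top of the degree range they are exactly where the paper judged a bare dessin search inadequate. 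The paper does not attempt direct \textsf{DE} arguments for \texttt{23}, \texttt{28}, \texttt{72}--\texttt{78}, \texttt{122} or \texttt{131}: it quotes Propositions 1.2 and 1.3 of~\cite{PePe1} for \texttt{28} and \texttt{23}, the block-decomposition criterion of~\cite{PePe1} for \texttt{72}--\texttt{78}, and, as the main novelty, the \textsf{GM} move for \texttt{122} (reducing it to a degree-$15$ candidate already known to be exceptional by~\cite{PaPe}) and a hyperbolic gluing (\textsf{GG}) angle count with $24$ copies of the fundamental domain of $S(2,3,9)$ for \texttt{131}. Explicit forced-configuration dessins are given only for \texttt{41}, \texttt{57}, \texttt{79}, \texttt{113}--\texttt{115} and \texttt{125}. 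So asserting that a finite \textsf{DE} search ``would'' terminate with contradictions for, say, $d=21$ (\texttt{122}) and $d=24$ (\texttt{131}) is a plan rather than a proof; as written, the central non-existence claims remain unestablished.

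A secondary problem is the bookkeeping. The tables mark $29$ candidates as exceptional (the ``$26$'' in the statement is the paper's own inconsistency); your explicit lists account for $27$ of them, you exclude \texttt{28} and \texttt{41} plus an unnamed ``one further candidate,'' and these numbers cannot be reconciled. Moreover, in the paper \texttt{28} and \texttt{41} are proved \emph{inside} this very proposition (via~\cite{PePe1} and a dessin respectively; the \textsf{GM} arguments of Proposition~\ref{28:41:by:GM:prop} and the \textsf{GG} argument for \texttt{23} are alternative proofs presented within the same proof), so declaring them ``outside the present count'' and pointing to results that the paper places inside this proof leaves your version of the argument without a self-contained treatment of them. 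If you keep your route, you must either carry out the direct dessin non-existence proofs for all of \texttt{23}, \texttt{57}, \texttt{72}--\texttt{79}, \texttt{113}--\texttt{115}, \texttt{122}, \texttt{125}, \texttt{131} in checkable form, or import, as the paper does, the criteria of~\cite{PePe1} and the \textsf{GM}/\textsf{GG} techniques for the high-degree items.
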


\begin{proof}
For degree up to $20$ we could actually merely refer to the computer-generated census of Zheng~\cite{Zheng},
but we prefer to give theoretical proofs. To begin, we note that the \textsf{VED} criterion
of Proposition~\ref{VED:prop} shows exceptionality of candidates
\texttt{25}, \texttt{31}, \texttt{49}, \texttt{50}, \texttt{69}, \texttt{97}, \texttt{98}, \texttt{116}
and \texttt{126}. Candidates \texttt{35} and \texttt{36} are exceptional due to
Proposition~\ref{thm:1.4:prop}, while \texttt{23} and \texttt{28} due
to Propositions~1.3 and~1.2 of~\cite{PePe1}, respectively.
The \textsf{BD} criterion of~\cite[Section 5]{PePe1}
alluded to after Proposition~\ref{thm:1.4:prop} implies that candidates \texttt{72}-\texttt{78}
are exceptional. For degrees up to 20 this leaves out only candidates
\texttt{41}, \texttt{57}, \texttt{79}, \texttt{113}, \texttt{114} and
\texttt{115} for which we prove exceptionality using \textsf{DE}
in Figure~\ref{41:57:79:113:114:115:fig}.
Here we always associate white vertices to the entries of $\Pi_1$ and black ones
to those in $\Pi_2$, and we use their valences and the lengths of some of the
complementary regions to construct forced portions of dessin d'enfant in which
one sees offending lengths of the complementary regions and/or one finds it
impossible to get a connected dessin with the prescribed lengths of the complementary regions.
\begin{figure}
\centering
    \includegraphics[scale=.5]{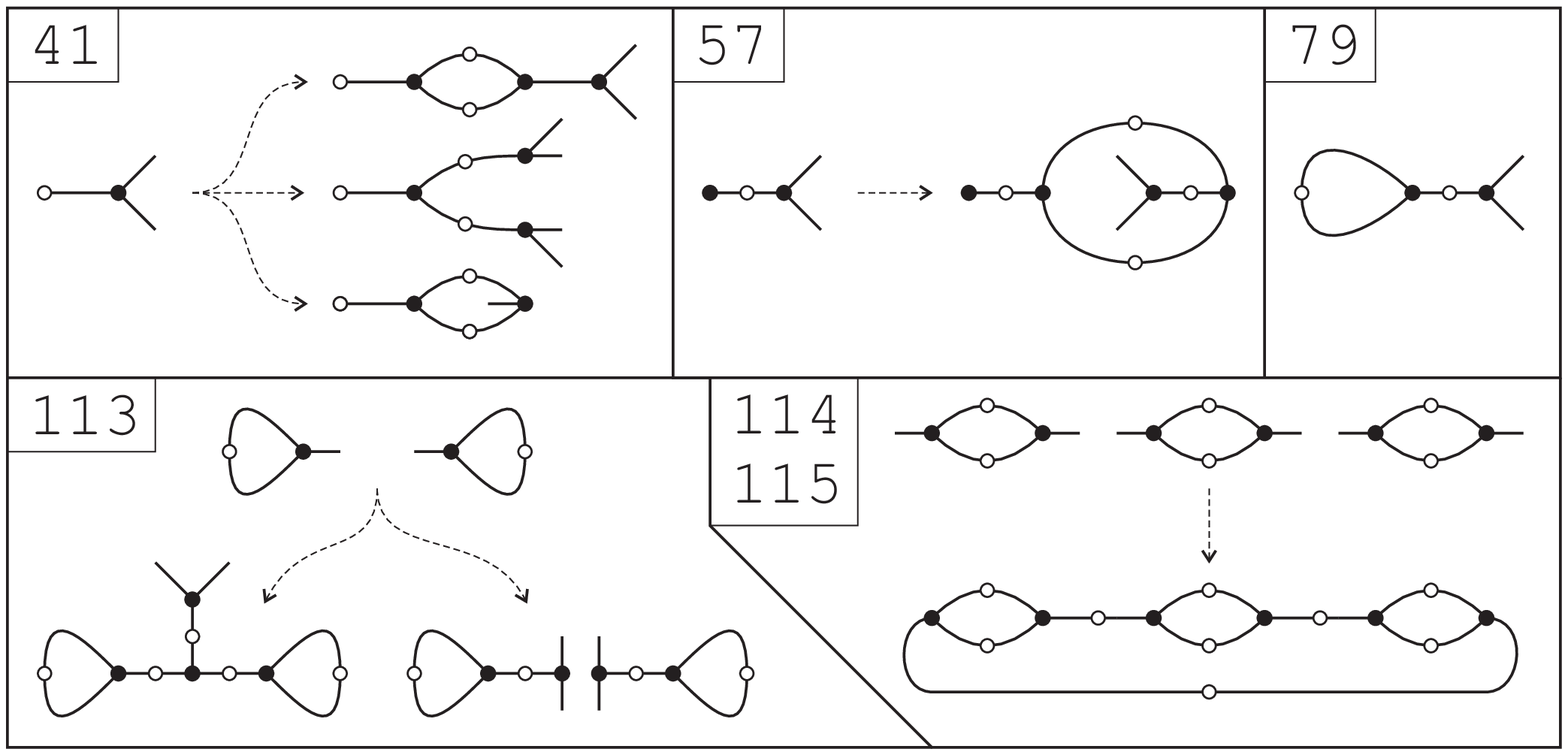}
\mycap{Exceptionality proofs via \textsf{DE}.\label{41:57:79:113:114:115:fig}}
\end{figure}

We are left to prove exceptionality of candidates \texttt{122}, \texttt{125} and \texttt{131},
in degrees $21$, $22$ and $24$ respectively. We start with \texttt{125}, for which we use
\textsf{DE} again. Let us try to construct a dessin with white vertices corresponding
to $\Pi_1=(2,\ldots,2)$ and black vertices corresponding to $\Pi_3=(8,8,4,1,1)$.
The proof that this is actually impossible is contained in Figure~\ref{125:fig}.
In part (\texttt{a}) we show that neither of the
black $1$-valent vertices can be joined to
the $4$-valent one (otherwise a region of length at least 4 would arise),
so both are joined to an $8$-valent one,
and in part (\texttt{b}) we show that the two $1$-valent black vertices
are joined to different $8$-valent black vertices
(for the same reason). Then in part (\texttt{c})
we show that for the region incident to only one black vertex, this cannot
be the $4$-valent one; note in particular that we use part (\texttt{b}) in the last passage.
This implies that there is an $8$-valent black vertex joined to a $1$-valent one
and incident to a region of length $2$, and in part (\texttt{d}) we show that this is impossible,
once again by contradicting the fact that all regions but one should have length $3$.
\begin{figure}
\centering
    \includegraphics[scale=.55]{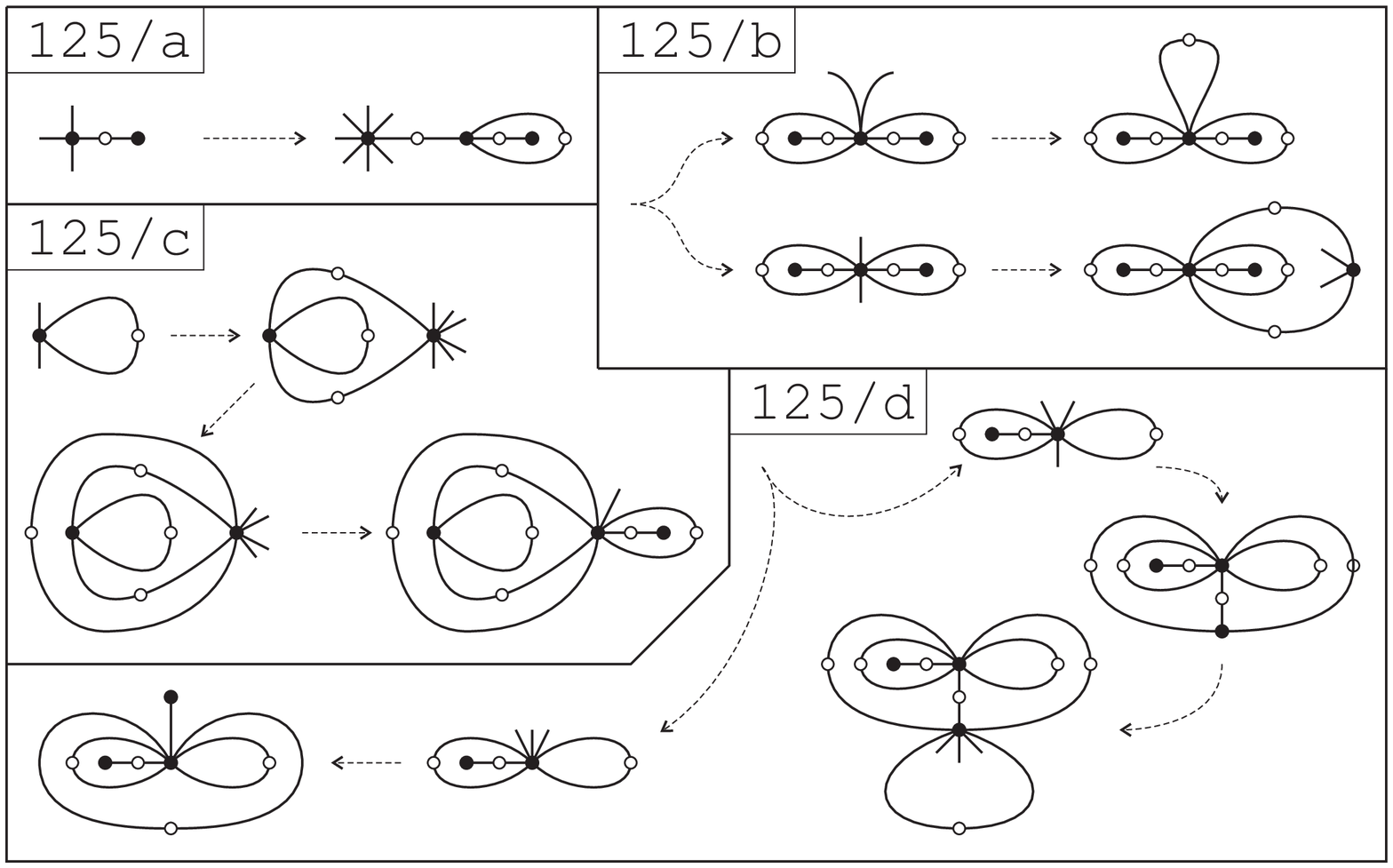}
\mycap{Exceptionality of \texttt{125} via \textsf{DE}.\label{125:fig}}
\end{figure}

To deal with \texttt{122} and \texttt{131} we will use the new techniques
\textsf{GM} and \textsf{GG} introduced in Section~\ref{techniques:sec},
but we prefer first to provide alternative proofs
of exceptionality and realizability in lower degree using these techniques,
to allow the reader to familiarize with them.
We begin with the following:

\begin{prop}\label{28:41:by:GM:prop}
The candidates \emph{\texttt{28}}, \emph{\texttt{41}} and \emph{\texttt{122}} can
be shown to be exceptional using the \emph{\textsf{GM}} technique.
\end{prop}

\begin{proof}
Recall that for \texttt{28}
the partitions are $\Pi_1=(2,2,2,2)$, $\Pi_2=(3,3,1,1)$ and $\Pi_3=(5,3)$, and let us try to construct
a dessin with white and black vertices associated to $\Pi_1$ and $\Pi_3$ respectively.
Since there are two regions of length $2$, at least
one of them has a $5$-valent black vertex, and we apply the move shown in
Figure~\ref{28:41:122:fig}. A dessin realizing \texttt{28} would then give
one realizing the candidate $S\argdotstoqua{10:1}{(2,\ldots,2),(3,3,3,1),(6,3,1)}S$,
which is exceptional by~\cite[Theorem 3.6]{PaPe}.
\begin{figure}
\centering
    \includegraphics[scale=.65]{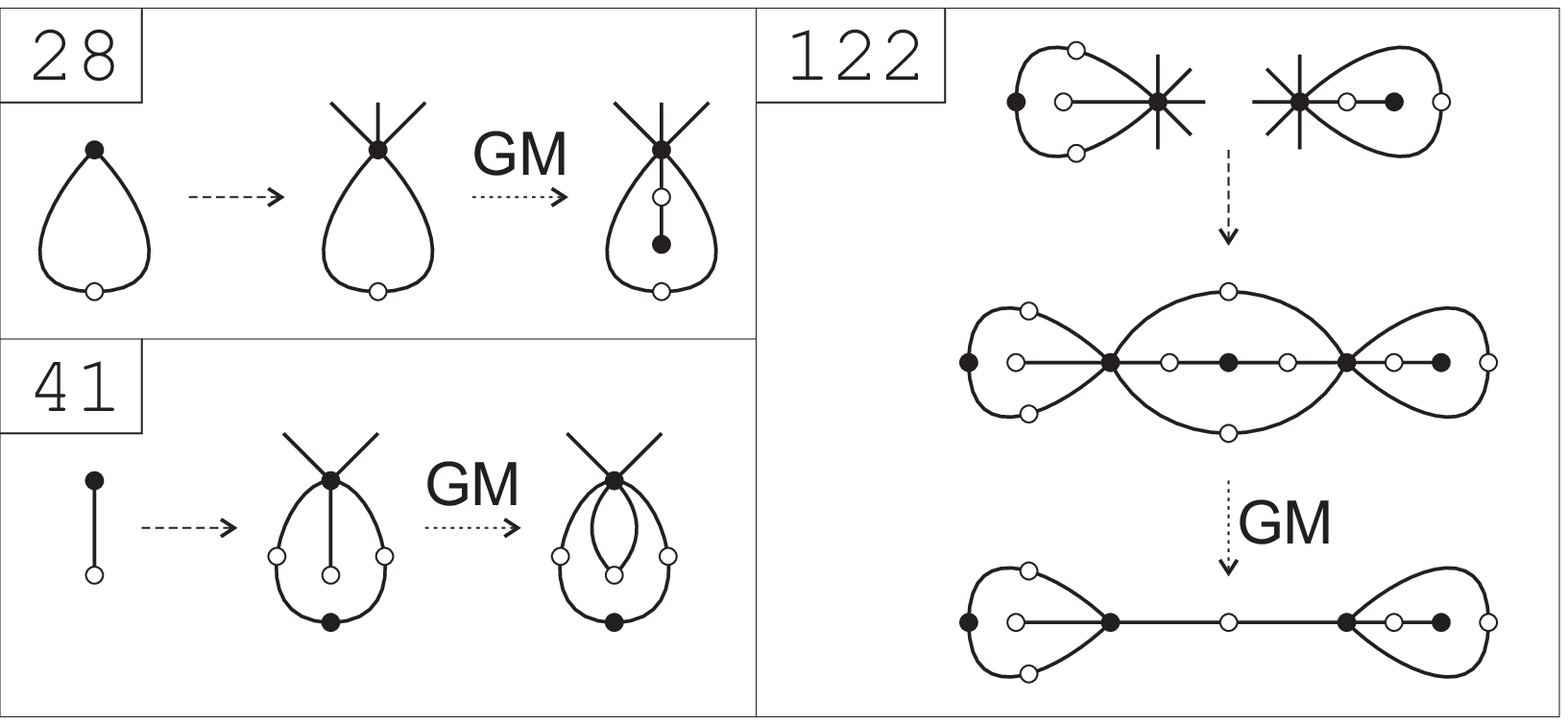}
\mycap{Exceptionality proofs via \textsf{GM}.\label{28:41:122:fig}}
\end{figure}

For \texttt{41} we proceed similarly, assigning white vertices to the partition $(2,2,2,2,1)$ and
black ones to $(5,2,2)$, leaving the partition $(3,3,3)$ for the regions. The $1$-valent
white vertex must be joined to a $5$-valent black one as in Figure~\ref{28:41:122:fig},
so we apply the move shown, which proves that if \texttt{41}
is realizable then $S\argdotstoqua{10:1}{(2,\ldots,2),(3,3,3,1),(6,2,2)}S$ also is,
which is false by the \textsf{VED} criterion.

Turning to \texttt{122} we recall that $d=21$,
$\Pi_1=(2,\ldots,2,1)$, $\Pi_2=(3,\ldots,3)$ and $\Pi_3=(8,8,2,2,1)$.
Again we use white for $\Pi_1$ and black for $\Pi_3$.
Two configurations as at the top of Figure~\ref{28:41:122:fig} must exist, and the only case in which the two edges
emanating from a valence-$2$ black vertex end on the same black vertex occurs in the first of these
configurations; therefore, a configuration as that to which we apply the move in
Figure~\ref{28:41:122:fig} occurs.
Note that at each valence-$8$ black vertex two emanating germs of edges are missing, because we apply
the move regardless of their position. After the move we get the candidate
$S\argdotstoqua{15:1}{(2,\ldots,2,1),(3,\ldots,3),(6,6,2,1)}S$, which is exceptional
by~\cite[Theorem 3.6]{PaPe}, and the proof is complete.\end{proof}

We now turn to the \textsf{GG} technique, that we first employ to provide
alternative proofs of three already shown realizability results, and of one
such result not explicitly given above. One more such realizability result
will be proved below in Proposition~\ref{GG:146:prop}.

\begin{prop}\label{GG:real:prop}
The candidates \emph{\texttt{1}}, \emph{\texttt{10}}, \emph{\texttt{13}}, and  \emph{\texttt{132}}
can be shown to be realizable using the \emph{\textsf{GG}} technique.
\end{prop}

\begin{proof}
For the first three candidates we show a realization in Figure~\ref{1:10:13:fig}, as we now explain.
\begin{figure}
\centering
    \includegraphics[scale=.75]{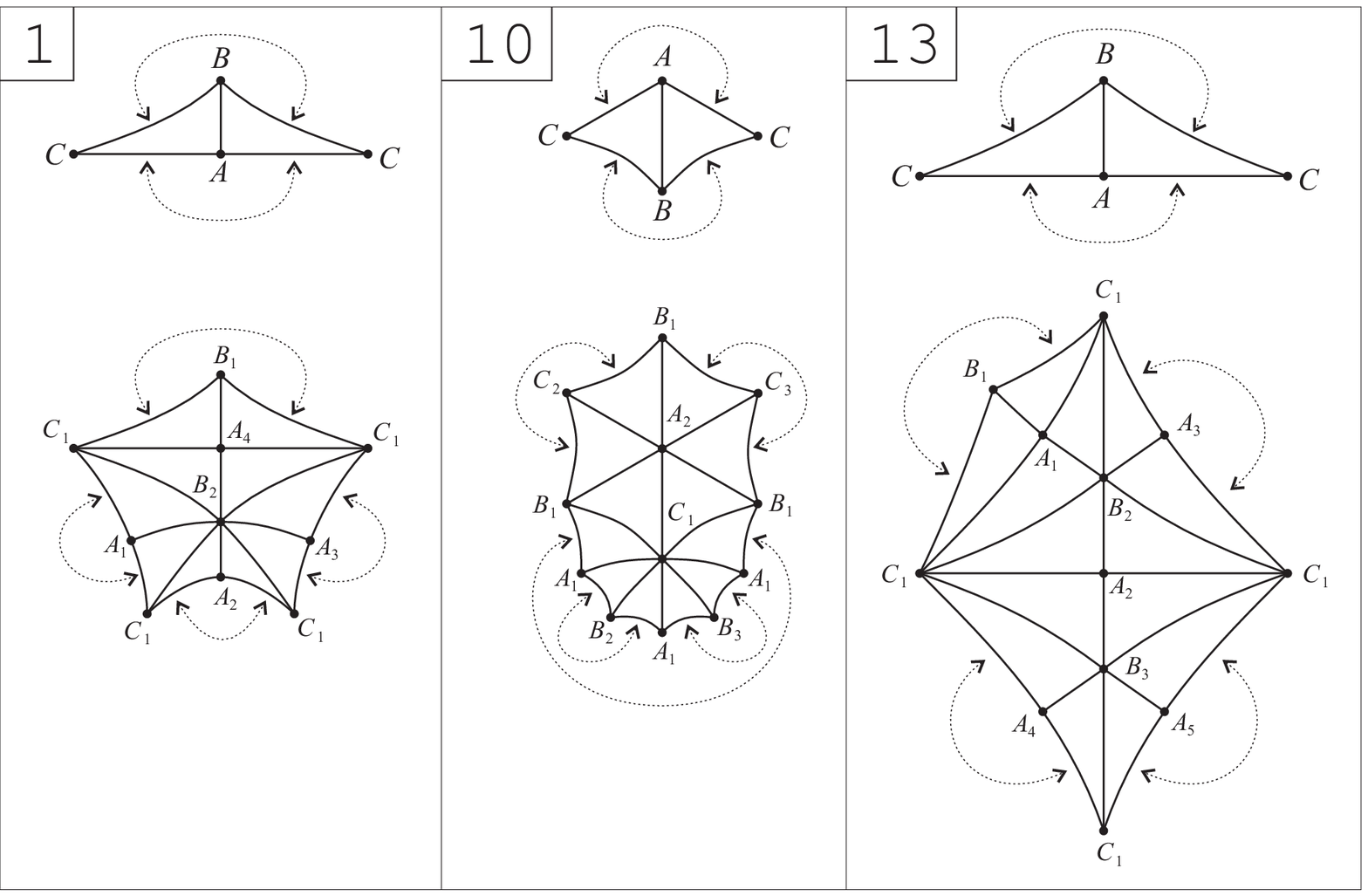}
\mycap{Realizability proofs via \textsf{GG}.\label{1:10:13:fig}}
\end{figure}
For \texttt{1} we should have a cover $S(2,2,2,4)\dotsto^{5:1} S(2,4,5)$
with covering instructions $(2,2,2)\dotsto 2$ and $4\dotsto 4$.
In the figure we show $S(2,4,5)$ as a gluing of two copies of the hyperbolic
triangle $T(2,4,5)$ with inner angles $\frac{\pi}2,\frac{\pi}4,\frac{\pi}5$.
The cone points of orders $2,4,5$ are respectively $A,B,C$.
And in the same figure we show $S(2,2,2,4)$ as a gluing of $10$ copies of $T(2,4,5)$.
This induces a covering $S(2,2,2,4)\myto^{5:1} S(2,4,5)$
with each $A_i,B_i,C_i$ mapped respectively to $A,B,C$. Since
the cone points of $S(2,2,2,4)$ are $A_1,A_2,A_3$ of order $2$ and $B_1$ of order $4$,
the required covering instructions are realized. Note that $A_4$ and $B_2$ are obviously
non-singular, and so is $C_1$, because there are $10$ angles $\frac{\pi}5$ incident to it.

The argument for \texttt{10} is similar. We must realize
$S(4,4,4,4)\dotsto^{6:1} S(3,4,4)$ with the instructions
$(4,4)\dotsto 4$ and $(4,4)\dotsto 4$ (which are automatic in this case).
In Figure~\ref{1:10:13:fig} the cone points of $S(3,4,4)$ are $A,B,C$ of orders $3,4,4$
respectively, and those of $S(4,4,4,4)$ are $B_2,B_3,C_2,C_3$ all of order $4$,
so the cover is as desired.

For \texttt{13} we must realize $S(2,2,2,3)\dotsto^{7:1} S(2,3,7)$
with, of course, $(2,2,2)\to 2$ and $3\to 3$. Here the cone points are $A,B,C$  of orders
$2,3,7$ in $S(2,3,7)$, and $A_3,A_4,A_5$ of order $2$ and $B_1$ of order $3$ in $S(2,2,2,3)$.

We now turn to \texttt{132}, which is treated in Figure~\ref{132:fig}.
\begin{figure}
\centering
    \includegraphics[scale=.75]{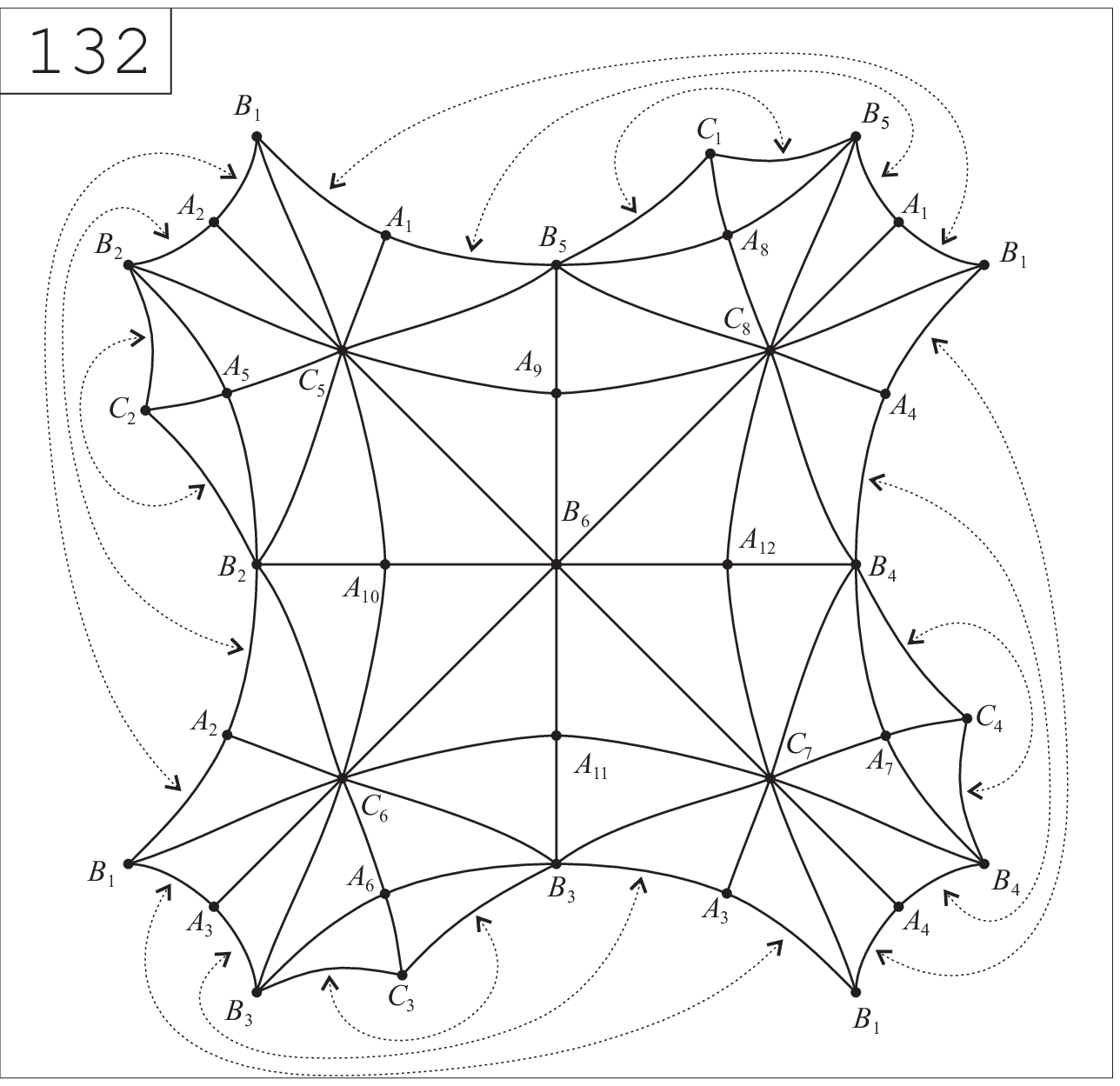}
\mycap{Realizability of candidate \texttt{132} via \textsf{GG}.\label{132:fig}}
\end{figure}
The cover to be realized is $S(5,5,5,5)\dotsto^{24:1}S(2,4,5)$ with (of course)
instructions $(5,5,5,5)\dotsto 5$. For $S(2,4,5)$ the fundamental domain is the same used for
\texttt{1}, and the reader can check that Figure~\ref{132:fig} contains $48$ copies
of $T(2,4,5)$ giving $S(5,5,5,5)$ with cone points at $C_1,C_2,C_3,C_4$.
\end{proof}

\begin{prop}\label{GG:excep:prop}
The candidates \emph{\texttt{23}} and \emph{\texttt{131}}
can be shown to be exceptional using the \emph{\textsf{GG}} technique.
\end{prop}

\begin{proof}
To \texttt{23} we associate $S(2,3,3,6)\dotsto^{8:1} S(2,4,6)$
with  $(2,3,3,6)\dotsto 6$. We show the fundamental domain $D$ of $S(2,4,6)$
in Figure~\ref{23:fig}, with cone points of order $2,4,6$ at $A,B,C$.
\begin{figure}
\centering
    \includegraphics[scale=.75]{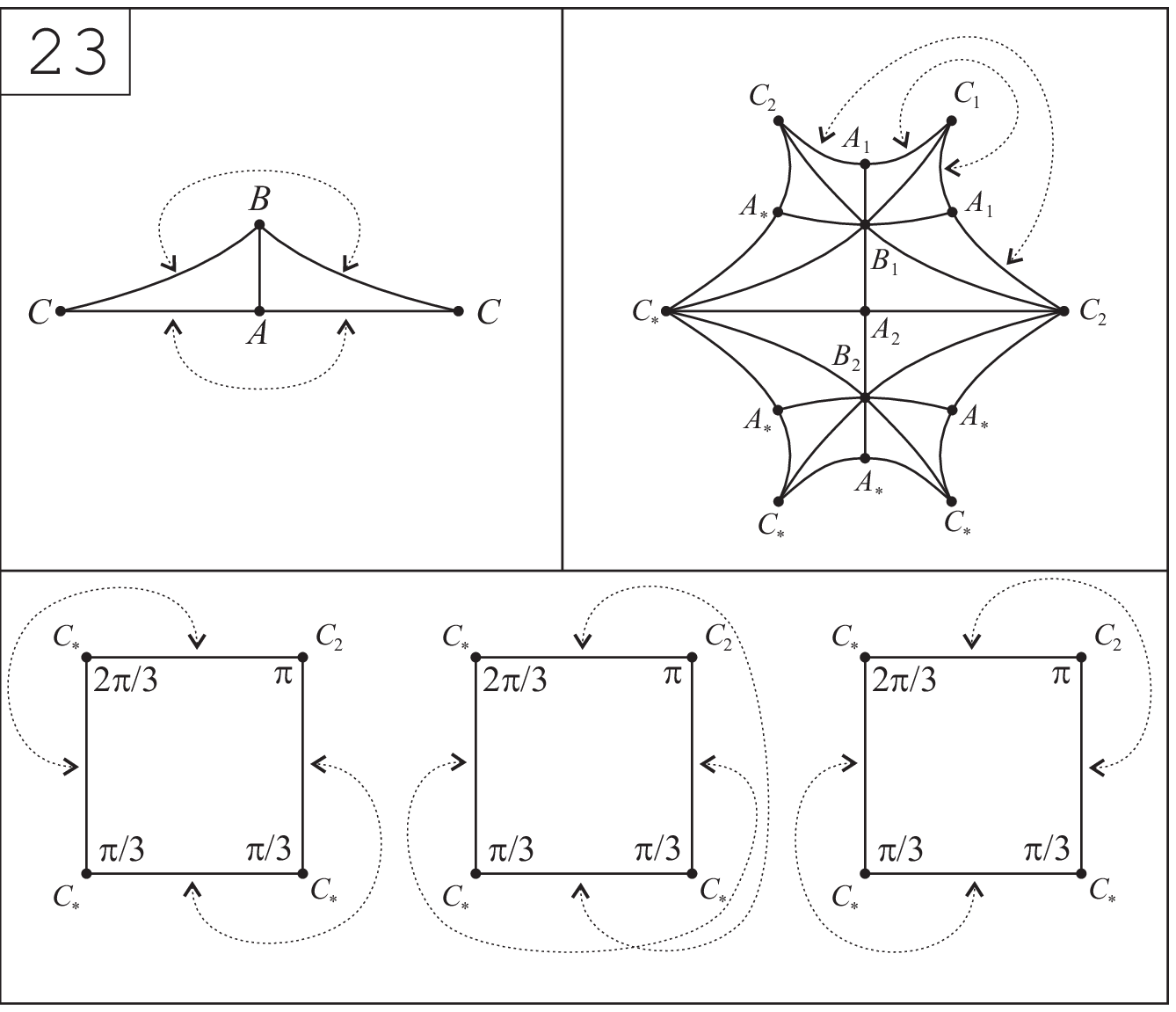}
\mycap{Exceptionality of candidate \texttt{23} via \textsf{GG}.\label{23:fig}}
\end{figure}
Since, in a realization of the candidate, $B$ is covered by two non-singular points
(and four singular ones),
of the $8$ copies of $D$ there must be $4$ around some $B_1$ and the other four around some $B_2$.
Now $A$ is also covered by non-singular points, which implies that no edge $A_*C_*$ is glued to the adjacent
$A_*C_*$. In other words, each (straight) segment $C_*A_*C_*$ can be regarded as a single
edge, and gets glued to another such segment. Since $S(2,3,3,6)$ is connected, the
two blocks of $4$ copies of $D$ already described are glued together, forming a
single block of $8$ copies of $D$ with $6$ free edges of type $C_*A_*C_*$.
But, to get a cone point of order $6$, at some top or bottom $C_*$,
say $C_1$, we must have a gluing
as shown in the figure. We now abandon hyperbolically correct pictures and use combinatorial ones,
but we keep track of geometry. To do this we note that the block of $8$ copies of $D$,
after performing the gluing shown, becomes a ``quadrangle'' (because we can ignore the $A_*$'s)
with inner angles $\pi,\frac23\pi,\frac\pi3,\frac\pi3$
(and an inner cone point of order $6$, not shown in the picture). As shown in Figure~\ref{23:fig},
there are now three ways to pair the edges of this quadrangle by orientation-reversing maps.
Since we have already realized a cone point at $C_1$ of angle $\frac{2\pi}6$, these gluings
give rise to a hyperbolic cone surface as follows:
\begin{itemize}
\item based on the sphere, with cone angles $\frac{2\pi}6,\frac{2\pi}6,\frac{2\pi}3,\frac{4\pi}3$;
\item based on the torus with cone angles $\frac{2\pi}6,\frac{7\pi}3$;
\item based on the sphere with cone angles $\frac{2\pi}6,\frac{2\pi}6,\frac{2\pi}2,\frac{2\pi}2$.
\end{itemize}
Neither of these is the desired $S(2,3,3,6)$, and the exceptionality of \texttt{23}
is proved. We note however that if one disregards geometry the previous gluings do give rise
of realizations of candidate branched covers of degree $8$, but not of the desired one. Namely:
\begin{itemize}
\item $S\argdotstoqua{8:1}{(2,2,2,2),(4,2,1,1),(4,4)}S$, with the associated Euclidean $S(2,4,4)\dotsto S(2,4,4)$;
\item $T\argdotstoqua{8:1}{(2,2,2,2),(7,1),(4,4)}S$, with associated $T(7)\dotsto S(2,4,7)$;
\item $S\argdotstoqua{8:1}{(2,2,2,2),(3,3,1,1),(4,4)}S$ with the associated spherical $S(3,3)\dotsto S(2,3,4)$.
\end{itemize}

Turning to $\texttt{131}$, the associated candidate is $S(3,9,9,9)\dotsto^{24:1} S(2,3,9)$ with instructions
$(3,9,9,9)\dotsto 9$. We show in Figure~\ref{131:fig} its fundamental domain $D$, with cone orders $2,3,9$ at $A,B,C$.
\begin{figure}
\centering
    \includegraphics[scale=.75]{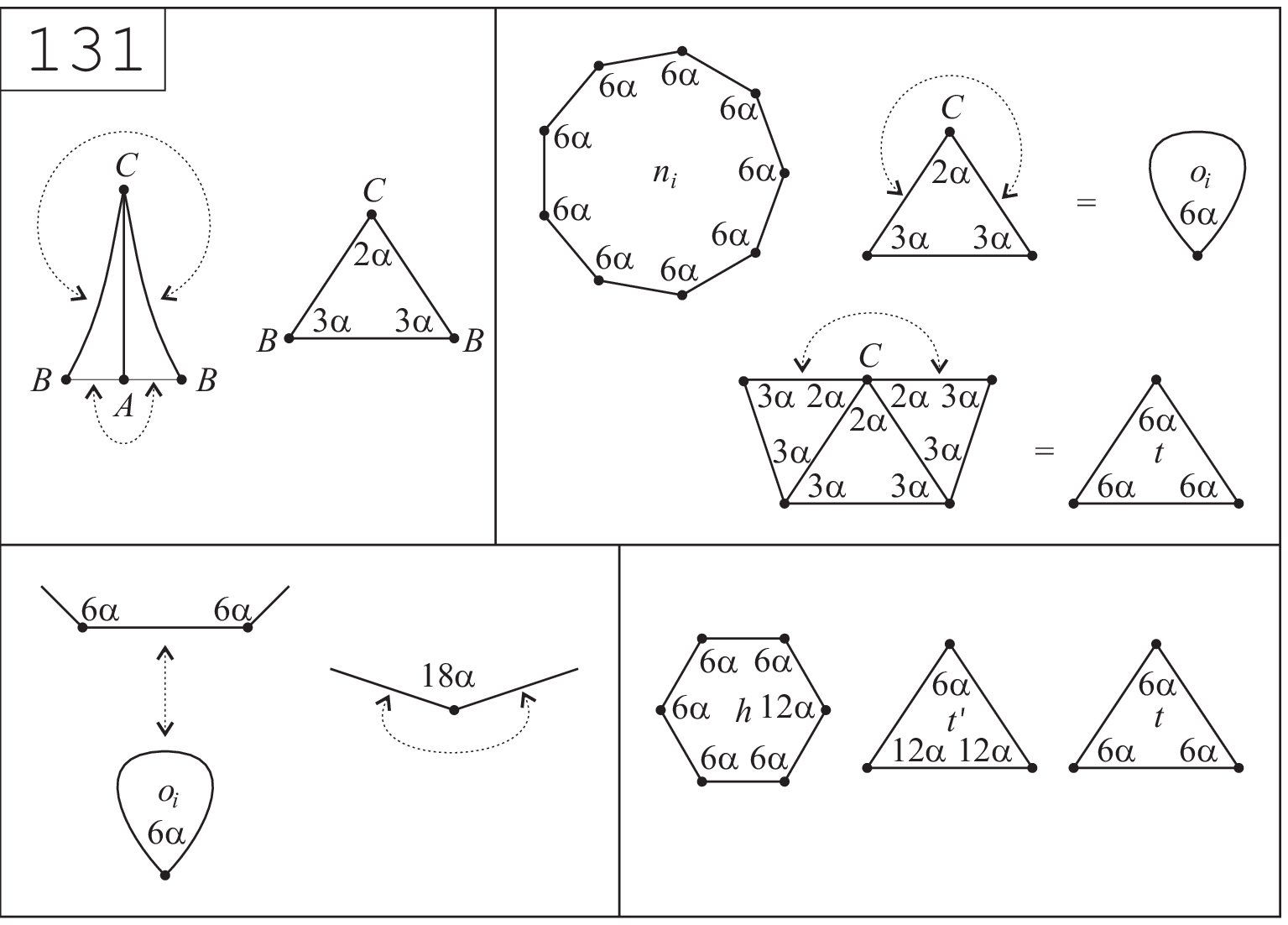}
\mycap{Exceptionality of \texttt{131} via \textsf{GG}.\label{131:fig}}
\end{figure}
As for \texttt{23}, using the fact that $A$ must be covered by non-singular points, we see we can forget $A$, and
redraw $D$ as a combinatorial triangle, but keeping track of its geometry by writing the angles at the
vertices, with $\alpha=\frac{\pi}9$. Since $C$ is covered by two non-singular points,
one of order $3$ and three of order $9$, the $24$ copies of $D$ can be grouped in two groups of $9$
copies giving the blocks $n_1,n_2$ with nine edges, 3 copies giving blocks $o_1,o_2,o_3$ with one
edge, and 3 copies giving a block $t$ with three edges. Note that at this stage all
vertices should cover $B$ and they all have angle $6\alpha=\frac23\pi$.
We should now assemble the blocks so that at each glued vertex the total angle is $18\alpha=2\pi$, because
all points covering $B$ are non-singular. This implies that gluing the edge of some
$o_i$ to another edge $e$ of some block forces the two edges of the block adjacent to
$e$ to be glued together. Therefore, no $o_i$ is glued to $t$, and (up to change of notation)
$o_1$ and $o_2$ are glued to $n_1$, while $o_3$ is glued to $n_2$. This gives two new blocks
$h$ and $t'$, with angles as shown, that we must now use with $t$.
The edge of $t'$ whose ends have angle $12\alpha$ must be glued to an edge whose ends have angle $6\alpha$,
and two more gluings are then forced. This implies that the said edge of $t'$ is not glued to $t$, and there
are two ways up to symmetry to glue it to an edge of $h$. One of them gives a triangle
with angles $6\alpha,6\alpha,24\alpha$, and the other one a triangle with angles $6\alpha,12\alpha,18\alpha$,
which easily implies that the process cannot be carried to the end.
\end{proof}

This concludes the proof of Proposition~\ref{hyp:S(4)-to-S(3):excep:prop}.
\end{proof}

As already announced we now show via \textsf{GG} that our highest degree candidate
is realizable, which was not explicitly done within the proof of Proposition~\ref{hyp:S(4)-to-S(3):real:prop}.

\begin{prop}\label{GG:146:prop}
The candidate \emph{\texttt{146}} can be shown to be realizable using the \emph{\textsf{GG}} technique.
\end{prop}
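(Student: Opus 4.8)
We recall that candidate \texttt{146} has degree $d=60$, partitions $\Pi_1=(2,\ldots,2)$, $\Pi_2=(3,\ldots,3)$ and $\Pi_3=(7,7,7,7,7,7,7,7,1,1,1,1)$, and associated orbifold cover $S(7,7,7,7)\dotsto^{60:1}S(2,3,7)$ with covering instructions $(7,7,7,7)\dotsto 7$. By Proposition~\ref{GG:prop} it is enough to realize $S(7,7,7,7)$ by gluing $60$ copies of a fundamental domain $D$ of $S(2,3,7)$ so as to match these instructions. We take $D$ to be the double of the hyperbolic triangle $T(2,3,7)$, with inner angles $\frac{\pi}{2},\frac{\pi}{3},\frac{\pi}{7}$ at vertices $A,B,C$, across the edge joining $A$ and $C$; then $D$ is a hyperbolic quadrilateral whose interior angle at $A$ equals $\pi$ (so the two sides of $D$ at $A$, both of length $c=|AB|$, form one geodesic segment of length $2c$), whose interior angle at $C$ equals $\frac{2\pi}{7}$ (with the two sides of $D$ at $C$ both of length $a=|BC|$), and which has two further vertices $B,B'$ of angle $\frac{\pi}{3}$. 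Folding $D$ onto $S(2,3,7)$ is effected by pairing the two length-$c$ sides by the half-turn about $A$ and the two length-$a$ sides by the rotation of angle $\frac{2\pi}{7}$ about $C$; lifting, the covering instructions translate into the following conditions on the side pairings of the copies $D_1,\dots,D_{60}$: the pairing of the length-$c$ sides is a fixed-point-free involution, so that every preimage of $A$ has cone angle $\pi+\pi=2\pi$, i.e.\ is nonsingular, as demanded by $\Pi_1=(2,\ldots,2)$; the pairing of the length-$a$ sides is a permutation of cycle type $(7^{8},1^{4})$, so that the preimages of $C$ are eight nonsingular points (each one where seven copies of $D$ surround a common $C$-vertex) together with four cone points of order $7$ (each the single $C$-vertex of a copy glued onto itself); and the resulting closed surface is the $2$-sphere.

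We plan to build the configuration around the cycle structure of the pairing of the length-$a$ sides. The eight $7$-cycles produce eight \emph{rosettes}, each consisting of seven copies of $D$ glued cyclically around a common nonsingular preimage of $C$; since the vertex $A$ of $D$ is flat, a rosette is a hyperbolic heptagon with all seven interior angles equal to $\frac{2\pi}{3}$ and all seven sides of length $2c$ (it is tiled by $14$ copies of $T(2,3,7)$, whence it has area $5\pi-\frac{14\pi}{3}=\frac{\pi}{3}$, which is $14$ times the area of $T(2,3,7)$), carrying a flat marked point covering $A$ at the midpoint of each side and a nonsingular marked point covering $C$ at its centre, its seven vertices being of angle $\frac{2\pi}{3}$ and still covering $B$ incompletely. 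The four fixed points produce four \emph{monogons}, each a single copy of $D$ whose two length-$a$ sides are identified by the rotation of angle $\frac{2\pi}{7}$ about $C$: this is a disc bounded by one geodesic side of length $2c$, with one boundary corner of angle $\frac{2\pi}{3}$ covering $B$, a flat marked point covering $A$ on the boundary, and an interior cone point of order $7$. Thus the problem reduces to gluing eight heptagons and four monogons along their $60$ sides, all of the common length $2c$, by orientation-reversing hyperbolic isometries, so that the $60$ boundary corners — all of angle $\frac{2\pi}{3}$ — are identified in triples, and so that the assembly is connected; note that each monogon, glued to one heptagon side, by itself produces such a triple, because the loop-like boundary of a monogon forces the two endpoints of that heptagon side to be identified with the single monogon corner. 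The explicit gluing pattern carrying this out will be displayed in Figure~\ref{146:fig}, drawn combinatorially while recording all angles, in the spirit of the exceptionality proof of \texttt{131}.

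It then remains to read off from Figure~\ref{146:fig} that the assembly is connected and that every boundary corner lies in an orbit of exactly three corners. Since all gluings are isometries between hyperbolic segments of equal length, and the cone angle at every identified vertex is then precisely the one required ($2\pi$ over $A$ and over $B$, and $2\pi$ or $\frac{2\pi}{7}$ over $C$), the quotient is a genuine hyperbolic cone surface, and an Euler-characteristic count gives $\chi=V-E+F=20-30+12=2$; hence it is based on the sphere and its only cone points are the four order-$7$ interior points of the monogons, so it is $S(7,7,7,7)$, and the folding map realizes the cover with the prescribed instructions. By Proposition~\ref{GG:prop}, candidate \texttt{146} is therefore realizable. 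The only genuinely delicate point is the combinatorial one: exhibiting a connected gluing of the eight heptagons and four monogons in which all corners fall into triples — equivalently, one yielding genus zero. Once such a pattern is fixed the hyperbolic geometry is automatic, precisely because $D$ is a fixed hyperbolic polygon, all $60$ free sides have the same length $2c$, and the vertex angle conditions have been built into the construction.
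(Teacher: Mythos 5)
Your setup is exactly the paper's: the blocks you call rosettes and monogons are precisely the paper's blocks $h_1,\ldots,h_8$ and $o_1,\ldots,o_4$ (heptagons with all corners of angle $\frac{2\pi}{3}$ and one-edged blocks with a single corner of angle $\frac{2\pi}{3}$ and an interior cone point of order $7$), and your reductions are sound: once a connected gluing of these twelve blocks is found in which the $60$ corners fall into triples, the angle condition, the Euler characteristic count $20-30+12=2$, and the fixed hyperbolic shape of $D$ do make everything else automatic, so the quotient is $S(7,7,7,7)$ and Proposition~\ref{GG:prop} applies. Your observation that gluing a monogon onto a heptagon side forces the two endpoints of that side to be identified with the monogon corner is also correct, and is exactly the mechanism the paper uses (compare the analogous forcing step in the exceptionality proof of candidate \texttt{131}).

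The problem is that you never produce the gluing. You write that ``the explicit gluing pattern carrying this out will be displayed in Figure~\ref{146:fig}'' and that it ``remains to read off'' connectedness and the triple condition from that figure, but no pattern is described, and exhibiting one is the entire content of the proposition: everything before that point is a reformulation of what must be shown, not a proof that it can be done. (Indeed, for candidates \texttt{131} and \texttt{168} the same kind of block bookkeeping ends in a proof of \emph{exceptionality}, so existence of a valid pattern is not a formality.) The paper's proof consists precisely of the step you skip: glue each $o_i$ onto an edge of $h_{i+4}$, note that this completes the resulting vertex (angle $6\alpha=2\pi$ with $\alpha=\frac{\pi}{3}$) and hence forces the two adjacent heptagon edges to be identified with each other, producing four-edged blocks $q_i$ with angles $4\alpha,2\alpha,2\alpha,2\alpha$; then glue the two edges of $q_i$ adjacent to the $4\alpha$ vertex to $h_i$, pair the resulting blocks $h'_i$ into $q'_1,q'_2$, and glue these together, checking at each stage that only total angle $6\alpha$ is created at identified vertices. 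To repair your proof you must either reproduce such an explicit chain of gluings (with the forced identifications tracked as above) or otherwise demonstrate constructively that a connected, triple-respecting pattern exists; asserting it via an unconstructed figure leaves the essential gap.
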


\begin{proof}
We must realize $S(7,7,7,7)\dotsto^{60:1} S(2,3,7)$ with $(7,7,7,7)\to 7$.
If $A,B,C$ are the cone points of orders $2,3,7$, the fact that $A$ is covered by non-singular
points implies as above that we can ignore it, then we must use $60$ copies of a triangle with vertices
$B,B,C$ and angles $\frac\pi3,\frac\pi3,\frac27\pi$. Taking into account the way $C$ is covered we then
get blocks $h_i$ for $i=1,\ldots,8$ and $o_i$ for $i=1,2,3,4$, as shown in Figure~\ref{146:fig},
with $\alpha=\frac\pi3$, that we must assemble creating non-singular points only,
that is, having angle $6\alpha=2\pi$.
\begin{figure}
\centering
    \includegraphics[scale=.75]{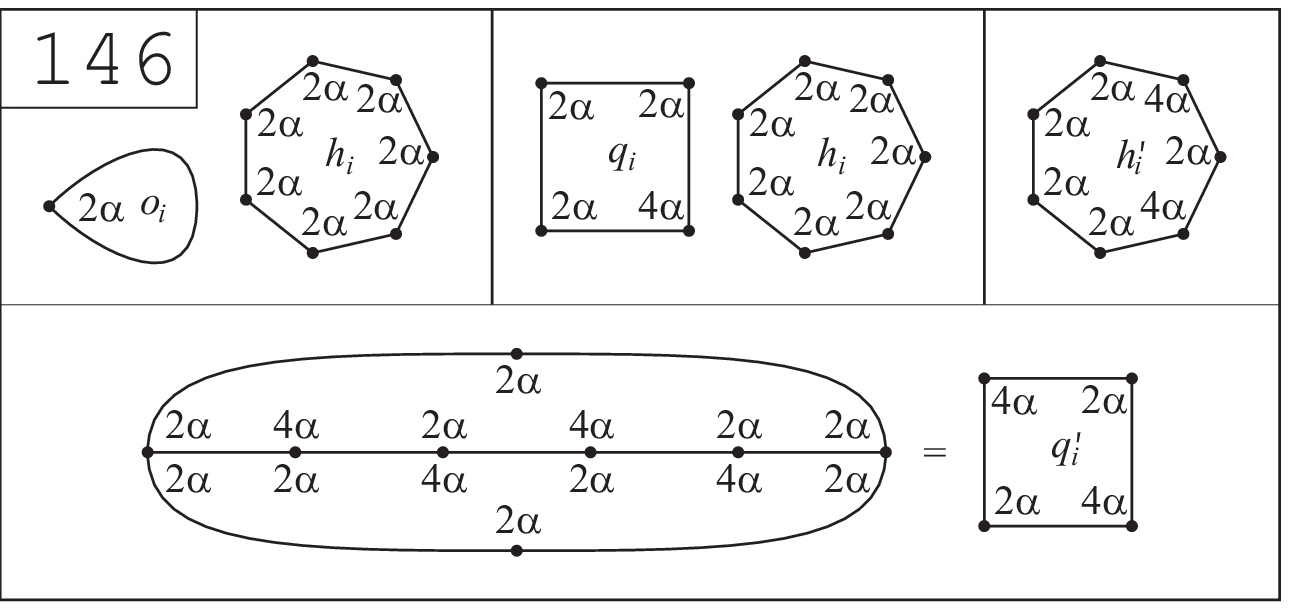}
\mycap{Realizability of \texttt{146} via \textsf{GG}.\label{146:fig}}
\end{figure}
Gluing $o_i$ to $h_{i+4}$
we get the blocks $q_i$ for $i=1,2,3,4$, and we still have $h_i$ for $i=1,2,3,4$.
Now we glue the two edges of $q_i$ separated by the vertex with angle $4\alpha$ to an edge of
$h_i$, getting the blocks $h'_i$ for $i=1,2,3,4$. We glue them in pairs as illustrated, getting
two blocks $q'_i$ for $i=1,2$, that we can now glue together to finish the process.
\end{proof}

The analysis of candidates with associated $S(\alpha,\beta,\gamma,\delta)\dotsto S(p,q,r)$ is complete, so
we turn to the case $T(\alpha)\dotsto S(p,q,r)$. We begin with:

\begin{prop}\label{hyp:T(1)-to-S(3):real:prop}
The $17$ candidate surface covers described in
Table~\ref{hyp:T(1)-to-S(3):summary:tab} and indicated there to be realizable
are indeed realizable.
\end{prop}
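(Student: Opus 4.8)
The plan is to exhibit, for each of the $17$ candidates marked realizable in Table~\ref{hyp:T(1)-to-S(3):summary:tab}, an explicit realization, using dessins d'enfant (\textsf{DE}) for the lower degrees and the monodromy representation (\textsf{MR}) technique for the remaining ones. First I would record the structural observation that, by~(\ref{c=1:eq}), in each of these candidates two of the three partitions consist of a single repeated entry while the third is a repeated entry together with exactly one smaller (divisor) entry; this is precisely the very special shape of permutations for which Monge's program~\cite{Monge}, recalled in the \textsf{MR} paragraph of Section~\ref{techniques:sec}, is tailored, so that realizability can be settled computationally up to degree $200$, far beyond the degree $28$ occurring here.

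Next, following the \textsf{DE} correspondence, for each candidate of small degree (say up to $12$) I would draw a bipartite graph $D\subset T$ on the torus whose white vertices have valences prescribed by $\Pi_1$, whose black vertices have valences prescribed by $\Pi_3$, and whose complementary discs have lengths given by $2\Pi_2$. Since the torus admits such pictures as planar diagrams with the standard edge identifications, these can be displayed compactly in one figure; the remaining low-degree dessins would be deferred to~\cite{hyp1:proofs}, exactly as was done for the $S(\alpha,\beta,\gamma,\delta)\dotsto S(p,q,r)$ family in Proposition~\ref{hyp:S(4)-to-S(3):real:prop}. For several of these one could alternatively use geometric gluings (\textsf{GG}), assembling $d$ copies of a mirrored hyperbolic triangle $T(p,q,r)$ as in Proposition~\ref{GG:prop}, but the \textsf{DE} pictures are the most economical here.

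Then, for the genuinely high-degree items --- namely \texttt{165} ($d=18$) and \texttt{167} ($d=28$) --- I would invoke the \textsf{MR} technique directly: fix $\sigma_1$ of the prescribed cycle type, let $\sigma_2$ range over its conjugacy class, and check transitivity of $\langle\sigma_1,\sigma_2\rangle$ together with the cycle type of $\sigma_1\sigma_2$. Monge's C++ code~\cite{Monge}, using the $p$-core Young-diagram computation that is very simple for these particular cycle types, returns suitable triples within seconds; as a consistency check the same code re-proves realizability of all the lower-degree items as well, and its outputs (explicit permutation pairs) should be recorded so the realizations are independently verifiable.

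The main obstacle I anticipate is presentational rather than conceptual: drawing legible dessins on the torus becomes unwieldy already around degree $18$, so for \texttt{165} and \texttt{167} the actual content rests on the \textsf{MR}/computational route, and care must be taken to present the resulting monodromy data explicitly (or the relevant dessins, if one prefers to produce them by hand) so that the argument is self-contained. Everything else is routine once the \textsf{DE} pictures for the small degrees are drawn.
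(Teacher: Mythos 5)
Your proposal matches the paper's proof in essence: the paper likewise establishes realizability by exhibiting an explicit dessin d'enfant for each of the $17$ candidates (there with white vertices on $\Pi_1$, black on $\Pi_2$ and regions on $\Pi_3$ --- the choice of which partition labels regions is immaterial). The only deviation is that the paper also draws dessins for the two largest realizable items \texttt{165} and \texttt{167} instead of delegating them to the \textsf{MR}/Monge computation; your computational route is a legitimate but less explicit alternative, acceptable provided the monodromy permutations are recorded as you indicate.
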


\begin{proof}
Again we simply exhibit one dessin d'enfant for each relevant candidate, which is done in
Figures~\ref{torus:real:fig:1} and~\ref{torus:real:fig:2}.
\begin{figure}
\centering
    \includegraphics[scale=.4]{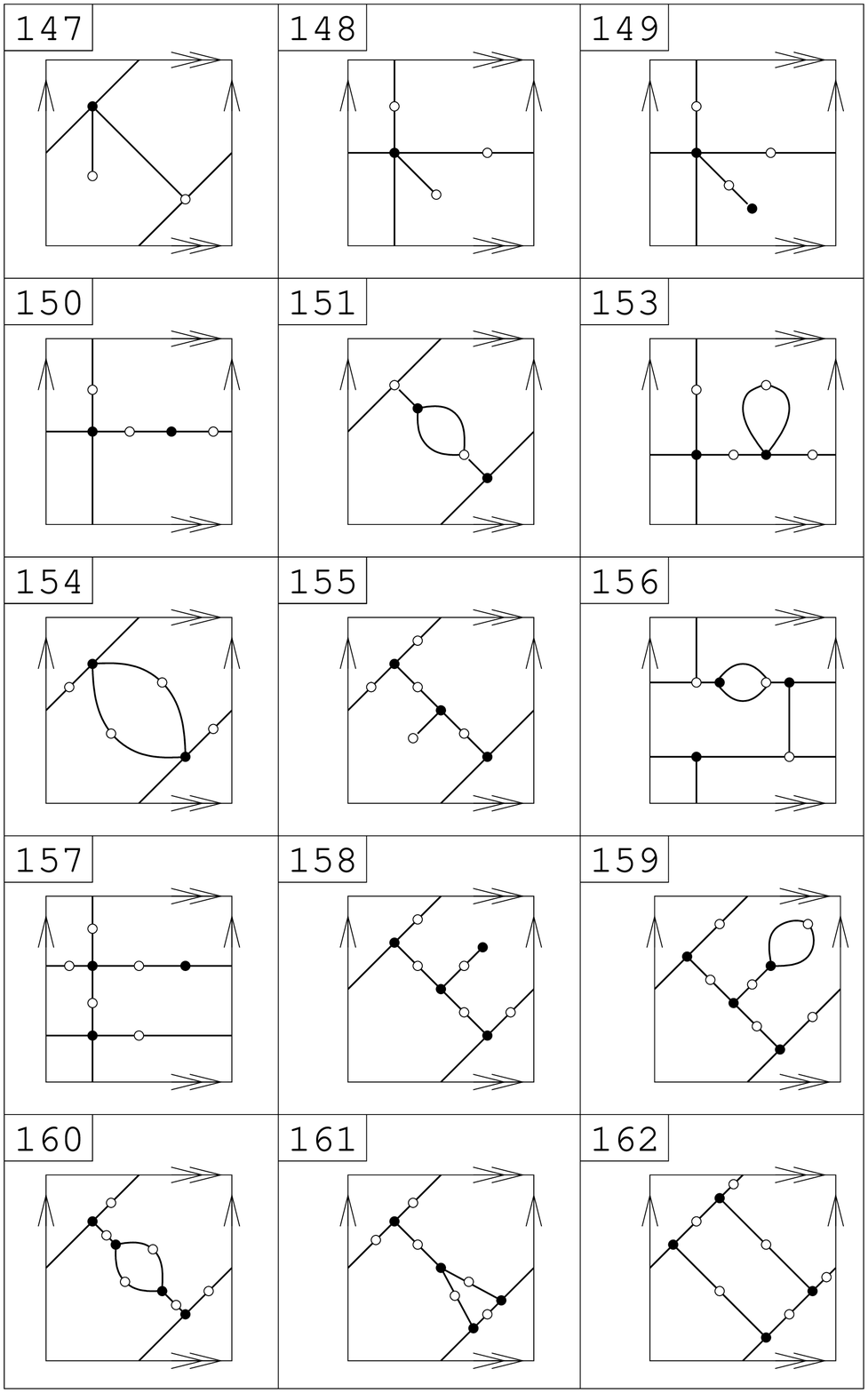}
\mycap{Dessins d'enfant realizing the
candidate covers with degree up to $12$.\label{torus:real:fig:1}}
\end{figure}
\begin{figure}
\centering
    \includegraphics[scale=.4]{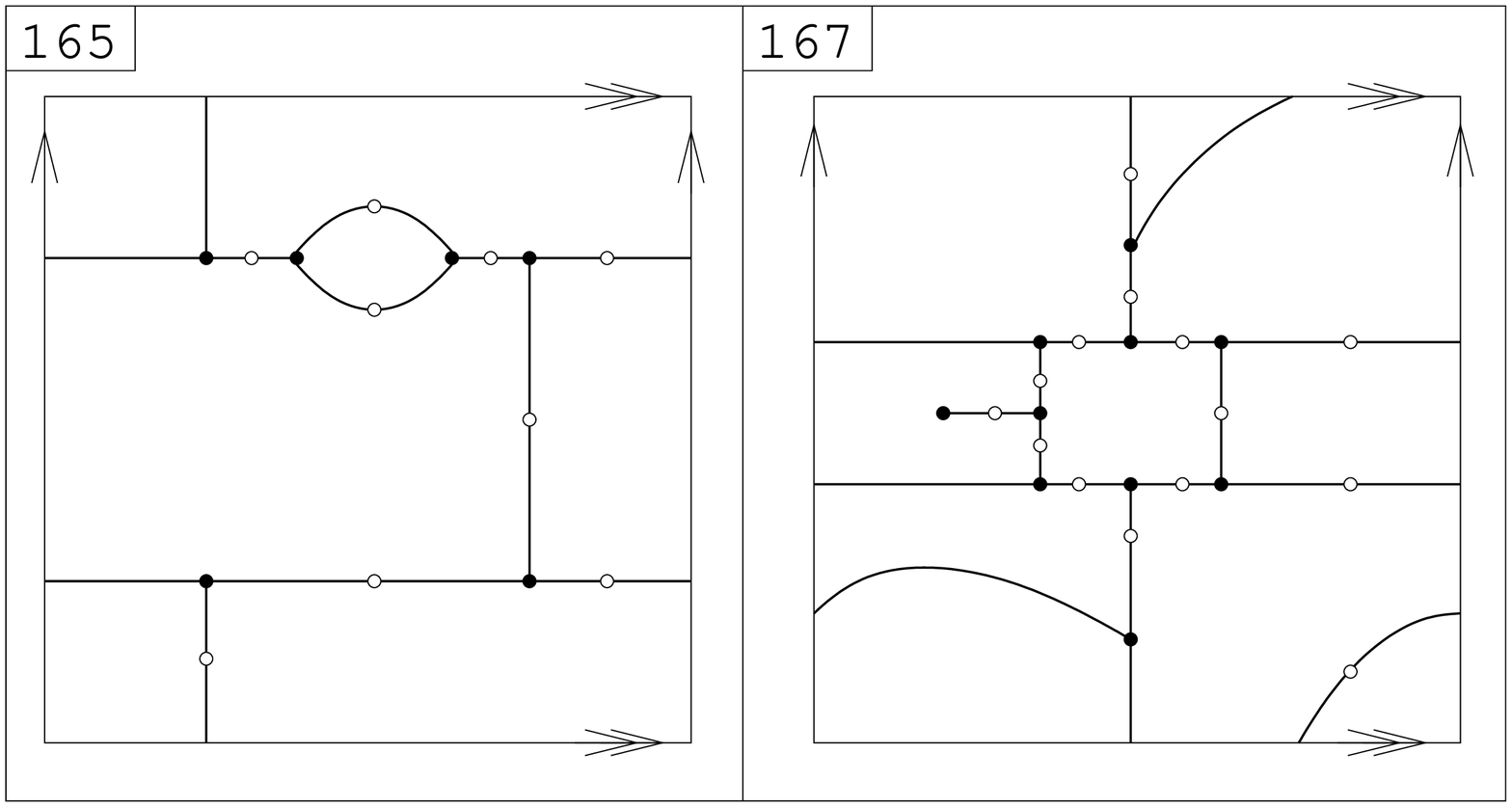}
\mycap{Dessins d'enfant realizing the
candidate covers with degree greater than $12$\label{torus:real:fig:2}}
\end{figure}
In all our dessins
we associate white vertices
to the entries of partition $\Pi_1$ and black vertices to those in $\Pi_2$,
so the regions of the complement correspond to the entries of $\Pi_3$.
\end{proof}

To conclude the proof of Theorem~\ref{hyp:T(1)-to-S(3):summary:thm}
one would now need to show exceptionality of candidates
\texttt{152}, \texttt{163}, \texttt{164}, \texttt{166} and \texttt{168}.
We have actually done this using the \textsf{MR} criterion and the code~\cite{Monge},
as explained in Section~\ref{techniques:sec}. We will however show here
the same fact in a geometric fashion for the smallest and for the largest candidates:

\begin{prop}\label{T(1):GG:prop}
Candidates \emph{\texttt{152}} and \emph{\texttt{168}} can be shown to be exceptional
using the \emph{\textsf{GG}} technique.
\end{prop}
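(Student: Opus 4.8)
The plan is to translate both candidates into the orbifold picture and then invoke the \textsf{GG} criterion of Proposition~\ref{GG:prop}, imitating the block‑assembling arguments already used for candidates \texttt{23}, \texttt{131} and \texttt{146}.

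For \texttt{152} the associated candidate orbifold cover is $T(2)\dotsto^{6:1}S(3,3,4)$ with covering instruction $(4,2)\dotsto4$; hence the two cone points of order $3$ of $S(3,3,4)$ are covered by non-singular points only (each of their preimages has local degree $3$, equal to the cone order), while the cone point $C$ of order $4$ is covered by one non-singular point $P_1$ of local degree $4$ and one cone point $P_2$ of order $2$ and local degree $2$. First I would take $D$ to be the double of the hyperbolic triangle $T(3,3,4)$ along the edge joining its two order-$3$ vertices, so that $D$ is a hyperbolic quadrilateral carrying one vertex of angle $\tfrac{2\pi}{3}$ over each order-$3$ point and two vertices of angle $\tfrac{\pi}{4}$ over $C$. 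Because each order-$3$ point is covered trivially, exactly three copies of $D$ must be fitted around each of its two preimages, and the way $C$ is covered forces exactly four copies of $D$ around $P_1$ and exactly two around $P_2$, joined along their order-$4$ vertices. This rigidity reduces the problem to a very small explicit case analysis: glue the two copies of $D$ around $P_2$ into a block carrying the order-$2$ cone point, glue the four copies around $P_1$, and then determine in how many ways the resulting pieces can be assembled so that the two triples over each order-$3$ point close up. I expect this to show that no admissible assembly yields a connected surface realizing the cover, whence \texttt{152} is exceptional.

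For \texttt{168} the associated candidate orbifold cover is $T(7)\dotsto^{36:1}S(2,3,7)$ with instruction $(7,7,7,7,7,1)\dotsto7$; so the cone points of orders $2$ and $3$ are covered by non-singular points only, while over the cone point $C$ of order $7$ there are five non-singular preimages of local degree $7$ together with one cone point of order $7$ of local degree $1$. As in the proof of Proposition~\ref{GG:146:prop}, I would ``forget'' the order-$2$ point by taking $D$ to be the double of $T(2,3,7)$ along the edge joining its order-$2$ and order-$7$ vertices: this is a hyperbolic triangle with angles $\tfrac{\pi}{3},\tfrac{2\pi}{7},\tfrac{\pi}{3}$ in which the order-$2$ point is the midpoint of one edge and plays no further role. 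The way $C$ is covered pre-assembles the $36$ copies of $D$ into five heptagonal blocks $h_1,\dots,h_5$ --- each got by fanning seven triangles around a non-singular preimage of $C$, hence having seven free edges and seven corners of angle $\tfrac{2\pi}{3}$ --- together with one single triangle $o$ carrying the order-$7$ cone point at one of its vertices. Since $o$ is now the only block still possessing free edges incident to a preimage of $C$, those two edges of $o$ must be glued to one another, folding $o$ into a monogon $m$; it then remains to glue $h_1,\dots,h_5$ and $m$ along their free edges so that the $\tfrac{2\pi}{3}$-corners match up in triples and the resulting closed surface is the torus.

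The main obstacle, in both cases, is the combinatorial bookkeeping of how the pre-assembled blocks can be put together. I would handle it exactly as in the argument given above for \texttt{131}: record the angles at the vertices of the current block or blocks, use the fact that every glued vertex must receive total angle $2\pi$ to force the adjacent gluings, and follow how the blocks coalesce. The claim to be proved is that the forced sequence of moves always terminates at a configuration that cannot be completed to the required connected surface --- for \texttt{168}, a block that is a triangle whose vertex angles cannot all be completed to $2\pi$, in complete analogy with the ``$6\alpha,6\alpha,24\alpha$'' and ``$6\alpha,12\alpha,18\alpha$'' triangles appearing in the proof for \texttt{131}. Carrying out this finite analysis in full, deferring the most routine steps to~\cite{hyp1:proofs}, then establishes the exceptionality of \texttt{152} and \texttt{168}.
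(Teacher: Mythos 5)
Your reduction of both candidates to the \textsf{GG} framework is correct, and for \texttt{168} your pre-assembly of the $36$ doubled triangles into five heptagons with seven corners of angle $\frac{2\pi}3$ each, plus one monogon, is exactly the decomposition the paper's proof starts from. The gap is that you stop precisely where the proof has to begin: in both cases the conclusion is only ``expected''. The entire content of the argument is the finite gluing analysis itself. For \texttt{168} one must follow a long chain of forced identifications --- gluing the monogon turns one heptagon into a square with angles $2\alpha,2\alpha,2\alpha,4\alpha$ (with $\alpha=\frac\pi3$), which cannot be glued to itself, so a second heptagon becomes a $7$-gon with angles $4,2,4,2,2,2,2$, then a third becomes a decagon, a fourth an $11$-gon, and finally everything coalesces into one of four possible $14$- or $16$-gons --- and only then can one check that no closing of the final block yields a torus with total angle $6\alpha$ at every vertex. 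Nothing guarantees a priori that the forced moves end badly: for the realizable candidate \texttt{146} a completely parallel assembly of heptagons and monogon-type blocks succeeds (Proposition~\ref{GG:146:prop}), so exceptionality is decided only by actually running the process to its end.

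For \texttt{152} there are two further points. First, the ``rigidity'' you invoke is not forced: your quadrilateral $D$ has two \emph{opposite} corners of angle $\frac\pi4$ lying over the order-$4$ point, and nothing prevents a single copy of $D$ from contributing one of these corners to $P_1$ and the other to $P_2$; hence the six copies of $D$ need not split into a block of four around $P_1$ and a block of two around $P_2$, and an enumeration based on that assumption could miss configurations, which is fatal in an exceptionality argument. Second, note that the obstruction here is metric rather than combinatorial: in the paper's analysis the forced patterns \emph{do} close up into connected branched covers (combinatorially they realize candidate \texttt{151} and the Euclidean cover $T\dotsto^{6:1}S(3,3,3)$); what fails is that the angles at the preimages of the order-$4$ point come out as $\frac{5\pi}2$, respectively $\frac{3\pi}2$ and $\frac{3\pi}2$, instead of the required $2\pi$ and $\pi$. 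So the decisive step is the explicit computation of those cone angles in the finitely many possible patterns; until that computation (and the forced-gluing chain for \texttt{168}) is actually carried out, the exceptionality of neither candidate is established.
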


\begin{proof}
For \texttt{152} we should realize $T(2)\dotsto^{6:1} S(3,3,4)$ with (of course) $2\to 4$.
Taking the cone orders $3,3,4$ at $A,B,C$ we should then assemble $12$ copies
of a triangle with vertices $A,B,C$ getting, after the gluing, vertices $A_1,A_2,B_1,B_2$
each adjacent to $6$ triangles. Taking into account this property
for $A_1,A_2,B_1$ we get the pattern first shown in Figure~\ref{152:fig}, and taking
into account also $B_2$ we get the two possible patterns also shown in the figure.
\begin{figure}
\centering
    \includegraphics[scale=.7]{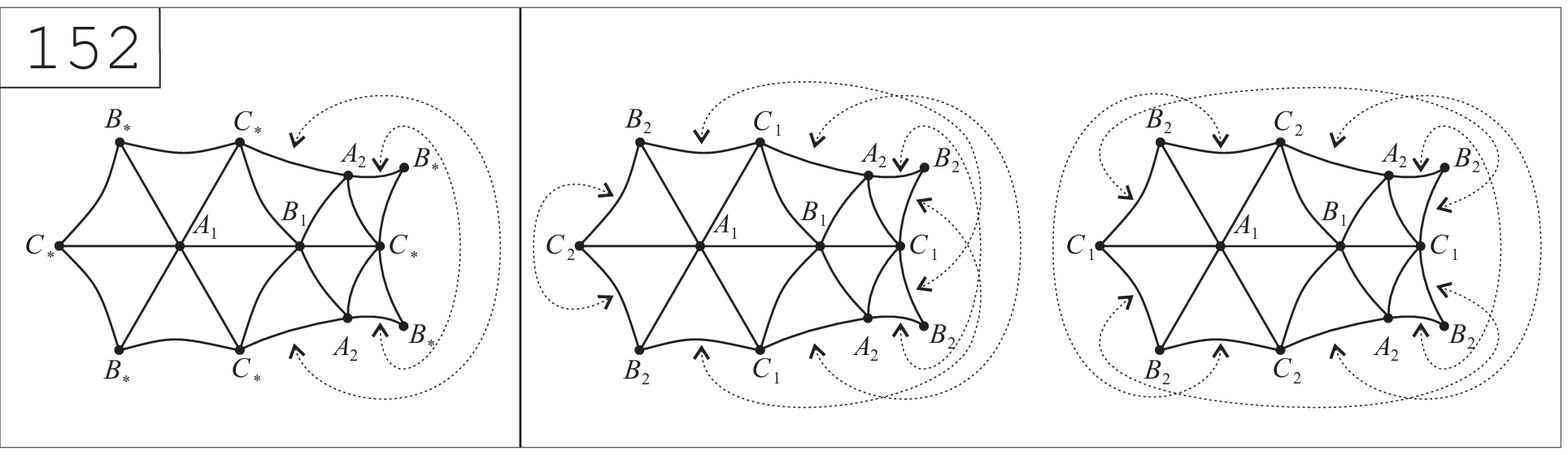}
\mycap{Exceptionality of \texttt{152} via \textsf{GG}.\label{152:fig}}
\end{figure}
These patterns do not give orbifolds, because
in the first one the cone angle at $C_1$ is $\frac{5\pi}2$, and
in the second one the cone angles at $C_1,C_2$ are $\frac{3\pi}2$.
Note however that combinatorially they give realization of candidate \texttt{151}
and of the Euclidean $T\dotsto^{6:1}S(3,3,3)$.

Turning to \texttt{168}, we confine ourselves to a sketch, because the complete argument is long.
We should realize $T(7)$ gluing $72$ copies of $T(2,3,7)$. Using the fact that $2$ is covered
by smooth points and $7$ is covered by $5$ smooth points and one of order $7$,
we see that the triangles must get assembled into $5$ heptagons and one monogon, all with angles
$2\alpha$, with $\alpha=\frac\pi3$. For short,
let us write $n$ instead of $n\alpha$.
We should now glue these blocks getting angle $6$
at all vertices. In particular when after a partial gluing an angle of $6$ is reached,
an extra gluing of edges is forced.
Gluing the monogon to one heptagon we see the latter gets replaced by square with
angles $2,2,2,4$. This square cannot glue to itself,
so another heptagon gets replaced by one with angles $4,2,4,2,2,2,2$
(in this order). Checking various possibilities one sees that an edge incident to an angle $4$ of this
heptagon cannot be glued to an edge of the same heptagon, so, from the 3 original heptagons left,
one gets replaced by a decagon with angles $4,4,2,2,2,4,2,2,2,2$.
The edge of the decagon with angle $4$ at both ends cannot be glued to another edge of the decagon,
so one of the two remaining original heptagons gets replaced by an $11$-gon with angles
$2,4,2,2,2,4,2,2,2,4,2$.
At least one edge of the heptagon gets glued to an edge of this $11$-gon, and there are $6$ possibilities up to symmetry.
Looking at them we reduce to a single block, that is either a $14$-gon with angles
\begin{center}$2,2,4,2,2,4,2,2,4,2,2,4,2,2\quad\text{or}\quad2,2,4,2,2,4,2,2,4,2,2,2,4,2$\end{center}
or a $16$-gon with angles
\begin{center}$4,2,2,2,2,2,4,4,2,2,2,4,2,2,4,2\quad\text{or}\quad4,2,2,2,2,2,4,4,2,2,4,2,2,2,4,2$.\end{center}
And with some patience one sees that from one such block it is impossible to get
a torus imposing angle $6$ at all vertices.
\end{proof}

\vspace{.35cm}

\noindent
Scientific Visualization Unit\\
Institute of Clinical Physiology - CNR\\
Via G. Moruzzi 1\\
56124 Pisa, Italy\\
mantonietta.pascali@gmail.com

\vspace{.35cm}

\noindent
Dipartimento di Matematica Applicata\\
Universit\`a di Pisa\\
Via Filippo Buonarroti, 1C\\
56127 PISA, Italy\\
petronio@dm.unipi.it

\end{document}